\newtheorem{thm}{Theorem}[section]
\newtheorem{cor}[thm]{Corollary}
\newtheorem{lem}[thm]{Lemma}
\newtheorem{exm}[thm]{Example}
\newtheorem{prop}[thm]{Proposition}
\theoremstyle{definition}
\newtheorem{defn}[thm]{Definition}
\theoremstyle{remark}
\newtheorem{rem}[thm]{\bf Remark}
\numberwithin{equation}{section}
\begin{document}
\title[Skew group categories, Cartan matrices and folding]{Skew group categories, algebras associated to Cartan matrices and folding of root lattices}
\author[Xiao-Wu Chen, Ren Wang] {Xiao-Wu Chen, Ren Wang$^*$}

\thanks{$^*$ The corresponding author}
\thanks{}
\subjclass[2010]{16G20, 16S35, 16D90, 17B22}
\date{\today}

\thanks{E-mail: xwchen$\symbol{64}$mail.ustc.edu.cn; renw$\symbol{64}$mail.ustc.edu.cn}
\keywords{folding, Cartan matrix,  graph with automorphism, free EI category, skew group category}%

\maketitle

\dedicatory{}%
\commby{}%

\begin{abstract}
For a finite group action on a finite EI quiver, we construct its `orbifold' quotient EI quiver. The free EI category associated to the quotient EI quiver is equivalent to the skew group category with respect to the given group action. Specializing the result to a finite group action on a finite acyclic quiver, we prove that, under reasonable conditions, the skew group category of the path category is equivalent to a finite EI category of Cartan type. If the ground field is of characteristic $p$ and the acting group is a cyclic $p$-group, we prove that the skew group algebra of the path algebra is Morita equivalent to the algebra associated to a Cartan matrix, defined in [C. Geiss, B. Leclerc, and J. Schr\"{o}er, {\em Quivers with relations for symmetrizable Cartan matrices I: Foundations},  Invent. Math. {\bf 209} (2017), 61--158]. We apply the Morita equivalence to construct a categorification of the folding projection between the root lattices with respect to a graph automorphism. In the Dynkin cases,  the restriction of the categorification to indecomposable modules corresponds to the folding of positive roots.
 \end{abstract}

\section{Introduction}

\subsection{The background}

The folding of root lattices is classic \cite{St} and  plays a  significant role in Lie theory when getting from the simply-laced cases to the non-simply-laced cases. The starting point is the fact that a symmetrizable generalized Cartan matrix $C$ is determined by  a finite graph $\Gamma$  with an admissible automorphism $\sigma$ \cite{St, Lus}. There is a surjective homomorphism, called the \emph{folding projection},
$$\boldsymbol{f}\colon \mathbb{Z}\Gamma_0\longrightarrow \mathbb{Z}(\Gamma_0/{\langle \sigma \rangle})$$
from the root lattice of $\Gamma$ to that of $C$, which preserves simple roots; see \cite[Section~10.3]{Spr}. Here, $\Gamma_0$ denotes the set of vertices in $\Gamma$, and the orbit set $\Gamma_0/{\langle \sigma \rangle}$ indexes both the rows and columns of $C$, so that we identify $\mathbb{Z}(\Gamma_0/{\langle \sigma \rangle})$ with the root lattice of $C$. It is proved by \cite[Proposition~15]{Hu04} that the folding projection restricts to a surjective map
$$\boldsymbol{f}\colon \Phi(\Gamma)\longrightarrow \Phi(C)$$
between the root systems \cite{Kac}, known as the \emph{folding} of roots.

Let $\mathbb{K}$ be a field, and  $\Delta$ be a finite acyclic quiver such that its underlying graph is $\Gamma$. The path algebra $\mathbb{K}\Delta$ is finite dimensional and hereditary. It is well known that the category  of finite dimensional $\mathbb{K}\Delta$-modules, denoted by $\mathbb{K}\Delta\mbox{-mod}$,  categorifies the root lattice $\mathbb{Z}\Gamma_0$ in the following manner \cite{Gab}: the dimension vector ${\underline{\rm dim}}(M)$ of any $\mathbb{K}\Delta$-module $M$ belongs to $\mathbb{Z}\Gamma_0$, where simple $\mathbb{K}\Delta$-modules correspond to simple roots. Gabriel's theorem  \cite[1.2~Satz]{Gab}, one of the foundations in modern representation theory of algebras, states that if $\Delta$ is of Dynkin type, then indecomposable $\mathbb{K}\Delta$-modules correspond bijectively to positive roots in $\Phi(\Gamma)$.

Associated to a symmetrizable generalized Cartan matrix $C$, a finite dimensional $1$-Gorenstein algebra $H$ is defined in \cite{GLS}. The category of finite dimensional $\tau$-locally free $H$-modules, denoted by $H\mbox{-{\rm mod}}^{\tau \mbox{-{\rm lf}}}$,  categorifies  the root lattice $\mathbb{Z}(\Gamma_0/{\langle \sigma \rangle})$ in a similar manner: the rank vector $\underline{\rm rank}(X)$ of any $\tau$-locally free $H$-module $X$ belongs to $\mathbb{Z}(\Gamma_0/{\langle \sigma \rangle})$, where generalized simple $H$-modules correspond to simple roots.  \cite[Theorem~1.3]{GLS}, a remarkable analogue of Gabriel's theorem,  states that if $C$ is of Dynkin type, then indecomposable $\tau$-locally free $H$-modules correspond bijectively to positive roots  in $\Phi(C)$.

We mention that the categorification in \cite{GLS} works over an arbitrary ground field. In particular, it works for algebraically closed fields, and then certain geometric consideration for $\mathbb{K}\Delta$ carries over to $H$; see \cite{Gei}.  The traditional categorification of  $\mathbb{Z}(\Gamma_0/{\langle \sigma \rangle})$ for a non-symmetric Cartan matrix  uses  species \cite{DR}, where the ground field has to  be chosen suitably and can not be algebraically closed.

In view of the above work, the following question is natural and fundamental: how to categorify the folding projection $\boldsymbol{f}$ between the root lattices?  More precisely, is there an additive functor $\Theta\colon \mathbb{K}\Delta\mbox{-mod}\rightarrow H\mbox{-mod}^{\tau\mbox{\rm -lf}}$ making the following diagram
\[\xymatrix{
\mathbb{K}\Delta\mbox{-{\rm mod}} \ar[d]_-{\underline{\rm dim}} \ar@{.>}[rr]^-{\Theta} && H\mbox{-{\rm mod}}^{\tau \mbox{-{\rm lf}}} \ar[d]^-{\underline{\rm rank}}\\
\mathbb{Z}\Gamma_0 \ar[rr]^-{\boldsymbol{f}} && \mathbb{Z}(\Gamma_0/{\langle \sigma\rangle})
}
\]
 commute? Such a functor $\Theta$ might be called a \emph{categorification} of $\boldsymbol{f}$.

We will construct such a categorification under the assumptions that the characteristic  ${\rm char}(\mathbb{K})=p$ of the field is positive and that the automorphism $\sigma$ is of order $p^a$ for some $a\geq 1$. Moreover, if $\Delta$ is of Dynkin type, $\Theta$ preserves indecomposable modules and categorifies  the folding of positive roots.

For our purpose, it is very natural to require that $\sigma$ preserves the orientation, that is, it acts on  $\Delta$ by quiver automorphisms. We will work in a slightly more general setting, namely, finite group actions on finite free EI categories.

Recall that a finite category is  EI provided that each endomorphism is invertible; in particular, the endomorphism monoid of each object is a finite group. For example,  the path category of a finite acyclic quiver is EI. The study of finite EI categories goes back to \cite{Lvc},  and is used to reformulate and extend Alperin's weight conjecture \cite{We08, Lin14}. We mention that EI categories are very similar to graphs of groups in the sense of Bass-Serre \cite{Bass, Serre}.

As an EI analogue of a path category, the notion of a finite free EI category is introduced in \cite{L2011}. We are mostly interested in EI categories of Cartan type \cite{CW},  which are certain finite free EI categories associated to symmetrizable generalized Cartan matrices. The construction of  the categorification $\Theta$ relies  on the isomorphism \cite{CW} between the category algebra of an EI category of Cartan type and the algebra $H$  in \cite{GLS}.

\subsection{The main results}
Let $\mathcal{C}$ be a finite category and $G$ be a finite group. Assume that $G$ acts on $\mathcal{C}$ by categorical automorphisms. As a very special case of the Grothendieck construction, we have the \emph{skew group category} $\mathcal{C}\rtimes G$. The terminology is justified by the following fact: the category algebra $\mathbb{K}(\mathcal{C}\rtimes G)$ is isomorphic to $\mathbb{K}\mathcal{C}\#G$, the skew group algebra of the category algebra $\mathbb{K}\mathcal{C}$ with respect to the induced $G$-action.

Following \cite[Definition~2.1]{L2011}, a \emph{finite EI quiver} $(Q, U)$ consists of a finite acyclic quiver $Q$ and an assignment $U$ on $Q$. The assignment $U$ assigns to each vertex $i$ of $Q$ a finite group $U(i)$, and to each arrow $\alpha$, a finite  $(U(t\alpha), U(s\alpha))$-biset $U(\alpha)$. Here, $t\alpha$ and $s\alpha$ denote the terminating vertex and starting vertex of $\alpha$, respectively.

In a natural manner, each finite EI quiver $(Q, U)$ gives rise to a finite EI category $\mathcal{C}(Q, U)$ such that the objects of $\mathcal{C}(Q, U)$ are precisely the vertices of $Q$, the automorphism group of $i$ coincides with $U(i)$, and that elements of $U(\alpha)$ correspond to unfactorizable morphisms. By \cite[Definition~2.2 and Proposition~2.8]{L2011}, a finite EI category $\mathcal{C}$ is said to be \emph{free}, provided that it is equivalent to $\mathcal{C}(Q, U)$ for some finite EI quiver $(Q, U)$.

Let $G$ be a finite group acting on $(Q, U)$ by EI quiver automorphisms. Then $G$ acts naturally on the EI category $\mathcal{C}(Q, U)$. We form the skew group category $\mathcal{C}(Q, U)\rtimes G$.  Inspired by \cite[Section~3]{Bass}, we construct  the `orbifold' \emph{quotient} EI quiver $(\overline{Q},\overline{U})$. Here, $\overline{Q}$ is the quotient quiver $Q$ by $G$, and the construction of the assignment $\overline{U}$ is quite involved. We mention that for each vertex $\boldsymbol{i}$ of $\overline{Q}$, the finite group $\overline{U}(\boldsymbol{i})$ is a semi-direct product of $U(i)$ with the stabilizer $G_i$ for some vertex $i$ of $Q$. For details, we refer to Subsection~\ref{subsec:quo}.

The first main result identifies  the category associated to the quotient EI quiver with the skew group category, and thus  justifies the `orbifold' quotient construction.
\vskip 5pt

\noindent {\bf Theorem A.}\; \emph{Let $(Q, U)$ be a finite EI quiver with a $G$-action, and $(\overline{Q},\overline{U})$ be its quotient EI quiver. Then there is an equivalence of categories
$$\mathcal{C}(\overline{Q},\overline{U})\simeq \mathcal{C}(Q, U)\rtimes G.$$
}

We mention that Theorem~A (= Theorem~\ref{thm:equiv-cat}) might be viewed as a combinatorial analogue to the well-known fact: the skew group algebra of a commutative algebra with respect to a finite group action is closely related to the corresponding quotient singularity; for example, see \cite{Wem}.

 Let $\Delta$ be a finite  acyclic quiver. Denote by $\mathcal{P}_\Delta$ its path category. We view $\Delta$ as a finite EI quiver $(\Delta,U_{\rm tr})$ with trivial assignment $U_{\rm tr}$. Then we have
 $$\mathcal{C}(\Delta, U_{\rm tr})=\mathcal{P}_\Delta.$$
 Assume that $G$ acts on $\Delta$ by quiver automorphisms. It induces a $G$-action on $(\Delta,U_{\rm tr})$. Denote by  $(\overline{\Delta},\overline{U}_{\rm tr})$ the corresponding quotient EI quiver, where $\overline{\Delta}$ is the quotient quiver $\Delta$ by $G$. Theorem~A implies that there is an equivalence of categories
 \begin{align}\label{equ:equi-intro}
 \mathcal{C}(\overline{\Delta},\overline{U}_{\rm tr}) \simeq \mathcal{P}_{\Delta}\rtimes G.
 \end{align}

By a \emph{Cartan triple} $(C,D,\Omega)$, we mean that $C$ is a symmetrizable generalized  Cartan matrix, $D$  is its symmetrizer and that $\Omega$ is an acyclic orientation of $C$. Following \cite[Section~1.4]{GLS}, we denote by $H(C, D, \Omega)$ the $1$-Gorenstein $\mathbb{K}$-algebra associated to any Cartan triple $(C, D, \Omega)$. Similarly, we associate a finite free EI category $\mathcal{C}(C,D,\Omega)$, called an \emph{EI category of Cartan type}, to any Cartan triple $(C, D, \Omega)$; see \cite[Definition~4.1]{CW}.

 As is well known, there is a Cartan triple $(C,D,\Omega)$  associated to the above $G$-action on $\Delta$ such that both the rows and columns of $C$ and $D$ are indexed by the orbit set $\overline{\Delta}_0=\Delta_0/G$. Here, $\Delta_0$ denotes the set of vertices in $\Delta$. Moreover, for each $G$-orbit $\boldsymbol{i}$ of vertices, the corresponding diagonal entry of $D$ is $|G|/|\boldsymbol{i}|$; the corresponding off-diagonal entry of $C$ is $$c_{\boldsymbol{i},\boldsymbol{j}}=-\frac{N_{\boldsymbol{i},\boldsymbol{j}}}{|\boldsymbol{j}|},$$
 where $|\boldsymbol{i}|$ denotes the cardinality of the $G$-orbit $\boldsymbol{i}$ and $N_{\boldsymbol{i},\boldsymbol{j}}$ denotes the number of arrows in $\Delta$ between the $G$-orbit $\boldsymbol{i}$ and $G$-orbit $\boldsymbol{j}$. The orientation of $\Omega$ is induced from the one of $\Delta$.

The second main theorem establishes an equivalence between the skew group category and the EI category of Cartan type. Based on \cite{CW}, we obtain a Morita equivalence between the skew group algebra $\mathbb{K}\Delta\#G$ and $H(C, D, \Omega)$.

\vskip 5pt

\noindent {\bf Theorem B.}\;
\emph{Let $\Delta$ be a finite acyclic quiver with a $G$-action that satisfies $(\dagger 1)\mbox{-}(\dagger 3)$ in Subsection~\ref{subset:qC}. Assume that $(C,D,\Omega)$ is the associated Cartan triple. Then we have the following statements.
\begin{enumerate}
	\item There is an equivalence of categories
	$$\mathcal{P}_{\Delta}\rtimes G\simeq \mathcal{C}(C,D,\Omega).$$
	\item Assume that ${\rm char}(\mathbb{K})=p>0$ and that $G$ is a $p$-group. Then the skew group algebras $\mathbb{K}\Delta\#G$ and $H(C,D,\Omega)$ are Morita equivalent.
\end{enumerate}
}
\vskip3pt

The above technical conditions $(\dagger 1)\mbox{-}(\dagger 3)$ are easily satisfied when $G$ is cyclic. On the other hand, examples where they do hold seem to be ubiquitous; see Example~\ref{exm:ubi}. In view of (\ref{equ:equi-intro}), the core of the proof of  Theorem~B is to describe the assignment $\overline{U}_{\rm tr}$ in the quotient EI quiver. We refer to Theorem~\ref{thm:quiver} for more details.

The equivalence  and the Morita equivalence in Theorem~B indicate that both EI categories of Cartan type \cite{CW} and the algebra $H(C,D, \Omega)$ \cite{GLS} arise naturally in the representation theory of quivers with automorphisms \cite{Lus, Hu04}.

The Morita equivalence in Theorem~B(2) yields an equivalence between module categories
$$\Psi \colon \mathbb{K}\Delta\#G\text{-mod}\overset{\sim}{\longrightarrow} H(C,D,\Omega)\text{-{\rm mod}}.$$
We have the obvious \emph{induction functor}
$$-\#G\colon \mathbb{K}\Delta\text{-mod}\longrightarrow \mathbb{K}\Delta\#G\text{-mod}, \quad M\mapsto M\#G.$$

For $\tau$-locally free modules over $H=H(C, D, \Omega)$, we refer to  \cite[Definition~1.1 and Section~11]{GLS}. Denote by $H\mbox{-mod}^{\tau\mbox{-lf}}$ the full subcategory of $H\mbox{-mod}$ consisting of $\tau$-locally free modules.  In contrast to \cite{GLS}, we  do not require $\tau$-locally free $H$-modules to be indecomposable.

Recall that  $\mathbb{Z}\Delta_0$ and $\mathbb{Z}(\Delta_0/G)$ denote the root lattices of $\Delta$ and $C$, respectively. The sets of positive roots are denoted by $\Phi^+(\Delta)$ and $\Phi^+(C)$, respectively.

The third main result shows that the composite functor $\Psi\circ (-\#G)$ is the pursued categorification of the folding projection $\boldsymbol{f}$; see  Theorem~\ref{thm:categorify} and Proposition~\ref{prop:Dynkin}.

\vskip 5pt

\noindent {\bf Theorem~C.}\;  \emph{Assume that ${\rm char}(\mathbb{K})=p>0$ and that $G$ is a cyclic $p$-group. Assume that $G$ acts on a finite acyclic quiver $\Delta$ such that $G_{\alpha}=G_{s(\alpha)}\cap G_{t(\alpha)}$ for each arrow $\alpha$ in $\Delta$. Assume that $(C,D,\Omega)$ is its associated Cartan triple. Then we have the following commutative diagram.
\begin{equation*}
\xymatrix@C=1.8cm{
	\mathbb{K}\Delta\text{-{\rm mod}}\ar[r]^-{\Psi\circ (-\#G)\; \; \;} \ar[d]_{\underline{\rm dim}} &  H(C,D,\Omega)\text{-{\rm mod}}^{\tau\text{-{\rm lf}}} \ar[d]^{\underline{\rm rank}} \\
	\mathbb{Z}\Delta_0\ar[r]^{\boldsymbol{f}} & \mathbb{Z}(\Delta_0/G)
}
\end{equation*}}

\emph{Assume further  that $\Delta$ is of Dynkin type. Then the above commutative diagram restricts to the following one.
\begin{equation}\label{diag:catf}
\xymatrix@C=1.8cm{
	\mathbb{K}\Delta\text{-{\rm ind}}\ar[r]^-{\Psi\circ (-\#G)\; \; \;} \ar[d]_{\underline{\rm dim}} &  H(C,D,\Omega)\text{-{\rm ind}}^{\tau\text{-{\rm lf}}} \ar[d]^{\underline{\rm rank}} \\
	\Phi^+(\Delta)\ar[r]^{\boldsymbol{f}} & \Phi^+(C)
}
\end{equation}
}

Here, $G_\alpha$, $G_{s(\alpha)}$ and $G_{t(\alpha)}$ denote the stabilizers of an arrow $\alpha$, its starting vertex $s(\alpha)$ and terminating vertex $t(\alpha)$, respectively. The natural condition $G_{\alpha}=G_{s(\alpha)}\cap G_{t(\alpha)}$ implies that  the technical conditions $(\dagger 1)\mbox{-}(\dagger 3)$ in Theorem~B hold.

In (\ref{diag:catf}), we denote by $\mathbb{K}\Delta\mbox{-ind}$ a complete set of representatives of indecomposable $\mathbb{K}\Delta$-modules. Similarly, $H(C,D,\Omega)\text{-{\rm ind}}^{\tau\text{-{\rm lf}}}$ is a complete set of representatives of indecomposable $\tau$-locally free $H(C, D, \Omega)$-modules.

 In the Dynkin cases, by \cite[1.2~Satz]{Gab} and  \cite[Theorem~1.3]{GLS}, the vertical arrows in (\ref{diag:catf}) are both bijections. Since $\boldsymbol{f}\colon \Phi^+(\Delta)\rightarrow \Phi^+(C)$ is surjective, we infer that up to the equivalence $\Psi$, every $\tau$-locally free $H(C, D, \Omega)$-module is induced from $\mathbb{K}\Delta\mbox{-ind}$. This yields a new interpretation of those $H(C, D, \Omega)$-modules \cite{GLS} that categorify the root system $\Phi^+(C)$.

In view of \cite[Section~14.1]{Lus} and \cite{Hu04}, it has been expected that skew group algebras play a role in categorifying the root lattice for symmetrizable generalized  Cartan matrices. We observe that in \cite[Section~4]{Hu04} the characteristic of the ground field is assumed to be coprime to the order of the acting group. In contrast, the feature of Theorem~C is the assumptions that the ground field $\mathbb{K}$ is  of characteristic $p$ and that the order of the acting group $G$ is a $p$-power.

\subsection{The structure} The paper is structured as follows. In Section~2, we prove that for a finite group action on a finite category, the skew group category is EI if and only if so is the given category; see Proposition~\ref{prop:skew-EI}. In Section~3, we study the unique factorization property of morphisms and free EI categories. We prove that for a finite group action on a finite category, the skew group category is free EI if and only if so is the given category; see Proposition~\ref{prop:Free}. In Section~4, we recall finite  EI quivers introduced in \cite{L2011}, and prove a universal property of the free EI category associated to a finite EI quiver; see Proposition~\ref{prop:universal}.

For a finite group action on a finite EI quiver, we construct its `orbifold' quotient EI quiver explicitly in Section~5. Theorem~\ref{thm:equiv-cat} states that the category associated to the quotient EI quiver is equivalent to the skew group category.

In Section~6, we recall the algebras $H$ \cite{GLS} and EI categories \cite{CW} associated to Cartan triples. For a finite group action on a finite acyclic quiver, we give sufficient conditions on when the quotient EI quiver is of Cartan type. Consequently, the skew group algebra of the path algebra is Morita equivalent to the algebra $H$; see Theorem~\ref{thm:quiver}.

In the final section, we first study induced modules over skew group algebras. We apply Theorem~\ref{thm:quiver} to the case where a finite cyclic $p$-group acts on a finite acyclic quiver. Theorem~\ref{thm:categorify} obtains a categorification of the folding projection $\boldsymbol{f}$, namely an additive functor from the module category over the path algebra to the category of $\tau$-locally free $H$-modules. In the Dynkin cases, restricting the categorification  to indecomposable modules, we obtain a categorification of the folding of positive roots; see Proposition~\ref{prop:Dynkin}. In the end, we give an explicit example to illustrate the categorification.

By default, a module means a finite dimensional left module. For a finite dimensional algebra $A$, we denote by $A\mbox{-mod}$ the abelian category of finite dimensional left $A$-modules. We use ${\rm rad}(A)$ to denote the Jacobson radical of $A$. The unadorned tensor $\otimes$ means the tensor product over the ground field $\mathbb{K}$.

\section{Skew group categories}

In this section, we recall basic facts about finite group actions on finite categories. The EI property of a skew group category is studied in Proposition~\ref{prop:skew-EI}.

\subsection{Finite $G$-categories}

Let $\mathcal{C}$ be a finite category, that is, a category with only finitely many morphisms. As any object is determined by its identity endomorphism, the finite category $\mathcal{C}$  necessarily has only finitely many objects. Denote by ${\rm Obj}(\mathcal{C})$ (\emph{resp.} ${\rm Mor}(\mathcal{C})$) the finite set of objects (\emph{resp.} morphisms) in $\mathcal{C}$. We denote by ${\rm Aut}(\mathcal{C})$ the automorphism group of $\mathcal{C}$.

Let $G$ be a finite group with its unit $1_G$. A finite \emph{$G$-category} $\mathcal{C}$  is a finite category equipped with a group homomorphism
$$\rho\colon G\longrightarrow {\rm Aut}(\mathcal{C}).$$
To simplify the notation, the following convention will be used: for $g\in G$ and $x\in {\rm Obj}(\mathcal{C})$, we write $g(x)=\rho(g)(x)$; for $\alpha\in {\rm Mor}(\mathcal{C})$, we write $g(\alpha)=\rho(g)(\alpha)$.

For a finite $G$-category $\mathcal{C}$, we  will recall the \emph{skew group category} $\mathcal{C}\rtimes G$; compare \cite[Subsection~3.1]{RR} and \cite[Definition~2.3]{CiM}. It has the same objects as $\mathcal{C}$; for two objects $x$ and  $y$, the corresponding Hom set is defined to be
\[{\rm Hom}_{\mathcal{C}\rtimes G}(x,y)=\{(\alpha,g) \mid g\in G, \alpha\in {\rm Hom}_{\mathcal{C}}(g(x),y)\}.\]
For any morphisms $(\alpha,g)\in {\rm Hom}_{\mathcal{C}\rtimes G}(x,y)$ and $(\beta,h)\in {\rm Hom}_{\mathcal{C}\rtimes G}(y,z)$, the composition is  defined by
\begin{align}\label{equ:composition}
(\beta,h)\circ (\alpha,g)=(\beta\circ h(\alpha), hg).
\end{align}
We observe that the identity endomorphism of $x$ in $\mathcal{C}\rtimes G$ is given by $({\rm Id}_x, 1_G)$, where ${\rm Id}_x$ is the identity  endomorphism of $x$ in $\mathcal{C}$. We mention that the formation of a skew group category might be viewed as a very special case of the Grothendieck construction; compare  \cite[VI.8]{Gro} and \cite[Section~7]{We07}.

Let $\mathbb{K}$ be a field and $\mathcal{C}$ be a finite category. The \emph{category algebra} $\mathbb{K}\mathcal{C}$ of $\mathcal{C}$ is a finite dimensional $\mathbb{K}$-algebra  defined as
follows. As a $\mathbb{K}$-vector space, $\mathbb{K}\mathcal{C}=\bigoplus\limits_{\alpha \in {\rm Mor}(\mathcal{C})} \mathbb{K}\alpha$, and the product between the basis elements  is given by the following rule:
\[\alpha  \beta=\left\{\begin{array}{ll}
\alpha\circ\beta, & \text{ if }\text{$\alpha$ and $\beta$ can be composed in $\mathcal{C}$}; \\
0, & \text{otherwise.}
\end{array}\right.\]
The unit of $\mathbb{K}\mathcal{C}$ is given by $1_{\mathbb{K}\mathcal{C}}=\sum\limits_{x \in {\rm Obj}(\mathcal{C})}{\rm Id}_x$.

Denote by $(\mathbb{K}\mbox{-mod})^\mathcal{C}$ the category of covariant functors from $\mathcal{C}$ to $\mathbb{K}\mbox{-mod}$. There is a canonical equivalence
\begin{align}\label{equ:can-C}
{\rm can}\colon \mathbb{K}\mathcal{C}\mbox{-mod} \stackrel{\sim}\longrightarrow (\mathbb{K}\mbox{-mod})^\mathcal{C},
\end{align}
sending a $\mathbb{K}\mathcal{C}$-module $M$ to the functor ${\rm can}(M)\colon \mathcal{C}\rightarrow \mathbb{K}\mbox{-mod}$ described as follows: ${\rm can}(M)(x)={\rm Id}_x.M$ for each object $x$ in $\mathcal{C}$; for any morphism $\alpha\colon x\rightarrow y$, we have
$${\rm can}(M)(\alpha)\colon {\rm can}(M)(x)\longrightarrow {\rm can}(M)(y),\; m\mapsto \alpha.m.$$
For details, we refer to \cite[Proposition~2.1]{We07}.

Denote by ${\rm Aut}(\mathbb{K}\mathcal{C})$ the group of algebra automorphisms on $\mathbb{K}\mathcal{C}$. Each categorical automorphism on $\mathcal{C}$ induces uniquely an algebra automorphism on $\mathbb{K}\mathcal{C}$. Therefore, there is a canonical embedding of groups
$${\rm Aut}(\mathcal{C})\hookrightarrow {\rm Aut}(\mathbb{K}\mathcal{C}).$$

Assume that $\mathcal{C}$ is a finite $G$-category. The group homomorphism $\rho\colon G\rightarrow {\rm Aut}(\mathcal{C})$ induces a group homomorphism $\rho'\colon G\rightarrow {\rm Aut}(\mathbb{K}\mathcal{C})$. In other words, the group $G$ acts on the algebra $\mathbb{K}\mathcal{C}$ by algebra automorphisms. We denote by $\mathbb{K}\mathcal{C}\# G$ the corresponding \emph{skew group algebra}. Here, we recall that $\mathbb{K}\mathcal{C}\# G=\mathbb{K}\mathcal{C}\otimes \mathbb{K}G$ as a $\mathbb{K}$-vector space, where the tensor product $\alpha\otimes g$ is written as $\alpha\#g$. The multiplication is  given by
\[(\beta\# h)(\alpha \# g)=\beta h(\alpha)\# hg\]
for any $\alpha, \beta\in {\rm Mor}(\mathcal{C})$ and $g, h\in G$. We emphasize that on the right hand side, $\beta h(\alpha)$ means the product of $\beta$ and $h(\alpha)$ in $\mathbb{K}\mathcal{C}$, namely, the composition $\beta\circ h(\alpha)$ in $\mathcal{C}$.

The following easy observation, extending \cite[Lemma~2.3.2]{Xu},  justifies the terminology `skew group category'.

\begin{prop}\label{prop:skew}
Let $\mathcal{C}$ be a finite $G$-category. Then there is an isomorphism of algebras
$$\mathbb{K}(\mathcal{C}\rtimes G)\overset{\sim}{\longrightarrow} \mathbb{K}\mathcal{C}\# G, $$
sending a morphism $(\alpha, g)$ in $\mathcal{C}\rtimes G$ to the element $\alpha\# g$ in $\mathbb{K}\mathcal{C}\#G$. \hfill $\square$
\end{prop}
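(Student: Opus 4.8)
The plan is to verify directly that the proposed assignment $(\alpha,g)\mapsto \alpha\#g$ extends to a well-defined linear map which is multiplicative and bijective, and thus an algebra isomorphism. First I would set up the map on the level of vector spaces: $\mathbb{K}(\mathcal{C}\rtimes G)$ has as a basis the set of morphisms $(\alpha,g)$ with $g\in G$ and $\alpha\in{\rm Hom}_\mathcal{C}(g(x),y)$ for objects $x,y$, so I define the linear map $\Phi$ by sending each basis element $(\alpha,g)$ to $\alpha\#g\in\mathbb{K}\mathcal{C}\#G$. The identification of bases is transparent because $\mathbb{K}\mathcal{C}\#G=\mathbb{K}\mathcal{C}\otimes\mathbb{K}G$ has basis $\{\alpha\#g\mid \alpha\in{\rm Mor}(\mathcal{C}),\, g\in G\}$, and every pair $(\alpha,g)$ with $\alpha$ an arbitrary morphism of $\mathcal{C}$ arises uniquely as a morphism of $\mathcal{C}\rtimes G$ with source determined by $g^{-1}$ applied to the source of $\alpha$. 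Hence $\Phi$ is a bijection on basis elements, so it is a linear isomorphism.

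Next I would check multiplicativity on basis elements, which is the only substantive computation. Take composable morphisms $(\alpha,g)\in{\rm Hom}_{\mathcal{C}\rtimes G}(x,y)$ and $(\beta,h)\in{\rm Hom}_{\mathcal{C}\rtimes G}(y,z)$. In $\mathbb{K}(\mathcal{C}\rtimes G)$ the product is the composition $(\beta,h)\circ(\alpha,g)=(\beta\circ h(\alpha),hg)$ by (\ref{equ:composition}). Applying $\Phi$ gives $(\beta\circ h(\alpha))\#hg$. On the other hand, the product in $\mathbb{K}\mathcal{C}\#G$ of $\Phi(\beta,h)=\beta\#h$ and $\Phi(\alpha,g)=\alpha\#g$ is $(\beta\#h)(\alpha\#g)=\beta\, h(\alpha)\#hg$ by the definition of the skew group algebra multiplication, and $\beta\,h(\alpha)$ is by convention the composition $\beta\circ h(\alpha)$ in $\mathcal{C}$. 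So the two agree. When the morphisms are not composable in $\mathcal{C}\rtimes G$, the product in $\mathbb{K}(\mathcal{C}\rtimes G)$ is zero; I would check that in this case $h(\alpha)$ and $\beta$ are not composable in $\mathcal{C}$ either (the target of $h(\alpha)$ is $h(y)$... actually $\alpha\colon g(x)\to y$ so $h(\alpha)\colon hg(x)\to h(y)$, while $\beta\colon h(y')\to z$, and composability in $\mathbb{K}\mathcal{C}\#G$ forces the target $h(y)$ of $h(\alpha)$ to equal the source $h(y')$ of $\beta$, i.e. $y=y'$), so the product in $\mathbb{K}\mathcal{C}\#G$ also vanishes. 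Thus $\Phi$ respects products in all cases, and by bilinearity it is an algebra homomorphism. Finally I would note $\Phi$ sends the unit $1_{\mathbb{K}(\mathcal{C}\rtimes G)}=\sum_x({\rm Id}_x,1_G)$ to $\sum_x{\rm Id}_x\#1_G=1_{\mathbb{K}\mathcal{C}}\#1_G=1_{\mathbb{K}\mathcal{C}\#G}$, so it is unital.

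The proof has essentially no obstacle: the statement is an unwinding of definitions, and the only point requiring mild care is matching the composition rule (\ref{equ:composition}) of the skew group category with the multiplication rule of the skew group algebra, together with the bookkeeping of sources and targets showing that non-composability is preserved in both directions. I would present this as a short verification, emphasizing the correspondence of bases and the one-line comparison of the two product formulas, and leave the unit check as a remark.
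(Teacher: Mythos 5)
Your verification is correct and complete: the paper states Proposition~\ref{prop:skew} as an easy observation and omits the proof entirely, and your direct check (bijection of bases, matching of the composition rule (\ref{equ:composition}) with the skew group algebra product including the non-composable case, and unitality) is exactly the routine argument being left to the reader.
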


In the following lemma, we collect elementary facts on skew group categories.

\begin{lem}\label{lem:facts}
Let $\mathcal{C}$ be a finite $G$-category. Then the following two statements hold.
\begin{enumerate}
\item A morphism $(\alpha,g)$ in $\mathcal{C}\rtimes G$  is an isomorphism  if and only if $\alpha$ is an isomorphism in $\mathcal{C}$.
    \item For two objects $x$ and $y$ in $\mathcal{C}$, they are isomorphic in $\mathcal{C}\rtimes G$ if and only if $x$ is isomorphic to $g(y)$ in $\mathcal{C}$ for some $g\in G$.
\end{enumerate}
\end{lem}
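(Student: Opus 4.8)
The plan is to verify both statements directly from the definition of composition in $\mathcal{C}\rtimes G$ given in \eqref{equ:composition}.

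For (1), first I would observe that the candidate inverse is forced by the composition rule. Suppose $(\alpha, g)\in {\rm Hom}_{\mathcal{C}\rtimes G}(x,y)$, so that $\alpha\in {\rm Hom}_{\mathcal{C}}(g(x),y)$. I claim that if $\alpha$ is an isomorphism in $\mathcal{C}$, then $(g^{-1}(\alpha^{-1}), g^{-1})$ is a two-sided inverse of $(\alpha,g)$. Here $\alpha^{-1}\in {\rm Hom}_{\mathcal{C}}(y, g(x))$, and applying $\rho(g^{-1})$ gives $g^{-1}(\alpha^{-1})\in {\rm Hom}_{\mathcal{C}}(g^{-1}(y), x)$, which indeed lies in ${\rm Hom}_{\mathcal{C}\rtimes G}(y,x)$. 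The computation is routine: using \eqref{equ:composition},
\begin{align*}
(g^{-1}(\alpha^{-1}), g^{-1})\circ (\alpha, g) &= (g^{-1}(\alpha^{-1})\circ g^{-1}(\alpha),\; g^{-1}g) = (g^{-1}(\alpha^{-1}\circ \alpha), 1_G) = ({\rm Id}_x, 1_G),
\end{align*}
and the other composite is computed the same way, yielding $({\rm Id}_y, 1_G)$. Conversely, if $(\alpha,g)$ is an isomorphism with inverse $(\beta, h)$, then $(\beta,h)\circ(\alpha,g)=({\rm Id}_x,1_G)$ forces $hg=1_G$, i.e. $h=g^{-1}$, and $\beta\circ h(\alpha)={\rm Id}_x$; similarly the other composite forces $h(\alpha)\circ\beta={\rm Id}_{h(y)}$. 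Hence $h(\alpha)=g^{-1}(\alpha)$ is an isomorphism in $\mathcal{C}$, and therefore so is $\alpha$, since $\rho(g^{-1})$ is a categorical automorphism.

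For (2), the statement follows formally from (1). If $x\cong y$ in $\mathcal{C}\rtimes G$, pick an isomorphism $(\alpha,g)\colon x\to y$; then $\alpha\colon g(x)\to y$ is an isomorphism in $\mathcal{C}$ by part (1), so $y$ is isomorphic to $g(x)$ in $\mathcal{C}$. Applying $\rho(g^{-1})$ to this isomorphism shows $x\cong g^{-1}(y)$ in $\mathcal{C}$, which is the desired conclusion with $g^{-1}$ in place of $g$. Conversely, if $\alpha\colon x\to g(y)$ is an isomorphism in $\mathcal{C}$ for some $g\in G$, then applying $\rho(g^{-1})$ gives an isomorphism $g^{-1}(\alpha)\colon g^{-1}(x)\to y$ in $\mathcal{C}$; viewing this as a morphism $(g^{-1}(\alpha), g^{-1})\in {\rm Hom}_{\mathcal{C}\rtimes G}(x,y)$ and invoking part (1) again, it is an isomorphism in $\mathcal{C}\rtimes G$, so $x\cong y$ there.

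There is no serious obstacle here; the only point requiring a little care is keeping the source and target of each morphism straight after applying the automorphisms $\rho(g)$, since the twisting by $g$ in the Hom-sets of $\mathcal{C}\rtimes G$ makes the bookkeeping slightly delicate. In particular one must remember that $(\alpha,g)$ with $\alpha\in {\rm Hom}_{\mathcal{C}}(g(x),y)$ is \emph{not} literally a morphism $g(x)\to y$ in $\mathcal{C}\rtimes G$ but a morphism $x\to y$, and that the inverse necessarily picks up the group element $g^{-1}$ together with an application of $\rho(g^{-1})$ to $\alpha^{-1}$. Once this is set up correctly, both verifications are immediate from \eqref{equ:composition}.
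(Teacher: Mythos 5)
Your proposal is correct and follows essentially the same route as the paper: the same candidate inverse $(g^{-1}(\alpha^{-1}),g^{-1})$ for part (1), the same forcing of the group component of the inverse to be $g^{-1}$ for the converse, and part (2) deduced formally from part (1); you merely write out the compositions the paper leaves as "direct to see." The source/target bookkeeping is handled correctly throughout.
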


\begin{proof} (1) For the ``if" part, we assume that $\alpha^{-1}$ is the inverse of $\alpha$ in $\mathcal{C}$. Then $(g^{-1}(\alpha^{-1}), g^{-1})$ is a well-defined morphism in $\mathcal{C}\rtimes G$; moreover, it is the required inverse of $(\alpha, g)$.

For the ``only if" part, we observe that the inverse of $(\alpha, g)$ has to be of the form $(\beta, g^{-1})$. Then it is direct to see that $g(\beta)$ is the inverse of $\alpha$, as required.

(2)  For the ``if" part, we assume that $\alpha\colon g(y) \rightarrow x$ is an isomorphism in $\mathcal{C}$. Then $(\alpha, g)$ is a morphism from $y$ to $x$ in $\mathcal{C}\rtimes G$; moreover, by (1) it is an isomorphism between $y$ and $x$.

For the ``only if" part, we assume that $(\alpha, g)\in {\rm Hom}_{\mathcal{C}\rtimes G}(y, x)$ is an isomorphism. By (1), we deduce that $\alpha$ is an isomorphism from $g(y)$ to $x$ in $\mathcal{C}$. \end{proof}

\subsection{The EI property}

Let $\mathcal{C}$ be a finite $G$-category as  above. For each object $x$ in $\mathcal{C}$, we denote by $G_x=\{g\in G \mid g(x)=x\}$ its stabilizer. We observe that $G_x$ acts on the monoid ${\rm Hom}_\mathcal{C}(x, x)$ by monoid automorphisms. Denote by ${\rm Hom}_\mathcal{C}(x, x)\rtimes G_x$ the corresponding semi-direct product. There is an inclusion between monoids
\begin{align*}
{\rm inc}_x\colon {\rm Hom}_\mathcal{C}(x, x)\rtimes G_x\hookrightarrow {\rm Hom}_{\mathcal{C}\rtimes G}(x, x), \quad (\alpha, g)\mapsto (\alpha, g).
\end{align*}

The following terminology is inspired by \cite[Subsection~12.1.1]{Lus}.

\begin{defn}\label{defn:adm}
A finite $G$-category $\mathcal{C}$ is \emph{admissible}, provided that for any  $x\in {\rm Obj}(\mathcal{C})$ and $g\in G$, ${\rm Hom}_{\mathcal{C}}(g(x),x)=\emptyset$ whenever $g(x)\neq x$. \hfill $\square$
\end{defn}

\begin{lem}\label{lem:adm}
A finite $G$-category $\mathcal{C}$ is admissible if and only if ${\rm inc}_x$ is surjective for each object $x$ in $\mathcal{C}$.
\end{lem}

\begin{proof}
The inclusion ${\rm inc}_x$ is not surjective if and only if there exists $g\in G$ satisfying $g(x)\neq x$ and ${\rm Hom}_\mathcal{C}(g(x), x)\neq \emptyset$. Then the result follows immediately.
\end{proof}

Recall from \cite{We07} that a finite category $\mathcal{C}$ is EI if every \underline{e}ndomorphism is an \underline{i}somorphism. Therefore, for each object $x$,  ${\rm Hom}_{\mathcal{C}}(x,x)={\rm Aut}_{\mathcal{C}}(x)$ is a finite group. Finite EI categories are of interest from many different perspectives; for example, see \cite{We08,Xu}.

\begin{prop}\label{prop:skew-EI}
Let $\mathcal{C}$ be a finite  $G$-category. Then $\mathcal{C}$ is an EI category if and only if so is $\mathcal{C}\rtimes G$.
\end{prop}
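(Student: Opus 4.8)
The plan is to prove both directions by relating endomorphisms in $\mathcal{C}$ and in $\mathcal{C}\rtimes G$ via Lemma~\ref{lem:facts}(1). The key observation is that an endomorphism of an object $x$ in $\mathcal{C}\rtimes G$ is a pair $(\alpha,g)$ with $g\in G$ and $\alpha\in {\rm Hom}_{\mathcal{C}}(g(x),x)$, so $\alpha$ is a morphism in $\mathcal{C}$ that need not be an endomorphism; this is the source of the asymmetry one has to handle.

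For the ``only if'' direction, suppose $\mathcal{C}$ is EI; I want to show every endomorphism $(\alpha,g)$ of $x$ in $\mathcal{C}\rtimes G$ is invertible. By Lemma~\ref{lem:facts}(1) it suffices to show $\alpha$ is an isomorphism in $\mathcal{C}$, where $\alpha\colon g(x)\to x$. Since $g$ has finite order, say $n$, one can consider the $n$-fold composite $(\alpha,g)^n$, which lands again in ${\rm Hom}_{\mathcal{C}\rtimes G}(x,x)$ but now has second component $g^n=1_G$, hence is an honest endomorphism $\beta\colon x\to x$ in $\mathcal{C}$ (up to the first component); by the EI hypothesis $\beta\in {\rm Aut}_\mathcal{C}(x)$. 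Writing out $\beta = \alpha\circ g(\alpha)\circ g^2(\alpha)\circ\cdots\circ g^{n-1}(\alpha)$, the fact that this composite is invertible forces $\alpha$ to have a left inverse; a symmetric argument (or using that each $g^i(\alpha)$ plays the same role up to applying $g$) shows it has a right inverse too, so $\alpha$ is an isomorphism, completing this direction.

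For the ``if'' direction, suppose $\mathcal{C}\rtimes G$ is EI and let $\alpha\colon x\to x$ be an endomorphism in $\mathcal{C}$. Then $(\alpha,1_G)$ is an endomorphism of $x$ in $\mathcal{C}\rtimes G$, which by hypothesis is an isomorphism; by Lemma~\ref{lem:facts}(1) this means $\alpha$ is an isomorphism in $\mathcal{C}$, as required. This direction is immediate.

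The main obstacle is the ``only if'' direction, specifically the passage from ``the composite $\alpha\circ g(\alpha)\circ\cdots\circ g^{n-1}(\alpha)$ is an automorphism of $x$'' to ``$\alpha$ itself is an isomorphism''. In a general finite category this inference could fail, but here finiteness of $\mathcal{C}$ is exactly what saves us: the maps $g^i(\alpha)$ for $i=0,\dots,n-1$ are morphisms between the finitely many objects $x, g^{-1}(x), g^{-2}(x),\dots$, and composing the cyclic word repeatedly gives, by finiteness of the relevant Hom-sets, eventually an idempotent-like stabilization; combined with the fact that one full cycle is already invertible one extracts the two-sided inverse of $\alpha$. I would phrase this cleanly by noting that since $(\alpha,g)^n$ is invertible in $\mathcal{C}\rtimes G$, the morphism $(\alpha,g)$ is invertible in $\mathcal{C}\rtimes G$ (a morphism some power of which is invertible, in a finite --- hence Hom-finite --- category, is itself invertible, because left-multiplication by it is an injective endomap of a finite set and hence bijective), and then apply Lemma~\ref{lem:facts}(1) to conclude $\alpha$ is an isomorphism in $\mathcal{C}$; this keeps the combinatorial finiteness argument in one place rather than spreading it across the composite.
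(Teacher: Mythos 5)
Your proof is correct and follows essentially the same route as the paper: the ``if'' direction is identical, and for the ``only if'' direction both arguments iterate $g$ to close $\alpha$ into a cycle $x=g^d(x)\to\cdots\to g(x)\to x$ in $\mathcal{C}$, use the EI property to see the full cycle composes to an automorphism, and deduce that $\alpha$ (equivalently $(\alpha,g)$, via Lemma~\ref{lem:facts}(1)) is invertible. You merely spell out the inverse-extraction step that the paper compresses into ``all the morphisms in the chain are isomorphisms.''
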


\begin{proof}
For the ``if" part, we assume that $\mathcal{C}\rtimes G$ is an EI category. For any $\alpha\in {\rm Hom}_{\mathcal{C}}(x,x)$,  $(\alpha,1_G)$ is an endomorphism of $x$ in $\mathcal{C}\rtimes G$. Since $\mathcal{C}\rtimes G$ is EI, $(\alpha, 1_G)$ is an isomorphism. By Lemma~\ref{lem:facts}(1), the endomorphism $\alpha$ is an isomorphism in $\mathcal{C}$, as required.

For the ``only if" part, we assume that $\mathcal{C}$ is an EI category. Any endomorphism of $x$ in $\mathcal{C}\rtimes G$ is of the form $(\alpha, g)$, where $\alpha\colon g(x)\rightarrow x$ is a morphism in $\mathcal{C}$. Assume that $g^d=1_G$ for some $d\geq 1$. Then we have a chain
$$x=g^d(x)\stackrel{g^{d-1}(\alpha)} \longrightarrow g^{d-1}(x) \longrightarrow \cdots \longrightarrow g^2(x)\stackrel{g(\alpha)}\longrightarrow g(x)\stackrel{\alpha}\longrightarrow x$$
of morphisms in $\mathcal{C}$. Since $\mathcal{C}$ is EI, it follows that all the morphisms in the chain are isomorphisms. In particular, the morphism $\alpha$ is  an isomorphism. Applying Lemma~\ref{lem:facts}(1), we infer that the endomorphism  $(\alpha, g)$ is an isomorphism, proving that $\mathcal{C}\rtimes G$ is an EI category.
\end{proof}

The following corollary follows immediately from Lemma~\ref{lem:adm}.

\begin{cor}\label{cor:EI}
Let $\mathcal{C}$ be a finite  admissible $G$-category. Assume that $\mathcal{C}$ is EI. Then for each object $x$, we have an identification of groups
$$\hskip 115pt
{\rm Aut}_\mathcal{C}(x)\rtimes G_x={\rm Aut}_{\mathcal{C}\rtimes G}(x).
\hskip 115pt \square$$
\end{cor}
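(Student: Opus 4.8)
The plan is to combine the two immediately preceding results. By Proposition~\ref{prop:skew-EI}, since $\mathcal{C}$ is EI, so is $\mathcal{C}\rtimes G$; hence for each object $x$ the endomorphism monoid ${\rm Hom}_{\mathcal{C}\rtimes G}(x,x)$ is actually the group ${\rm Aut}_{\mathcal{C}\rtimes G}(x)$, and likewise ${\rm Hom}_{\mathcal{C}}(x,x)={\rm Aut}_{\mathcal{C}}(x)$. Since $\mathcal{C}$ is admissible, Lemma~\ref{lem:adm} tells us that the monoid inclusion
$${\rm inc}_x\colon {\rm Aut}_{\mathcal{C}}(x)\rtimes G_x\hookrightarrow {\rm Aut}_{\mathcal{C}\rtimes G}(x)$$
is surjective, hence bijective. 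It remains only to note that ${\rm inc}_x$ is a homomorphism of groups (it is by construction a homomorphism of monoids between two groups), and a bijective group homomorphism is an isomorphism. So I would simply write: ``By Proposition~\ref{prop:skew-EI} and the EI hypothesis on $\mathcal{C}$, both endomorphism monoids in question are groups; by Lemma~\ref{lem:adm} and admissibility, the monoid homomorphism ${\rm inc}_x$ is surjective, hence an isomorphism of groups, giving the claimed identification.''

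There is essentially no obstacle here — the corollary is explicitly advertised in the text as following ``immediately'' from Lemma~\ref{lem:adm}, and the only thing to be careful about is to observe that ${\rm inc}_x$ is a group homomorphism (which is transparent from the defining formula, since the composition rule \eqref{equ:composition} restricted to pairs $(\alpha,g)$ with $g\in G_x$ is exactly the semidirect product multiplication on ${\rm Aut}_{\mathcal{C}}(x)\rtimes G_x$) and that injectivity of ${\rm inc}_x$ is clear from its definition. So the proof is one or two sentences.

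If one wanted to be slightly more self-contained, one could spell out why ${\rm inc}_x$ is a monoid homomorphism: for $(\alpha,g),(\beta,h)\in {\rm Aut}_{\mathcal{C}}(x)\rtimes G_x$, the product in the semidirect product is $(\beta\circ h(\alpha),hg)$, and this is precisely $(\beta,h)\circ(\alpha,g)$ in $\mathcal{C}\rtimes G$ by \eqref{equ:composition} — note $h(\alpha)$ makes sense and lands in ${\rm Aut}_{\mathcal{C}}(x)$ because $h\in G_x$ fixes $x$. Injectivity is immediate since $(\alpha,g)\mapsto(\alpha,g)$, and surjectivity is Lemma~\ref{lem:adm}. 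I expect the published proof to be just the pointer ``This follows immediately from Lemma~\ref{lem:adm}'' together with the observation that an EI category turns the relevant endomorphism monoids into groups, so that a surjective monoid homomorphism between finite groups is an isomorphism.
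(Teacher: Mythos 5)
Your proof is correct and is essentially the argument the paper intends: the corollary is stated as following immediately from Lemma~\ref{lem:adm}, with Proposition~\ref{prop:skew-EI} guaranteeing that both endomorphism monoids are groups and admissibility making the monoid inclusion ${\rm inc}_x$ surjective, hence a group isomorphism. Your extra remark that the composition rule \eqref{equ:composition} restricted to $g,h\in G_x$ is exactly the semidirect product multiplication is the right (and only) point worth spelling out.
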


\section{Free EI categories}

In this section, we study the unique factorization property of morphisms and free EI categories \cite{L2011}. We prove that a skew group category is free EI if and only if so is the given category; see Proposition~\ref{prop:Free}.

Let $\mathcal{C}$ be a finite  category. Recall from \cite[Definition~2.3]{L2011} that a morphism $\alpha\colon x\rightarrow y$ in $\mathcal{C}$
is \emph{unfactorizable}, if it is a non-isomorphism and
whenever it has a factorization $x
\overset{\beta}{\rightarrow} z \overset{\gamma}{\rightarrow} y$,
then either $\beta$ or $\gamma$ is an isomorphism.
We observe that if $\alpha \colon x \rightarrow y$ is unfactorizable, then so is $h\circ\alpha\circ g$ for any isomorphism $h$ and $g$.

As the notion of an unfactorizable morphism is categorical, it is preserved by any categorical automorphism. Then the following observation is clear.

\begin{lem}\label{lem:unf-iso}
Let $G$ be a finite group and  $\mathcal{C}$ be a finite $G$-category. Then for any morphism $\alpha$ in $\mathcal{C}$ and $g\in G$, $\alpha$ is unfactorizable if and only if  so is $g(\alpha)$. \hfill $\square$
\end{lem}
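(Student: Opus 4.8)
The plan is to exploit the fact that unfactorizability is a purely categorical notion: the definition in \cite[Definition~2.3]{L2011} refers only to whether a morphism is an isomorphism and to the existence of factorizations through intermediate objects. Consequently it is preserved by any automorphism of $\mathcal{C}$, and the relevant automorphism here is $\rho(g)$. Since $\rho(g^{-1})=\rho(g)^{-1}$ is again a categorical automorphism of $\mathcal{C}$, it suffices to prove one implication, say that if $\alpha$ is unfactorizable then so is $g(\alpha)$; the converse then follows by interchanging the roles of $g$ and $g^{-1}$.

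So I would assume $\alpha\colon x\to y$ is unfactorizable. First, record the elementary fact that a categorical automorphism is compatible with composition and carries isomorphisms to isomorphisms and non-isomorphisms to non-isomorphisms (both directions being automatic from the existence of the inverse functor); hence $g(\alpha)\colon g(x)\to g(y)$ is a non-isomorphism. Next, take an arbitrary factorization $g(x)\xrightarrow{\beta} z\xrightarrow{\gamma} g(y)$ of $g(\alpha)$ in $\mathcal{C}$. Applying $\rho(g^{-1})$ to this factorization yields $x\xrightarrow{g^{-1}(\beta)} g^{-1}(z)\xrightarrow{g^{-1}(\gamma)} y$, which is a factorization of $\alpha$. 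By unfactorizability of $\alpha$, one of $g^{-1}(\beta)$ and $g^{-1}(\gamma)$ is an isomorphism; applying $\rho(g)$ back, the corresponding one of $\beta$ and $\gamma$ is an isomorphism. This proves that $g(\alpha)$ is unfactorizable.

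I do not expect any genuine obstacle here: the whole content is the observation that $\rho(g)$ is an automorphism of the category and that the definition of an unfactorizable morphism is stated entirely in terms of the categorical structure. The only point requiring mild care is to note that being an isomorphism (resp.\ a non-isomorphism) is preserved in \emph{both} directions under $\rho(g)$, which is immediate since $\rho(g)$ admits the inverse functor $\rho(g^{-1})$; after that the argument is just bookkeeping with the assignment $\beta\mapsto g^{-1}(\beta)$.
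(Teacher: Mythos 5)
Your proposal is correct and is exactly the argument the paper intends: the paper dispenses with the lemma by observing that unfactorizability is a categorical notion and hence preserved by any categorical automorphism, which is precisely what you spell out via $\rho(g)$ and its inverse $\rho(g^{-1})$. No gaps.
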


We say that a morphism $\alpha$ in a finite
category $\mathcal{C}$ satisfies the Unique Factorization Property (UFP), if it is either an isomorphism or, whenever it has two  factorizations into unfactorizable morphisms:
\[ x=x_0\overset{\alpha_1}{\rightarrow}
x_1\overset{\alpha_2}{\rightarrow} \cdots
\overset{\alpha_m}{\rightarrow} x_m=y\] and
\[x=y_0\overset{\beta_1}{\rightarrow}
y_1\overset{\beta_2}{\rightarrow} \cdots
\overset{\beta_n}{\rightarrow} y_n=y,\]
then $m=n$, and there are isomorphisms $\gamma_i\colon x_i \rightarrow y_i$ in $\mathcal{C}$ for $1\leq i\leq m-1$, such that the following diagram commutes.
\begin{equation*}
\xymatrix@C=1.0cm{
	x=x_0\ar[r]^{\alpha_1} \ar@{=}[d] & x_1 \ar[r]^{\alpha_2} \ar[d]^{\gamma_1} & x_2 \ar[r]^{\alpha_3} \ar[d]^{\gamma_2} &  \cdots \ar[r]^{\alpha_{m-1}}  & x_{m-1} \ar[r]^{\alpha_m} \ar[d]^{\gamma_{m-1}}& x_m=y \ar@{=}[d] \\
	x=y_0\ar[r]^{\beta_1} & y_1 \ar[r]^{\beta_2} & y_2 \ar[r]^{\beta_3} & \cdots \ar[r]^{\beta_{m-1}}  & y_{m-1} \ar[r]^{\beta_m}& y_m=y
}
\end{equation*}

We mention that, in general,  a non-isomorphism in a finite category $\mathcal{C}$ might not have a factorization into unfactorizable morphisms. However, if $\mathcal{C}$ is EI,  any non-isomorphism in $\mathcal{C}$ has a factorization
into unfactorizable morphisms; see \cite[Proposition~2.6]{L2011}.

\begin{lem}\label{lem:unf}
Let $G$ be a finite group and  $\mathcal{C}$ be a finite $G$-category. Then a morphism $(\alpha,g)$ in $\mathcal{C}\rtimes G$ is unfactorizable if and only if $\alpha$ is unfactorizable in $\mathcal{C}$.
\end{lem}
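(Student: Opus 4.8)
The plan is to transfer factorizations between $\mathcal{C}\rtimes G$ and $\mathcal{C}$ using the composition rule (\ref{equ:composition}) together with Lemma~\ref{lem:facts}(1) and Lemma~\ref{lem:unf-iso}. First note that by Lemma~\ref{lem:facts}(1), $(\alpha,g)$ is an isomorphism in $\mathcal{C}\rtimes G$ if and only if $\alpha$ is an isomorphism in $\mathcal{C}$, so we may assume throughout that $\alpha$ is a non-isomorphism (equivalently $(\alpha,g)$ is a non-isomorphism), and it suffices to compare the nontrivial factorizations.

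For the ``only if'' direction, suppose $(\alpha,g)\colon x\to y$ is unfactorizable and $\alpha=\gamma\circ\beta$ with $x\xrightarrow{\beta} z\xrightarrow{\gamma} y$ in $\mathcal{C}$, where $\alpha\colon g(x)\to y$, so $\beta\colon g(x)\to z$ and $\gamma\colon z\to y$. I would split $(\alpha,g)$ as $(\gamma,1_G)\circ(\beta,g)$, using (\ref{equ:composition}): indeed $(\gamma,1_G)\circ(\beta,g)=(\gamma\circ 1_G(\beta), 1_G\cdot g)=(\gamma\circ\beta,g)=(\alpha,g)$. Since $(\alpha,g)$ is unfactorizable, one of $(\gamma,1_G)$, $(\beta,g)$ is an isomorphism in $\mathcal{C}\rtimes G$; by Lemma~\ref{lem:facts}(1) this means $\gamma$ or $\beta$ is an isomorphism in $\mathcal{C}$, which is exactly what is needed to conclude $\alpha$ is unfactorizable.

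For the ``if'' direction, suppose $\alpha$ is unfactorizable in $\mathcal{C}$ and $(\alpha,g)=(\beta,h)\circ(\delta,k)$ is a factorization in $\mathcal{C}\rtimes G$ through some object $z$, where $(\delta,k)\colon x\to z$ and $(\beta,h)\colon z\to y$. By (\ref{equ:composition}), this says $\alpha=\beta\circ h(\delta)$ and $g=hk$, with $\delta\colon k(x)\to z$, hence $h(\delta)\colon hk(x)=g(x)\to h(z)$ and $\beta\colon h(z)\to y$. Thus $\alpha=\beta\circ h(\delta)$ is a factorization of $\alpha$ in $\mathcal{C}$ through $h(z)$. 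Since $\alpha$ is unfactorizable, $\beta$ or $h(\delta)$ is an isomorphism in $\mathcal{C}$; in the latter case $\delta$ is also an isomorphism since $h$ is a categorical automorphism (or one may invoke Lemma~\ref{lem:unf-iso} in the contrapositive form). By Lemma~\ref{lem:facts}(1) again, this gives that $(\beta,h)$ or $(\delta,k)$ is an isomorphism in $\mathcal{C}\rtimes G$, so $(\alpha,g)$ is unfactorizable.

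There is essentially no serious obstacle here; the only point requiring a little care is bookkeeping of the $G$-twists in the composition formula — making sure the source/target objects of $\beta$ and $h(\delta)$ match up so that $\beta\circ h(\delta)$ is genuinely a composable pair in $\mathcal{C}$ — and recording that applying a categorical automorphism $h$ preserves the property of being an isomorphism. Both are immediate. I would also remark, if convenient, that the ``only if'' direction is the one actually used later, but the symmetry of the argument makes it natural to prove both at once.
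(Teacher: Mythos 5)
Your proof is correct and follows essentially the same route as the paper: both directions hinge on the decomposition $(\alpha,g)=(\gamma,1_G)\circ(\beta,g)$ for one implication and on reading off $\alpha=\beta\circ h(\delta)$ from the composition rule for the other, together with Lemma~\ref{lem:facts}(1) and the fact that categorical automorphisms preserve isomorphisms. The only cosmetic difference is that the paper phrases the ``only if'' direction as a proof by contradiction, whereas you argue it directly.
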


\begin{proof}
By Lemma~\ref{lem:facts}(1), we observe that $(\alpha,g)$ is a non-isomorphism if and only if so is $\alpha$.

For the ``if" part, we assume that $\alpha$ is unfactorizable. Suppose we have a factorization $$(\alpha,g)=(\beta,h)\circ (\gamma,k)=(\beta\circ h(\gamma),hk)$$
 in $\mathcal{C}\rtimes G$. The factorization $\alpha=\beta\circ h(\gamma)$ in $\mathcal{C}$ implies that
either $\beta$ or $h(\gamma)$ is an isomorphism. As $h\in G$ induces a categorical automorphism on $\mathcal{C}$, we infer that $h(\gamma)$ is an isomorphism if and only if so is $\gamma$. In view of Lemma~\ref{lem:facts}(1), we infer that either $(\beta, h)$ or $(\gamma, k)$ is an isomorphism in $\mathcal{C}\rtimes G$, proving that $(\alpha, g)$ is unfactorizable.

For the ``only if" part, we assume that $(\alpha,g)$ is unfactorizable. Assume on the contrary that $\alpha=\beta\circ \gamma$ with both $\beta$ and $\gamma$ non-isomorphisms in $\mathcal{C}$. Then we have $$(\alpha,g)=(\beta,1_G)\circ (\gamma,g).$$
By Lemma~\ref{lem:facts}(1), we have that both $(\beta,1_G)$ and $(\gamma,g)$ are non-isomorphisms in  $\mathcal{C}\rtimes G$. This contradicts to the unfactorizability of $(\alpha, g)$.
\end{proof}

The following result characterizes  the UFP of morphisms in a skew group category.

\begin{prop}\label{prop:UFP}
	Let $G$ be a finite group and  $\mathcal{C}$ be a finite $G$-category. Then a morphism $(\alpha,g)$ in $\mathcal{C}\rtimes G$ satisfies the UFP  if and only if $\alpha$ satisfies the UFP in $\mathcal{C}$.
\end{prop}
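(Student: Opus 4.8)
The plan is to leverage the bijection between factorizations in $\mathcal{C}\rtimes G$ and factorizations in $\mathcal{C}$ that is implicitly set up by Lemmas~\ref{lem:facts} and~\ref{lem:unf}. First I would dispose of the isomorphism case: by Lemma~\ref{lem:facts}(1), $(\alpha,g)$ is an isomorphism in $\mathcal{C}\rtimes G$ if and only if $\alpha$ is an isomorphism in $\mathcal{C}$, so if either side is an isomorphism both sides trivially satisfy the UFP. Thus we may assume $\alpha$ (equivalently $(\alpha,g)$) is a non-isomorphism, and $\mathcal{C}$ is EI is not needed — but note that a factorization into unfactorizable morphisms is what must be compared, and by Lemma~\ref{lem:unf} the unfactorizable morphisms in $\mathcal{C}\rtimes G$ ending at $(\alpha,g)$ are exactly the pairs $(\beta,h)$ with $\beta$ unfactorizable in $\mathcal{C}$.

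The key combinatorial observation is the following normal form for factorizations. Given a factorization of $(\alpha,g)$ into unfactorizable morphisms
\[
x=x_0 \xrightarrow{(\alpha_1,g_1)} x_1 \xrightarrow{(\alpha_2,g_2)} \cdots \xrightarrow{(\alpha_m,g_m)} x_m = y,
\]
repeatedly applying the composition rule (\ref{equ:composition}) shows that the composite equals $(\alpha_m\circ g_m(\alpha_{m-1})\circ g_mg_{m-1}(\alpha_{m-2})\circ\cdots, g_mg_{m-1}\cdots g_1)$, so the $G$-component forces $g = g_m g_{m-1}\cdots g_1$, and the $\mathcal{C}$-component yields a factorization
\[
g(x)=z_0 \xrightarrow{\;\tilde\alpha_1\;} z_1 \xrightarrow{\;\tilde\alpha_2\;}\cdots \xrightarrow{\;\tilde\alpha_m\;} z_m = y
\]
of $\alpha$ in $\mathcal{C}$, where $\tilde\alpha_k = (g_mg_{m-1}\cdots g_{k+1})(\alpha_k)$, each of which is unfactorizable in $\mathcal{C}$ by Lemma~\ref{lem:unf-iso}. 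Conversely, any factorization of $\alpha$ into unfactorizable morphisms in $\mathcal{C}$ lifts to one of $(\alpha,g)$ in $\mathcal{C}\rtimes G$ (for instance, by putting the $G$-components to be $1_G$ in all but the last factor, and $g$ in the first, or more symmetrically by choosing a "gauge"); the point is that the lift exists and both constructions are mutually inverse up to post-composition with isomorphisms of the form $(\gamma,k)$ with $\gamma$ an isomorphism in $\mathcal{C}$.

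With this translation in hand the proof becomes a matching argument. For the ``if" part, suppose $\alpha$ satisfies the UFP in $\mathcal{C}$. Given two factorizations of $(\alpha,g)$ into unfactorizable morphisms, push them down to $\mathcal{C}$ as above, obtaining two factorizations of $\alpha$ of lengths $m$ and $n$; the UFP for $\alpha$ gives $m=n$ and connecting isomorphisms $\gamma_i\colon z_i\to z_i'$ in $\mathcal{C}$ making the ladder commute. Then I would lift each $\gamma_i$ to an isomorphism $(\gamma_i', k_i)$ in $\mathcal{C}\rtimes G$ — here one has to choose the group components $k_i$ consistently so that the resulting squares commute in $\mathcal{C}\rtimes G$, which is dictated by the relations $g_i$ versus $g_i'$ accumulated on each side — and verify, using (\ref{equ:composition}) and Lemma~\ref{lem:facts}(1), that the lifted ladder commutes. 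For the ``only if" part one argues symmetrically: two factorizations of $\alpha$ in $\mathcal{C}$ are lifted to factorizations of $(\alpha,g)$ in $\mathcal{C}\rtimes G$, the UFP for $(\alpha,g)$ is applied, and the connecting isomorphisms are pushed back down to $\mathcal{C}$ via Lemma~\ref{lem:facts}(1), where the group components simply disappear.

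The main obstacle I anticipate is purely bookkeeping: keeping precise track of which products $g_mg_{m-1}\cdots g_{k+1}$ of group elements act on which morphism $\alpha_k$ when translating a factorization up or down, and then choosing the group components of the lifted connecting isomorphisms $(\gamma_i',k_i)$ so that every square of the ladder genuinely commutes in $\mathcal{C}\rtimes G$ rather than merely in $\mathcal{C}$. This is not conceptually hard, but it is the place where a sign-type error (here, a group-element-placement error) is easy to make, so I would set up the indexing conventions carefully once at the start and reuse them in both directions.
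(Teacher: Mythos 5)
Your proposal is correct and follows essentially the same route as the paper: reduce to the non-isomorphism case via Lemma~\ref{lem:facts}(1), push a factorization of $(\alpha,g)$ down to the factorization of $\alpha$ with components $(g_m\cdots g_{k+1})(\alpha_k)$ (and conversely lift by placing $g$ in the first factor and $1_G$ elsewhere), then transport the connecting isomorphisms using Lemmas~\ref{lem:unf-iso} and~\ref{lem:unf}. The ``bookkeeping'' you defer is exactly what the paper carries out by setting $a_i=h_{i+1}^{-1}\cdots h_n^{-1}g_n\cdots g_{i+1}$ and $\eta_i=h_{i+1}^{-1}\cdots h_n^{-1}(\theta_i)$, and it works out as you predict.
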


\begin{proof}
 By Lemma~\ref{lem:facts}(1), the morphism $(\alpha, g)$ is an isomorphism if and only if so is $\alpha$. In the following proof, we will assume that both $(\alpha, g)$ and $\alpha\colon g(x)\rightarrow y$ are non-isomorphisms.

For the ``if" part, we assume that $\alpha\colon g(x)\rightarrow y$ satisfies the UFP in $\mathcal{C}$.
Suppose that $(\alpha,g)\colon x\rightarrow y$ has two factroizations into
unfactorizable morphisms in $\mathcal{C}\rtimes G$:
\[ x=x_0\overset{(\alpha_1,g_1)}{\longrightarrow}
x_1\overset{(\alpha_2,g_2)}{\longrightarrow} \cdots
\overset{(\alpha_n,g_n)}{\longrightarrow} x_n=y\] and
\[x=y_0\overset{(\beta_1,h_1)}{\longrightarrow}
y_1\overset{(\beta_2,h_2)}{\longrightarrow} \cdots
\overset{(\beta_m,h_m)}{\longrightarrow} y_m=y.\]
The factorizations imply $g_n\cdots g_1=g=h_m\cdots h_1$ in $G$. Moreover, the morphism $\alpha\colon g(x)\rightarrow y$ has two factorizations in $\mathcal{C}$:
\[g(x)\overset{g_n\cdots g_2(\alpha_1)}{\longrightarrow}
g_n\cdots g_2(x_1)\overset{g_n\cdots g_3(\alpha_2)}{\longrightarrow} \cdots
\overset{g_n(\alpha_{n-1})}{\longrightarrow} g_n(x_{n-1})\overset{\alpha_n}{\longrightarrow} x_n=y\] and
\[g(x)\overset{h_m\cdots h_2(\beta_1)}{\longrightarrow}
h_m\cdots h_2(y_1)\overset{h_m\cdots h_3(\beta_2)}{\longrightarrow} \cdots
\overset{h_m(\beta_{m-1})}{\longrightarrow} h_m(y_{m-1})\overset{\beta_m}{\longrightarrow} y_m=y.\]
Here, in the first factorization we use $g(x)=g_n\cdots g_2g_1(x_0)$, and in the second one we use $g(x)=h_m\cdots h_2h_1(x_0)$. By Lemmas~\ref{lem:unf-iso} and \ref{lem:unf}, all the morphisms appearing in the above two factorizations are unfactorizable in $\mathcal{C}$.

Since $\alpha$ satisfies the UFP, we infer that $n=m$, and that there are isomorphisms $\theta_i: g_n\cdots g_{i+1}(x_i)\rightarrow h_n\cdots h_{i+1}(y_{i})$, $1\leq i\leq n-1$, such that the following diagram in $\mathcal{C}$ commutes.
\begin{equation*}
\xymatrix@C=0.9cm{
	g(x)\ar[rr]^-{g_n\cdots g_2(\alpha_1)} \ar@{=}[d] && g_n\cdots g_2(x_1) \ar[rr]^-{g_n\cdots g_3(\alpha_2)} \ar[d]_{\theta_1}  &&  \cdots \ar[r]^-{g_n(\alpha_{n-1})}  & g_n(x_{n-1}) \ar[r]^{\alpha_n} \ar[d]^{\theta_{n-1}}& x_n=y \ar@{=}[d] \\
	g(x)\ar[rr]^-{h_n\cdots h_2(\beta_1)} && h_n\cdots h_2(y_1) \ar[rr]^-{h_n\cdots h_3(\beta_2)} && \cdots \ar[r]^-{h_n(\beta_{n-1})}  & h_n(y_{n-1}) \ar[r]^{\beta_n}& y_n=y
}
\end{equation*}
Set $\theta_0={\rm Id}_{g(x)}$ and $\theta_n={\rm Id}_y$. Then the above commutativity implies that
 \begin{align}\label{equ:comm-dia}
 \theta_{i}\circ g_n\cdots g_{i+1}(\alpha_i)=h_n\cdots h_{i+1}(\beta_i)\circ \theta_{i-1}
   \end{align}
   for each $1\leq i\leq n$.  The identity for the case $i=n$ means $\alpha_n=\beta_n\circ \theta_{n-1}$.

For each $1\leq i\leq n-1$, we set $a_{i}=h_{i+1}^{-1}\cdots h_n^{-1}g_n\cdots g_{i+1}$. In addition, we set $a_0=1_G=a_n$. Then we have
\begin{align}\label{equ:a}
a_ig_i=h_ia_{i-1}
\end{align}
for each $1\leq i\leq n$. Here, to see $a_1g_1=h_1$, we use the fact that $g_n\cdots g_1=h_n\cdots h_1$.

For each $1\leq i\leq n$, we set $\eta_{i}=h_{i+1}^{-1}\cdots h_n^{-1}(\theta_{i})$. We observe $\eta_0={\rm Id}_x$ and $\eta_n={\rm Id}_y$.  Then each $\eta_i\colon a_i(x_i)\rightarrow y_i$ is an isomorphism in $\mathcal{C}$. Consequently, by Lemma~\ref{lem:facts}(1) we have an isomorphism $(\eta_i, a_i)\colon x_i\rightarrow y_i$ in $\mathcal{C}\rtimes G$.

Applying $h_{i+1}^{-1}\cdots h_n^{-1}$ to (\ref{equ:comm-dia}), we have
\begin{align}\label{equ:aa}
\eta_i\circ  a_i(\alpha_i)=\beta_i\circ h_i(\eta_{i-1})
\end{align}
for each $1\leq i\leq n$. By (\ref{equ:a}) and (\ref{equ:aa}), we  have the following commutative diagram in $\mathcal{C}\rtimes G$.
\begin{equation*}
\xymatrix@C=1.0cm{
	x=x_0\ar[r]^-{(\alpha_1,g_1)} \ar@{=}[d] & x_1 \ar[r]^-{(\alpha_2,g_2)} \ar[d]_{(\eta_1,a_1)} & x_2 \ar[r]^{} \ar[d]^{(\eta_2,a_2)} &  \cdots \ar[r]^{(\alpha_{n-1},g_{n-1})}  & x_{n-1} \ar[r]^-{(\alpha_n,g_n)} \ar[d]^{(\eta_{n-1},a_{n-1})}& x_n=y \ar@{=}[d] \\
	x=y_0\ar[r]^-{(\beta_1,h_1)} & y_1 \ar[r]^-{(\beta_2,h_2)} & y_2 \ar[r]^{} & \cdots \ar[r]^{(\beta_{n-1},h_{n-1})}  & y_{n-1} \ar[r]^-{(\beta_n,h_n)}& y_n=y
}
\end{equation*}
This implies that the morphism $(\alpha,g)$ satisfies the UFP.

The proof of the ``only if"  is similar and actually easier. We assume that $(\alpha,g)\colon x\rightarrow y$ satisfies the UFP in $\mathcal{C}\rtimes G$. Suppose that $\alpha\colon g(x)\rightarrow y$ has two factorizations into unfactorizable morphisms in $\mathcal{C}$:
\[ g(x)=x_0\overset{\alpha_1}{\longrightarrow}
x_1\overset{\alpha_2}{\longrightarrow} \cdots
\overset{\alpha_n}{\longrightarrow} x_n=y\] and
\[g(x)=y_0\overset{\beta_1}{\longrightarrow}
y_1\overset{\beta_2}{\longrightarrow} \cdots
\overset{\beta_m}{\longrightarrow} y_m=y.\]
Then the morphism $(\alpha,g)\colon x\rightarrow y$ in $\mathcal{C}\rtimes G$ has two factorizations:
\[ x=g^{-1}(x_0)\overset{(\alpha_1,g)}{\longrightarrow}
x_1\overset{(\alpha_2,1_G)}{\longrightarrow} \cdots
\overset{(\alpha_n,1_G)}{\longrightarrow} x_n=y\] and
\[x=g^{-1}(y_0)\overset{(\beta_1,g)}{\longrightarrow}
y_1\overset{(\beta_2,1_G)}{\longrightarrow} \cdots
\overset{(\beta_m,1_G)}{\longrightarrow} y_m=y.\]
By Lemma~\ref{lem:unf}, all the morphisms $(\alpha_1,g)$, $(\beta_1,g)$, $(\alpha_i,1_G)$ and $(\beta_j, 1_G)$ are unfactorizable, for $2\leq i\leq n$ and $2\leq j\leq m$.

Since the morphism $(\alpha,g)$ satisfies the UFP, then $m=n$ and there are isomorphisms $(\gamma_i, g_i)\colon x_i\rightarrow y_i$, $1\leq i\leq n-1$, such that the following diagram in $\mathcal{C}\rtimes G$ commutes.
\begin{equation*}
\xymatrix@C=1.0cm{
	x=g^{-1}(x_0)\ar[r]^-{(\alpha_1,g)} \ar@{=}[d] & x_1 \ar[r]^{(\alpha_2,1_G)} \ar[d]_{(\gamma_1,g_1)} & x_2 \ar[r]^{} \ar[d]^{(\gamma_2,g_2)} &  \cdots \ar[r]^-{(\alpha_{n-1}, 1_G)}  & x_{n-1} \ar[r]^-{(\alpha_n,1_G)} \ar[d]^{(\gamma_{n-1},g_{n-1})}& x_n=y \ar@{=}[d] \\
	x=g^{-1}(y_0)\ar[r]^-{(\beta_1,g)} & y_1 \ar[r]^{(\beta_2,1_G)} & y_2 \ar[r]^{} & \cdots \ar[r]^-{(\beta_{n-1}, 1_G)}  & y_{n-1} \ar[r]^-{(\beta_n,1_G)}& y_n=y
}
\end{equation*}
The commutativity implies  $g_1=g_2=\cdots=g_{n-1}=1_G$. By Lemma~\ref{lem:facts}(1), each $\gamma_i\colon x_i\rightarrow y_i$ is an isomorphism in $\mathcal{C}$.  Consequently, the isomorphisms $\gamma_i$ make the following diagram in $\mathcal{C}$ commute.
\begin{equation*}
\xymatrix@C=0.8cm{
	g(x)=x_0\ar[r]^-{\alpha_1} \ar@{=}[d] & x_1 \ar[r]^{\alpha_2} \ar[d]_{\gamma_1} & x_2 \ar[r]^{} \ar[d]^{\gamma_2} &  \cdots \ar[r]^-{\alpha_{n-1}}  & x_{n-1} \ar[r]^-{\alpha_n} \ar[d]^{\gamma_{n-1}}& x_n=y \ar@{=}[d] \\
	g(x)=y_0\ar[r]^-{\beta_1} & y_1 \ar[r]^{\beta_2} & y_2 \ar[r]^{} & \cdots \ar[r]^-{\beta_{n-1}}  & y_{n-1} \ar[r]^-{\beta_n}& y_n=y
}
\end{equation*}
This proves that  $\alpha$ satisfies the UFP, as required.
\end{proof}

Recall that  a finite EI category $\mathcal{C}$ is \emph{free}  provided that  each morphism satisfies the UFP; compare~\cite[Definition~2.7 and Proposition~2.8]{L2011}. For an alternative characterization of a free EI category, we refer to \cite[Proposition~4.5]{Wang}.

The following result follows immediately  from Propositions~\ref{prop:skew-EI} and \ref{prop:UFP}.

\begin{prop}\label{prop:Free}
Let $\mathcal{C}$ be a finite  $G$-category. Then $\mathcal{C}$ is a free EI category  if and only if so is $\mathcal{C}\rtimes G$. \hfill $\square$
\end{prop}

\begin{rem}
Let us sketch a shorter proof of Proposition~\ref{prop:Free} using category algebras. By Proposition ~\ref{prop:skew-EI}, we may assume that both $\mathcal{C}$ and $\mathcal{C}\rtimes G$ are EI categories.

Take an arbitrary field $\mathbb{K}$ of characteristic zero. By Proposition~\ref{prop:skew}, we identify the category algebra  $\mathbb{K}(\mathcal{C}\rtimes G)$ with the skew group algebra $\mathbb{K}\mathcal{C}\# G$. It is well known that $\mathbb{K}\mathcal{C}$ is hereditary if and only if so is $\mathbb{K}\mathcal{C}\#G$; see \cite[Theorems~1.3(c) and 1.4]{RR}. Then Proposition~\ref{prop:Free} follows immediately from the following result due to  \cite[Theorem~5.3]{L2011}: the EI category $\mathcal{C}$ (\emph{resp. }$\mathcal{C}\rtimes G$) is free if and only if the corresponding category algebra $\mathbb{K}\mathcal{C}$ (\emph{resp.} $\mathbb{K}(\mathcal{C}\rtimes G)$) is hereditary.
\end{rem}

\section{Finite EI quivers and $G$-actions}

In this section, we recall basic facts on finite EI quivers. We prove a universal property of the free EI category associated to a finite EI quiver; see Proposition~\ref{prop:universal}. We study finite group actions on finite EI quivers.

\subsection{Categories associated to finite EI quivers} Let $Q=(Q_0, Q_1; s,t)$ be a finite quiver, where $Q_0$ and $Q_1$ are the finite sets of vertices and arrows, respectively. The maps $s, t\colon Q_1\rightarrow Q_0$ assign to each arrow $\alpha$ its starting vertex $s(\alpha)$ and terminating vertex $t(\alpha)$, respectively.

A path $p=\alpha_n\cdots \alpha_2\alpha_1$ of length $n$ in $Q$ consists of arrows $\alpha_i$ satisfying $t(\alpha_i)=s(\alpha_{i+1})$ for each $1\leq i\leq n-1$. Here, we write concatenation from right to left. We set $s(p)=s(\alpha_1)$ and $t(p)=t(\alpha_n)$. An arrow is identified with a path of length one. To each vertex $i\in Q_0$, we associate a trivial path $e_i$ of length zero, satisfying $s(e_i)=i=t(e_i)$.

 A finite  quiver $Q$ is said to be \emph{acyclic}, provided that there is no oriented cycle in $Q$, that is, there is no nontrivial path with the same starting and terminating vertex. This is equivalent to the condition that there are only finitely many paths in $Q$.

Let $H$ be a finite group, and let $X$ be a right $H$-set, that is, $H$ acts on $X$ on the right. Let $Y$ be a left $H$-set.  The \emph{biset product} $X\times_H Y$ is defined to be the set
$$X\times Y/\sim$$
of equivalence classes with respect to the equivalence relation $\sim$ given by $(x.h, y)\sim(x, h.y)$ for $x\in X, h\in H$ and $y\in Y$. By abuse of notation, the elements in $X\times Y/\sim$ are still denoted by $(x,y)$ for $x\in X$ and $y\in Y$.

 Let $G$ and $K$ be finite groups. By a $(G, H)$-biset $X$, we mean a  set $X$ which is a left $G$-set and a right $H$-set satisfying $(g.x).h=g.(x.h)$ for any $g\in G$, $x\in X$ and $h\in H$. Here, we use the dot to denote the group actions. Let $Y$ be a $(H, K)$-biset.  Then the biset product $X\times_H Y$ is naturally a $(G, K)$-biset.

\begin{exm}\label{exm:hom-biset}
Let $\mathcal{C}$ be a finite EI category. For any two objects $x$ and $y$, the ${\rm Hom}$-set ${\rm Hom}_\mathcal{C}(x, y)$ is naturally an $({\rm Aut}_\mathcal{C}(y), {\rm Aut}_\mathcal{C}(x))$-biset, where the actions are given by the composition of morphisms in $\mathcal{C}$.

Denote by ${\rm Hom}^0_\mathcal{C}(x, y)$ the subset of ${\rm Hom}_\mathcal{C}(x, y)$ consisting of unfactorizable morphisms. As unfactorizable morphisms are closed under composition with isomorphisms, ${\rm Hom}^0_\mathcal{C}(x, y)$ is an $({\rm Aut}_\mathcal{C}(y), {\rm Aut}_\mathcal{C}(x))$-sub-biset of ${\rm Hom}_\mathcal{C}(x, y)$.
\end{exm}

Recall from \cite[Definition~2.1]{L2011} that  a \emph{finite EI quiver} $(Q, U)$ consists of a finite acyclic quiver $Q$ and an assignment $U=(U(i), U(\alpha))_{i\in Q_0, \alpha\in Q_1}$. In more details, for each vertex $i\in Q_0$, $U(i)$ is a finite group,  and for each arrow $\alpha\in Q_1$, $U(\alpha)$ is a finite $(U(t\alpha), U(s\alpha))$-biset. Here, we emphasize that each $U(\alpha)$ is nonempty.

For any path $p=\alpha_n\cdots \alpha_2\alpha_1$ in $Q$, we define
$$U(p)=U({\alpha_n}) \times_{U({t\alpha_{n-1}})} U({\alpha_{n-1}}) \times_{U({t\alpha_{n-2}})} \cdots \times_{U({t\alpha_2})} U({\alpha_2})\times_{U({t\alpha_1})} U({\alpha_1})
.$$
Then $U(p)$ is naturally a $(U(tp), U(sp))$-biset. A typical element in $U(p)$ will be denoted by $(u_n, \cdots, u_2, u_1)$ with each $u_i\in U(\alpha_i)$. For each vertex $i\in Q_0$, we identify $U(e_i)$ with $U(i)$.

For two paths $p, q$ satisfying $s(p)=t(q)$, we have a natural isomorphism of $(U(tp), U(sq))$-bisets
\begin{align}\label{iso:1}
U(p)\times_{U(tq)} U(q)\stackrel{\sim}\longrightarrow U(pq),
\end{align}
sending $((u_m',\cdots,  u_1'), (u_n, \cdots, u_1))$ to $(u_m', \cdots, u_1', u_n, \cdots, u_1)$,
where $pq$ denotes the concatenation of paths.

Each finite EI quiver  $(Q, U)$ gives  rise to a finite EI category $\mathcal{C}(Q, U)$; see \cite[Section~2]{L2011}. The objects of $\mathcal{C}(Q, U)$ coincide with the vertices of $Q$. For two objects $i$ and $j$, we have a disjoint union
$${\rm Hom}_{\mathcal{C}(Q, U)}(i, j)=\bigsqcup_{\{p {\rm{ \; paths \; in\;  }} Q {\rm{\; with \; }} s(p)=i  {\rm{\; and \; }} t(p)=j\}} U(p).$$
The composition of morphisms is induced by the concatenation of paths and the isomorphism (\ref{iso:1}).
Since $Q$ has only finitely many paths, we infer that $\mathcal{C}(Q, U)$ is a finite category.  As $e_i$ is the only path starting and terminating at $i$, we infer that
\begin{align}\label{equ:unf-free1}
{\rm Hom}_{\mathcal{C}(Q, U)}(i, i)=U(e_i)=U(i),
\end{align}
which is a finite group. We conclude that the category  $\mathcal{C}(Q, U)$ is indeed finite EI. We mention the following immediate fact
\begin{align}\label{equ:unf-free}
{\rm Hom}^0_{\mathcal{C}(Q, U)}(i, j)=\bigsqcup_{\{\alpha\in Q_1\mid s(\alpha)=i, \; t(\alpha)=j\}} U(\alpha).
\end{align}

By \cite[Proposition~2.8]{L2011}, the EI category $\mathcal{C}(Q, U)$ is free. Moreover,  a finite EI category is free if and only if it is equivalent to  $\mathcal{C}(Q, U)$ for some finite EI quiver $(Q, U)$.

\subsection{A universal property} The free EI category $\mathcal{C}(Q, U)$ enjoys a certain universal property; compare \cite[Proposition~2.9]{L2011}.

\begin{prop}\label{prop:universal}
Let $\mathcal{D}$ be a finite EI category. Assume that $\phi\colon Q_0\rightarrow {\rm Obj}(\mathcal{D})$ is a map, $\psi_i\colon U(i)\rightarrow {\rm Aut}_{\mathcal{D}}(\phi(i))$ is a group homomorphism for each vertex $i\in Q_0$, and that $\psi_\alpha\colon U(\alpha)\rightarrow {\rm Hom}_\mathcal{D}(\phi(s\alpha), \phi(t\alpha))$ is a map of $(U(t\alpha), U(s\alpha))$-bisets for each arrow $\alpha\in Q_1$.  Then there is a unique functor $\Phi\colon \mathcal{C}(Q, U)\rightarrow \mathcal{D}$ subject to the following constraints:
\begin{enumerate}
\item $\Phi(i)=\phi(i)$ for each $i\in Q_0={\rm Obj}(\mathcal{C}(Q, U))$;
\item $\Phi(x)=\psi_i(x)$ for each $x\in U(i)={\rm Aut}_{\mathcal{C}(Q, U)}(i)$;
\item $\Phi(u)=\psi_\alpha(u)$ for each $u\in U(\alpha)\subseteq {\rm Hom}^0_{\mathcal{C}(Q, U)}(s(\alpha), t(\alpha))$.
\end{enumerate}

Moreover, $\Phi$ is an equivalence of categories if and only if all the following conditions are satisfied:
\begin{enumerate}
\item[(E1)] The finite EI category $\mathcal{D}$ is free;
\item[(E2)] Whenever $\phi(i)$ and $\phi(j)$ are isomorphic in $\mathcal{D}$, we have $i=j$;
\item[(E3)] Every object in $\mathcal{D}$ is isomorphic to $\phi(i)$ for some $i\in Q_0$;
\item[(E4)] Each $\psi_i$ is an isomorphism, and the maps $\psi_\alpha$ induce a bijection, for any $i, j\in Q_0$,
    $$
    \bigsqcup_{\{\alpha\in Q_1\mid s(\alpha)=i, t(\alpha)=j\}} U(\alpha)\longrightarrow {\rm Hom}_\mathcal{D}^0(\phi(i), \phi(j)).
    $$
\end{enumerate}
\end{prop}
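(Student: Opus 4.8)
The plan is to construct $\Phi$ explicitly on morphisms, check it is a well-defined functor and is the unique one with properties (1)--(3), and then analyse when it is an equivalence by playing the unique factorization property (UFP) in $\mathcal{C}(Q,U)$ off against the one in $\mathcal{D}$.

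For existence, $\Phi(i)=\phi(i)$ on objects is forced by (1). Every morphism $\mu\colon i\to j$ of $\mathcal{C}(Q,U)$ lies in a unique $U(p)$; if $p=e_i$ I put $\Phi(\mu)=\psi_i(\mu)$, and if $p=\alpha_n\cdots\alpha_1$ is nontrivial I write $\mu=(u_n,\dots,u_1)$ with $u_k\in U(\alpha_k)$ and put $\Phi(\mu)=\psi_{\alpha_n}(u_n)\circ\cdots\circ\psi_{\alpha_1}(u_1)$. The first point to verify is that this is independent of the chosen representative of the class in the iterated biset product $U(p)$: the generating relation $(\dots,u_{k+1}g,u_k,\dots)\sim(\dots,u_{k+1},gu_k,\dots)$ with $g\in U(t\alpha_k)=U(s\alpha_{k+1})$ is carried by $\Phi$ to the equality obtained by inserting $\psi_{t\alpha_k}(g)$ versus $\psi_{s\alpha_{k+1}}(g)$ at the same spot, and these agree precisely because each $\psi_\alpha$ is a map of $(U(t\alpha),U(s\alpha))$-bisets intertwined through the $\psi_i$. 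Functoriality then follows from the compatibility (\ref{iso:1}) of biset products with concatenation of paths, from $\psi_i$ being a homomorphism (for identities), and from $\psi_\alpha$ being a biset map (for composing with automorphisms). Uniqueness is immediate: in $\mathcal{C}(Q,U)$ every morphism is a composite of automorphisms $x\in U(i)$ and unfactorizable morphisms $u\in U(\alpha)$, since $(u_n,\dots,u_1)=u_n\circ\cdots\circ u_1$, and $\Phi$ is prescribed on all of these by (2) and (3).

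For the ``moreover'' statement, the direction ($\Rightarrow$) is routine. If $\Phi$ is an equivalence, then $\mathcal{D}\simeq\mathcal{C}(Q,U)$ is a free EI category, so (E1) holds; $\Phi$ reflects isomorphisms and distinct objects of $\mathcal{C}(Q,U)$ are non-isomorphic because $Q$ is acyclic, so (E2) holds; essential surjectivity of $\Phi$ is (E3); and $\Phi$ being fully faithful, together with (\ref{equ:unf-free}) and the fact that equivalences preserve and reflect unfactorizability, yields (E4) after restricting to $U(i)={\rm Hom}_{\mathcal{C}(Q,U)}(i,i)$ and to the unfactorizable parts. The substantial direction is ($\Leftarrow$). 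Essential surjectivity of $\Phi$ is exactly (E3). For fullness, take $f\colon\phi(i)\to\phi(j)$; if $f$ is an isomorphism, (E2) forces $i=j$ and (E4) gives $f=\psi_i(x)=\Phi(x)$; otherwise, $\mathcal{D}$ being EI, factor $f$ into unfactorizables, use (E3) to move the intermediate objects into the image of $\phi$ by conjugating with isomorphisms, and use the bijection in (E4) to write each unfactorizable factor as $\psi_{\alpha_k}(u_k)$; then $(u_n,\dots,u_1)\in U(\alpha_n\cdots\alpha_1)$ maps to $f$. For faithfulness, suppose $\Phi(\mu)=\Phi(\mu')$ with $\mu\in U(p)$, $\mu'\in U(p')$; the cases where one of $p,p'$ is trivial are settled by injectivity of $\psi_i$ from (E4) plus the standard fact that in an EI category a composite of non-isomorphisms is a non-isomorphism. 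When both are nontrivial, $\Phi(\mu)$ and $\Phi(\mu')$ are two factorizations of the same morphism into unfactorizables in $\mathcal{D}$, so the UFP in $\mathcal{D}$ (available by (E1)) supplies connecting isomorphisms $\gamma_k$ between the intermediate objects; by (E2) these objects coincide, so the $\gamma_k$ are automorphisms $\psi_{i_k}(g_k)$ by (E4), with $g_0=g_n=1$, and the compatibility squares turn into $\psi_{\alpha_k}(g_ku_k)=\psi_{\beta_k}(v_kg_{k-1})$. Injectivity in (E4) then forces $\alpha_k=\beta_k$ (hence $p=p'$) and $g_ku_k=v_kg_{k-1}$; feeding the relations $v_k=g_ku_kg_{k-1}^{-1}$ back into the biset product $U(p)$ and absorbing the $g_k$ across the $\sim$-relations collapses $\mu'$ to $\mu$.

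The main obstacle I anticipate is the combinatorial bookkeeping in the faithfulness argument: keeping track of the chain of connecting isomorphisms produced by the UFP in $\mathcal{D}$, converting them into group elements $g_k$ via (E4), and checking that the resulting relations really identify the two representatives inside the iterated biset product. The well-definedness of $\Phi$ on morphisms is the other delicate point, but it reduces cleanly to the biset-map hypotheses once the notation is fixed.
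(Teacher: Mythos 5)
Your proposal is correct and follows essentially the same route as the paper's proof: the same explicit definition of $\Phi$ on elements of $U(p)$ with well-definedness reduced to the biset-map hypotheses, the same uniqueness argument via unfactorizable generators, and for the ``if'' direction the same scheme of denseness from (E3), fullness from factorization into unfactorizables plus (E4), and faithfulness from the UFP in $\mathcal{D}$ combined with (E2) and (E4) to collapse the two representatives inside the iterated biset product.
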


Before giving the proof, we leave two comments to clarify the statements. The  $(U(t\alpha), U(s\alpha))$-biset structure on ${\rm Hom}_\mathcal{D}(\phi(s\alpha), \phi(t\alpha))$ is given as follows: for a morphism $f\colon \phi(s\alpha)\rightarrow \phi(t\alpha)$ in $\mathcal{D}$, $x\in U(t\alpha)$ and $x'\in U(s\alpha)$, we have
$$x.f.x'=\psi_{t(\alpha)}(x)\circ f\circ \psi_{s(\alpha)}(x').$$
In the condition (E4), the domain of the bijection is a disjoint union; moreover, it implies that $\psi_\alpha(u)$ is unfactorizable in $\mathcal{D}$ for any $u\in U(\alpha)$.

\begin{proof}
Set $\mathcal{C}=\mathcal{C}(Q, U)$. For any path $p=\alpha_n\cdots \alpha_2\alpha_1$ in $Q$ and an element $(u_n, \cdots, u_2, u_1)\in U(p)$, we define
$$\Phi(u_n, \cdots, u_2, u_1)=\psi_{\alpha_n}(u_n)\circ \cdots \circ \psi_{\alpha_2}(u_2)\circ \psi_{\alpha_1}(u_1).$$
We claim that this is independent of the choice of the representatives; compare \cite[the proof of Proposition~2.9]{L2011}.

Assume that $(u_n, \cdots, u_2, u_1)=(v_n, \cdots, v_2, v_1)$ in $U(p)$. This means that there are elements $x_i\in U(t\alpha_i)$ for each $1\leq i\leq n-1$,  such that the following identities hold:
$$v_n=u_n.x_{n-1}, \; v_{i}=x_i^{-1}.u_i.x_{i-1}, \mbox{ and } v_1=x_1^{-1}.u_1.$$
Since each $\psi_{\alpha_i}$ is a map of bisets, we have
$$\psi_{\alpha_n}(v_n)=\psi_{\alpha_n}(u_n)\circ \psi_{t(\alpha_{n-1})}(x_{n-1}),\;  \psi_{\alpha_i}(v_{i})=\psi_{t(\alpha_i)}(x_i)^{-1}\circ \psi_{\alpha_i}(u_i)\circ \psi_{t(\alpha_{i-1})}(x_{i-1}), $$
and $$\psi_{\alpha_1}(v_1)=\psi_{t(\alpha_1)}(x_1)^{-1}\circ \psi_{\alpha_1}(u_1).$$
Then the following identity follows immediately.
\begin{align*}
&\Phi(v_n, \cdots, v_2,v_1)=\psi_{\alpha_n}(v_n)\circ \cdots \circ \psi_{\alpha_2}(v_2)\circ \psi_{\alpha_1}(v_1)\\
&=\psi_{\alpha_n}(u_n) \psi_{t(\alpha_{n-1})}(x_{n-1})\circ \cdots \circ \psi_{t(\alpha_2)}(x_2)^{-1} \psi_{\alpha_2}(u_2)\psi_{t(\alpha_{1})}(x_{1})\circ \psi_{t(\alpha_1)}(x_1)^{-1} \psi_{\alpha_1}(u_1)\\
&=\psi_{\alpha_n}(u_n)\circ \cdots \circ \psi_{\alpha_2}(u_2)\circ \psi_{\alpha_1}(u_1)=\Phi(u_n, \cdots, u_2, u_1).
\end{align*}

The above claim  yields a well-defined functor $\Phi$. The uniqueness of $\Phi$ is clear, as $(u_n, \cdots, u_2, u_1)$ might be viewed as the composition $u_n\circ \cdots \circ u_2\circ u_1$ in $\mathcal{C}$.

For the ``only if" part of the second statement, we assume that $\Phi$ is an equivalence. Then (E1) is clear, since $\mathcal{C}$ is free. Since  $\mathcal{C}$ is skeletal and $\Phi$ respects isomorphism classes, (E2) follows immediately. The condition (E3) is just the denseness of $\Phi$. For (E4), we observe that the equivalence $\Phi$ necessarily induces isomorphisms
$${\rm Aut}_\mathcal{C}(i)\simeq {\rm Aut}_\mathcal{D}(\phi(i))$$
 of groups and bijections
$${\rm Hom}_\mathcal{C}^0(i, j)\simeq {\rm Hom}_\mathcal{D}^0(\phi(i), \phi(j))$$
between the sets of  unfactorizable morphisms. Then we apply (\ref{equ:unf-free1}) and (\ref{equ:unf-free}).

For the ``if" part, we assume the conditions (E1)-(E4).  By (E3), the functor $\Phi$ is dense. It suffices to prove that for any $i, j\in Q_0$, the following map
$$\Phi_{i,j}\colon {\rm Hom}_\mathcal{C}(i, j)\longrightarrow {\rm Hom}_\mathcal{D}(\phi(i), \phi(j)),\quad f\mapsto \Phi(f)$$
is bijective.

By (E4), each $\psi_i$ is an isomorphism, and then the case $i=j$ follows. We now assume that $i\neq j$. Then by (E2), $\phi(i)$ and $\phi(j)$ are not isomorphic.

Recall from \cite[Proposition~2.6]{L2011} that each morphism in $\mathcal{D}$ has a factorization into unfactorizable morphisms. Since $\Phi$ is dense, any morphism $g\colon \phi(i)\rightarrow \phi(j)$ admits a factorization
$$\phi(i)\stackrel{g_0}\longrightarrow \phi(i_1) \stackrel{g_1}\longrightarrow \phi(i_2) \longrightarrow \cdots \longrightarrow \phi(i_{n-1}) \stackrel{g_{n-1}} \longrightarrow \phi(j)$$
with each $g_k$ unfactorizable. By (E4), each $g_k$ belongs to the image of $\Phi$. It follows that there is a morphism $f\colon i\rightarrow j$ in $\mathcal{C}$ satisfying $\Phi(f)=g$. This proves that $\Phi_{i,j}$ is surjective.

It remains to show that $\Phi_{i,j}$ is injective. Assume that $p=\alpha_n\cdots\alpha_2\alpha_1$ and $q=\beta_m\cdots \beta_2\beta_1$ are two paths from $i$ to $j$, and that $(u_n, \cdots, u_2, u_1)\in U(p)$ and $(v_m, \cdots, v_2, v_1)\in U(q)$ satisfy
$$\Phi(u_n, \cdots, u_2, u_1)=\Phi(v_m, \cdots, v_2, v_1)=g'.$$
We claim that $p=q$ and $(u_n, \cdots, u_2, u_1)=(v_m, \cdots, v_2, v_1)$. Then we are done.

For the claim, we observe that  the morphism $g'$ admits two factorizations:
$$\phi(i)\stackrel{\psi_{\alpha_1}(u_1)}\longrightarrow \phi(i_1) \stackrel{\psi_{\alpha_2}(u_2)}\longrightarrow \phi(i_2) \longrightarrow \cdots \longrightarrow \phi(i_{n-1}) \stackrel{\psi_{\alpha_n}(u_n)}\longrightarrow \phi(j)$$
and
$$\phi(i)\stackrel{\psi_{\beta_1}(v_1)}\longrightarrow \phi(j_1) \stackrel{\psi_{\beta_2}(v_2)}\longrightarrow \phi(j_2) \longrightarrow \cdots \longrightarrow \phi(j_{m-1}) \stackrel{\psi_{\beta_m}(v_m)}\longrightarrow \phi(j).$$
Here, $i_k=t(\alpha_k)$ and $j_k=t(\beta_k)$. By (E4), all the morphisms appearing in the two factorizations are unfactorizable. By (E1), the EI category $\mathcal{D}$ is free, that is, any morphism satisfies the UFP. Consequently, $m=n$ and there are isomorphisms $g'_k\colon \phi(i_k)\rightarrow \phi(j_k)$ making the following diagram commute.
\[
\xymatrix{
\phi(i)\ar@{=}[d]  \ar[r]^-{\psi_{\alpha_1}(u_1)} & \phi(i_1) \ar[d]_-{g'_1} \ar[r]^-{\psi_{\alpha_2}(u_2)} & \phi(i_2) \ar[d]^-{g'_2} \ar[r] & \cdots \ar[r] & \phi(i_{n-1}) \ar[d]^-{g'_{n-1}} \ar[r]^-{\psi_{\alpha_n}(u_n)} & \phi(j) \ar@{=}[d]\\
\phi(i) \ar[r]^-{\psi_{\beta_1}(v_1)} & \phi(j_1) \ar[r]^-{\psi_{\beta_2}(v_2)} & \phi(j_2) \ar[r] & \cdots \ar[r] & \phi(j_{n-1}) \ar[r]^-{\psi_{\beta_n}(v_n)} & \phi(j)
}\]
By (E2), we infer that $i_k=j_k$; moreover, by (E4) we obtain automorphisms $a_k\in {\rm Aut}_\mathcal{C}(i_k)=U(i_k)$ satisfying $\psi_{i_k}(a_k)=g'_k$. The commutativity yields
$$\psi_{\beta_{k+1}}(v_{k+1})\circ \psi_{i_k}(a_k)=\psi_{i_{k+1}}(a_{k+1})\circ \psi_{\alpha_{k+1}}(u_{k+1})$$
for each $0\leq k\leq n-1$. Here, $a_0$ and $a_n$ are the identity elements in $U(i)$ and $U(j)$, respectively. The above identity is equivalent to
$$\psi_{\beta_{k+1}}(v_{k+1}.a_k)=\psi_{\alpha_{k+1}}(a_{k+1}.u_{k+1}).$$
By the bijection in (E4), we infer that $\beta_{k+1}=\alpha_{k+1}$ and that
$$v_{k+1}.a_k=a_{k+1}.u_{k+1}$$
for each $0\leq k\leq n-1$. It follows that $p=q$; moreover, in view of the definition of $U(p)$ via biset products, we infer that $(u_n, \cdots, u_2, u_1)=(v_n, \cdots, v_2, v_1)$, proving the claim.
\end{proof}

\subsection{$G$-actions on finite EI quivers}

Let $(Q,U)$ be a finite EI quiver. An \emph{automorphism} $\sigma=(\sigma^0,\sigma^1)$ of $(Q,U)$ consists of an automorphism $\sigma^0\colon Q\rightarrow Q$ of the acyclic quiver $Q$ and an assignment $\sigma^1=(\sigma^1_i,\sigma^1_\alpha)_{i\in Q_0, \alpha\in Q_1}$ of isomorphisms. More precisely, for each $i\in Q_0$,
$$\sigma^1_i\colon U(i)\overset{\sim}{\longrightarrow} U(\sigma^0(i))$$
 is an isomorphism of groups; for each arrow $\alpha\in Q_1$,  $$\sigma^1_\alpha \colon U(\alpha)\overset{\sim}{\longrightarrow} U(\sigma^0(\alpha))$$
  is an isomorphism of  $(U(t\alpha), U(s\alpha))$-bisets. Here, the $(U(t\alpha), U(s\alpha))$-biset structure on $U(\sigma^0(\alpha))$ is induced by the group isomorphisms $\sigma^1_{t(\alpha)}$ and $\sigma^1_{s(\alpha)}$.

The composition of two automorphisms $\sigma=(\sigma^0,\sigma^1)$ and $\theta=(\theta^0,\theta^1)$ on $(Q,U)$ is given by
$$ \theta\circ \sigma =(\theta^0\circ \sigma^0,\theta^1\star\sigma^1),$$
where the assignment $\theta^1\star\sigma^1$ is given by
$$(\theta^1\star\sigma^1)_i=\theta^1_{\sigma^0(i)}\circ \sigma^1_i \mbox{ and } (\theta^1\star\sigma^1)_\alpha=\theta^1_{\sigma^0(\alpha)} \circ \sigma^1_\alpha.$$
We denote by ${\rm Aut}(Q,U)$ the group of automorphisms of $(Q,U)$, whose multiplication is given by the composition of automorphisms.

We observe that each automorphism $\sigma=(\sigma^0, \sigma^1)$ on $(Q, U)$ induces an automorphism $\tilde{\sigma}$ on $\mathcal{C}(Q, U)$ in the following natural manner: the action of $\tilde{\sigma}$ on objects is given by $\sigma^0$; for $u\in U(i)$, we have $\tilde{\sigma}(u)=\sigma^1_i(u)\in U(\sigma^0(i))$;  for a path $p=\alpha_n\cdots \alpha_2\alpha_1$ and a morphism $(u_n, \cdots, u_2, u_1)\in U(p)$, we have
$$\tilde{\sigma}(u_n, \cdots, u_2, u_1)=(\sigma^1_{\alpha_n}(u_n), \cdots, \sigma^1_{\alpha_2}(u_2), \sigma^1_{\alpha_1}(u_1))\in U(\sigma^0(p)).$$
This actually gives rise to an injective group homomorphism
\begin{align}\label{homo:Aut}
{\rm Aut}(Q, U)\hookrightarrow {\rm Aut}(\mathcal{C}(Q, U)), \quad \sigma\mapsto \tilde{\sigma}.
\end{align}

Let $G$ be a finite group. By a \emph{$G$-action} on a finite EI quiver $(Q,U)$, we mean  a group homomorphism
$$\rho\colon G\longrightarrow {\rm Aut}(Q,U),\quad g\mapsto \rho(g)=(\rho(g)^0,\rho(g)^1).$$
Composing $\rho$ with (\ref{homo:Aut}),  the $G$-action makes $\mathcal{C}(Q, U)$ into a $G$-category.

The following convention for the $G$-action $\rho$ will simplify the notation. For $g\in G$ and $i\in Q_0={\rm Obj}(\mathcal{C}(Q,U))$, we write
\begin{align}\label{equ:conven2}
g(i)=\rho(g)^0(i)\in Q_0.
\end{align}
Similarly, for $\alpha\in Q_1$, we write $g(\alpha)=\rho(g)^0(\alpha)\in Q_1$. For $a\in U(i)={\rm Aut}_{\mathcal{C}(Q,U)}(i)$ with $i\in Q_0$, we write
\begin{align}\label{equ:conven3}
g(a)=\rho(g)^1_i(a)\in U(g(i)).
\end{align}
For $u\in U(\alpha)\subseteq {\rm Hom}^0_{\mathcal{C}(Q, U)}(i, j)$ with an arrow $\alpha\in Q_1$ from $i$ to $j$, we write
\begin{align}\label{equ:conven4}
g(u)=\rho(g)^1_\alpha(u)\in U(g(\alpha)).
\end{align}



For admissible $G$-categories, we refer to  Definition~\ref{defn:adm}.

\begin{lem}\label{lem:two-adm}
Let $G$ be a finite group with a $G$-action $\rho$  on $(Q, U)$ as above. Then  the corresponding $G$-category $\mathcal{C}(Q, U)$ is admissible.
\end{lem}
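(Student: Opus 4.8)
The plan is to unwind the definition of admissibility (Definition~\ref{defn:adm}) for the $G$-category $\mathcal{C}(Q,U)$: we must show that for any vertex $i\in Q_0$ and any $g\in G$ with $g(i)\neq i$, we have ${\rm Hom}_{\mathcal{C}(Q,U)}(g(i),i)=\emptyset$. Recall that ${\rm Hom}_{\mathcal{C}(Q,U)}(g(i),i)$ is by construction the disjoint union $\bigsqcup_p U(p)$ over all paths $p$ in $Q$ from $g(i)$ to $i$. So it suffices to prove that there is no path in $Q$ from $g(i)$ to $i$ when $g(i)\neq i$; since each $U(p)$ is nonempty, emptiness of the Hom-set is equivalent to the nonexistence of such a path.

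First I would suppose, for contradiction, that there is a path $p$ from $g(i)$ to $i$ in $Q$. Since $\rho(g)^0$ is a quiver automorphism of $Q$, applying its iterates to $p$ produces paths: $g(p)$ is a path from $g^2(i)$ to $g(i)$, and more generally $g^{k}(p)$ is a path from $g^{k+1}(i)$ to $g^{k}(i)$ for every $k\geq 0$. Let $d$ be the order of $g$ in $G$ (finite, as $G$ is finite). Concatenating, the composite $p \circ g(p) \circ g^2(p) \circ \cdots \circ g^{d-1}(p)$ is a path in $Q$ from $g^{d}(i)=i$ back to $i$. This is a nontrivial path — it has positive length because $p$ is nontrivial, which holds since $s(p)=g(i)\neq i=t(p)$ — with equal source and target, i.e.\ an oriented cycle in $Q$. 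This contradicts the hypothesis that $Q$ is acyclic, which is part of the definition of a finite EI quiver.

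Therefore no such path $p$ exists, so ${\rm Hom}_{\mathcal{C}(Q,U)}(g(i),i)=\emptyset$ whenever $g(i)\neq i$, which is precisely the admissibility condition. I do not anticipate a genuine obstacle here; the only point requiring a little care is the bookkeeping of which quiver automorphism iterate sends which path to which, and confirming that the concatenation of the $g^{k}(p)$ is legitimately composable (the target of $g^{k}(p)$ is $g^{k}(i)$, which equals the source $g^{k}(i)$ of $g^{k-1}(p)$ only after re-indexing correctly — so I would fix the concatenation order as $g^{d-1}(p)$ first up to $p$ last, or the reverse, whichever makes $s$ and $t$ match, and state it cleanly). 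Everything else is immediate from the acyclicity of $Q$ and the disjoint-union description of Hom-sets in $\mathcal{C}(Q,U)$.
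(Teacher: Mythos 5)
Your proof is correct and follows the same route as the paper: reduce admissibility to the nonexistence of a path from $g(i)$ to $i$ via the disjoint-union description of Hom-sets in $\mathcal{C}(Q,U)$, then invoke acyclicity of $Q$. In fact you are more careful than the paper at the one nontrivial point --- the paper simply asserts that acyclicity forbids a path from $g(i)$ to $i$, whereas your concatenation of $p, g(p), \dots, g^{d-1}(p)$ into an oriented cycle is exactly the justification that assertion needs, since acyclicity alone says nothing about paths between distinct vertices.
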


\begin{proof}
Let $g\in G$ and $i\in {\rm Obj}(\mathcal{C}(Q,U))=Q_0$ such that $g(i)\neq i$. Recall that
 $${\rm Hom}_{\mathcal{C}(Q,U)}(g(i),i)=\bigsqcup_{\{p {\rm{ \; paths \; in\;  }} Q {\rm{\; with \; }} s(p)=g(i)  {\rm{\; and \; }} t(p)=i\}} U(p).$$
Since $Q$ is acyclic, there is no path $p$ satisfying $s(p)=g(i)$ and $t(p)=i$. Therefore, the set ${\rm Hom}_{\mathcal{C}(Q,U)}(g(i),i)$ is actually empty, proving that the $G$-category $\mathcal{C}(Q, U)$ is admissible.
\end{proof}


\section{The quotient EI quiver}

In this section, we fix a finite group $G$ and a finite EI quiver $(Q, U)$ with a  $G$-action $\rho$. We will construct its `orbifold' quotient  EI quiver $(\overline{Q}, \overline{U})$ explicitly. We prove that the free EI category $\mathcal{C}(\overline{Q}, \overline{U})$ is equivalent to the skew group category $\mathcal{C}(Q, U)\rtimes G$; see Theorem~\ref{thm:equiv-cat}.

For each vertex $i$ and each arrow $\alpha$ in $Q$, their stabilizers are denoted by $G_i$ and $G_\alpha$, respectively. We observe that $G_\alpha\subseteq G_{s(\alpha)}\cap G_{t(\alpha)}$.

\subsection{The construction of $(\overline{Q}, \overline{U})$}\label{subsec:quo}

The finite quiver $\overline{Q}=(\overline{Q}_0, \overline{Q}_1; s, t)$ is just the quotient quiver of $Q$ by $G$. In more details,  $\overline{Q}_0=Q_0/G$ and $\overline{Q}_1=Q_1/G$ are the corresponding sets of $G$-orbits, and the maps $s$ and $t$ are induced by the ones of $Q$. Since $Q$ is acyclic, we infer that the finite quiver $\overline{Q}$ is acyclic. By definition, we have the canonical projections
$$\pi_0\colon Q_0\longrightarrow \overline{Q}_0  \mbox{ and } \pi_1\colon Q_1\longrightarrow \overline{Q}_1.$$
The vertices and arrows in $\overline{Q}$ are written in the bold form. For example, the vertices are usually denoted by $\boldsymbol{i}$ and $\boldsymbol{j}$.

To define the assignment $\overline{U}$, we have to fix three maps
\begin{align}\label{choice1}
\iota_0\colon \overline{Q}_0\longrightarrow Q_0,\quad \iota_1\colon \overline{Q}_1\longrightarrow Q_1,\quad \mbox{and } g_{(-)}\colon \overline{Q}_1\longrightarrow G
\end{align}
satisfying  the following conditions: $\pi_0\circ \iota_0={\rm Id}_{\overline{Q}_0}$, $\pi_1\circ \iota_1={\rm Id}_{\overline{Q}_1}$,
 \begin{align}\label{choice1-cond}
 t(\iota_1(\boldsymbol{\alpha}))=\iota_0(t(\boldsymbol{\alpha})) \mbox{ and } s(\iota_1(\boldsymbol{\alpha}))=g_{\boldsymbol{\alpha}}(\iota_0(s\boldsymbol{\alpha}))
 \end{align}
  for each arrow $\boldsymbol{\alpha}\in \overline{Q}_1$. Here, we use the convention (\ref{equ:conven2}) for $g_{\boldsymbol{\alpha}}(\iota_0(s\boldsymbol{\alpha}))$.

The inclusion $G_{\iota_1(\boldsymbol{\alpha})}\subseteq G_{\iota_0(t\boldsymbol{\alpha})}$ makes $G_{\iota_0(t\boldsymbol{\alpha})}$ a right $G_{\iota_1(\boldsymbol{\alpha})}$-set. The injective group homomorphism
$$G_{\iota_1(\boldsymbol{\alpha})}\subseteq G_{s(\iota_1(\boldsymbol{\alpha}))} \overset{\sim}{\longrightarrow} G_{\iota_0(s \boldsymbol{\alpha})}, \quad k\mapsto g_{\boldsymbol{\alpha}}^{-1}kg_{\boldsymbol{\alpha}}$$
makes $G_{\iota_0(s\boldsymbol{\alpha})}$ a left $G_{\iota_1(\boldsymbol{\alpha})}$-set.
Therefore, we have the biset product
 $$G_{\iota_0(t\boldsymbol{\alpha})} \times_{G_{\iota_1(\boldsymbol{\alpha})}} G_{\iota_0(s \boldsymbol{\alpha})},$$
 which is naturally a $(G_{\iota_0(t\boldsymbol{\alpha})},G_{\iota_0(s\boldsymbol{\alpha})})$-biset. A typical element in the above biset product is written as $(h, g)$ with $h\in G_{\iota_0(t\boldsymbol{\alpha})}$ and $g\in G_{\iota_0(s\boldsymbol{\alpha})}$. By definition, we have
 \begin{align}\label{equ:biset-prod}
 (hk, g)=(h, g_{\boldsymbol{\alpha}}^{-1}kg_{\boldsymbol{\alpha}}g)
 \end{align}
 for each $k\in G_{\iota_1(\boldsymbol{\alpha})}$.

 Finally, we choose a right coset decomposition
 \begin{align}\label{choice2}
 G_{\iota_0(t\boldsymbol{\alpha})} = \bigsqcup_{r=1}^{m_{\boldsymbol{\alpha}}}   h_{\boldsymbol{\alpha},r} G_{\iota_1(\boldsymbol{\alpha})}.
 \end{align}
 Consequently, any element in $G_{\iota_0(t\boldsymbol{\alpha})} \times_{G_{\iota_1(\boldsymbol{\alpha})}} G_{\iota_0(s\boldsymbol{\alpha})}$ is uniquely written as
 $(h_{\boldsymbol{\alpha}, r},k)$ for $1\leq r\leq m_{\boldsymbol{\alpha}}$ and $k\in G_{\iota_0(s\boldsymbol{\alpha})}$.

 The construction of the assignment $\overline{U}$ is as follows. For each vertex $\boldsymbol{i}$ of $\overline{Q}$, we set
 $$\overline{U}(\boldsymbol{i})=U(\iota_0(\boldsymbol{i}))\rtimes G_{\iota_0(\boldsymbol{i})}.$$
 Here, we note that $G_{\iota_0(\boldsymbol{i})}$ acts on $U(\iota_0(\boldsymbol{i}))$ by group automorphisms. Therefore, the semi-direct product is well defined. For each arrow $\boldsymbol{\alpha}\colon \boldsymbol{i}\rightarrow \boldsymbol{j}$ in $\overline{Q}$, we set
 \begin{align*}
 \overline{U}(\boldsymbol{\alpha})&=U(\iota_1(\boldsymbol{\alpha})) \times (G_{\iota_0(t\boldsymbol{\alpha})} \times_{G_{\iota_1(\boldsymbol{\alpha})}} G_{\iota_0(s\boldsymbol{\alpha})})\\
 &=U(\iota_1(\boldsymbol{\alpha})) \times (G_{\iota_0(\boldsymbol{j})} \times_{G_{\iota_1(\boldsymbol{\alpha})}} G_{\iota_0(\boldsymbol{i})}).
 \end{align*}
 A typical element in $\overline{U}(\boldsymbol{\alpha})$ is denoted by $(u, (h_{\boldsymbol{\alpha}, r}, k))$. The right $\overline{U}(\boldsymbol{i})$-action is given by
 $$(u,(h_{\boldsymbol{\alpha}, r},k)).(a, g)=(u.(g_{\boldsymbol{\alpha}}k(a)), (h_{\boldsymbol{\alpha}, r},kg))$$
 for any $(a, g)\in \overline{U}(\boldsymbol{i})=U(\iota_0(\boldsymbol{i}))\rtimes G_{\iota_0(\boldsymbol{i})}$.  Here, using the convention (\ref{equ:conven3}),  we observe that $g_{\boldsymbol{\alpha}}k(a)$ lies in $U(g_{\boldsymbol{\alpha}} \iota_0(\boldsymbol{i}))=U(s\iota_1(\boldsymbol{\alpha}))$, and  that $u.(g_{\boldsymbol{\alpha}}k(a))$ means the right $U(s\iota_1(\boldsymbol{\alpha}))$-action on $U(\iota_1(\boldsymbol{\alpha}))$.

 To describe the left $\overline{U}(\boldsymbol{j})$-action, we take an arbitrary element $(b, h)\in \overline{U}(\boldsymbol{j})=U(\iota_0(\boldsymbol{j}))\rtimes G_{\iota_0(\boldsymbol{j})}$. Assume that
 $$h h_{\boldsymbol{\alpha}, r}=h_{\boldsymbol{\alpha}, p} k'$$
 for some $1\leq p\leq m_{\boldsymbol{\alpha}}$ and $k'\in G_{\iota_1(\boldsymbol{\alpha})}$. The left $\overline{U}(\boldsymbol{j})$-action is given by
 \begin{align*}
 (b, h).(u,(h_{\boldsymbol{\alpha}, r},k))&=(h^{-1}_{\boldsymbol{\alpha}, p}(b).k'(u), (h_{\boldsymbol{\alpha},p}k', k))\\
 &=(h^{-1}_{\boldsymbol{\alpha}, p}(b).k'(u), (h_{\boldsymbol{\alpha},p}, g^{-1}_{\boldsymbol{\alpha}}k'g_{\boldsymbol{\alpha}}k)).
 \end{align*}
 Here, we observe that $h^{-1}_{\boldsymbol{\alpha}, p}(b)$ lies in $U(\iota_0(\boldsymbol{j}))=U(t\iota_1(\boldsymbol{\alpha}))$, and that $k'(u)$ lies in $U(\iota_1(\boldsymbol{\alpha}))$. The convention (\ref{equ:conven4}) is used for $k'(u)$.  Finally, $h^{-1}_{\boldsymbol{\alpha}, p}(b).k'(u)$ denotes the left $U(t\iota_1(\boldsymbol{\alpha}))$-action on $U(\iota_1(\boldsymbol{\alpha}))$.

 The above actions make $\overline{U}(\boldsymbol{\alpha})$ a $(\overline{U}(\boldsymbol{j}), \overline{U}(\boldsymbol{i}))$-biset. In summary, we have defined the finite EI quiver $(\overline{Q}, \overline{U})$.

\subsection{An equivalence of categories}

 The EI quiver $(\overline{Q}, \overline{U})$ might be viewed as a certain `orbifold' quotient of $(Q, U)$. We mention a similar construction in the classic work \cite[Section~3]{Bass} on graphs of groups. The EI quiver $(\overline{Q}, \overline{U})$ depends on the choices in (\ref{choice1}) and (\ref{choice2}).

 The following result justifies the quotient construction.

 \begin{thm}\label{thm:equiv-cat}
 Let $(Q, U)$ be a finite EI quiver with a $G$-action $\rho$, and let $(\overline{Q}, \overline{U})$ be its quotient as above. Then there is an equivalence of categories
 $$\mathcal{C}(\overline{Q}, \overline{U})\simeq \mathcal{C}(Q, U)\rtimes G.$$
 \end{thm}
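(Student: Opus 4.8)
The plan is to invoke the universal property of Proposition~\ref{prop:universal}, applied to the finite EI category $\mathcal{D}=\mathcal{C}(Q,U)\rtimes G$ (finite EI by Proposition~\ref{prop:skew-EI}). I will exhibit explicit data $(\phi,\{\psi_{\boldsymbol{i}}\},\{\psi_{\boldsymbol{\alpha}}\})$ and then check (E1)--(E4). On objects, set $\phi(\boldsymbol{i})=\iota_0(\boldsymbol{i})$. Since $\mathcal{C}(Q,U)$ is admissible by Lemma~\ref{lem:two-adm} and EI, Corollary~\ref{cor:EI} identifies $\overline{U}(\boldsymbol{i})=U(\iota_0(\boldsymbol{i}))\rtimes G_{\iota_0(\boldsymbol{i})}$ with ${\rm Aut}_{\mathcal{C}(Q,U)\rtimes G}(\iota_0(\boldsymbol{i}))$; take $\psi_{\boldsymbol{i}}$ to be this identification, $\psi_{\boldsymbol{i}}(a,g)=(a,g)$. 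For an arrow $\boldsymbol{\alpha}\colon\boldsymbol{i}\to\boldsymbol{j}$ of $\overline{Q}$, writing a general element of $\overline{U}(\boldsymbol{\alpha})$ in the normal form $(u,(h_{\boldsymbol{\alpha},r},k))$ of Subsection~\ref{subsec:quo} (so $u\in U(\iota_1(\boldsymbol{\alpha}))$, $k\in G_{\iota_0(\boldsymbol{i})}$), set
\[
\psi_{\boldsymbol{\alpha}}\bigl(u,(h_{\boldsymbol{\alpha},r},k)\bigr)=\bigl(h_{\boldsymbol{\alpha},r}(u),\ h_{\boldsymbol{\alpha},r}\,g_{\boldsymbol{\alpha}}\,k\bigr).
\]
Using (\ref{choice1-cond}) together with $h_{\boldsymbol{\alpha},r}\in G_{\iota_0(\boldsymbol{j})}$ and $k\in G_{\iota_0(\boldsymbol{i})}$, the right-hand side is a morphism $\iota_0(\boldsymbol{i})\to\iota_0(\boldsymbol{j})$ in $\mathcal{C}(Q,U)\rtimes G$; it is unfactorizable there since $h_{\boldsymbol{\alpha},r}(u)$ is unfactorizable in $\mathcal{C}(Q,U)$ (Lemma~\ref{lem:unf-iso} and (\ref{equ:unf-free})) and by Lemma~\ref{lem:unf}.

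The first real task is to verify that $\psi_{\boldsymbol{\alpha}}$ is a map of $(\overline{U}(\boldsymbol{j}),\overline{U}(\boldsymbol{i}))$-bisets, for the biset structure on the target determined by $\psi_{\boldsymbol{i}},\psi_{\boldsymbol{j}}$ as in Proposition~\ref{prop:universal} (pre- and post-composition in $\mathcal{C}(Q,U)\rtimes G$). For the right $\overline{U}(\boldsymbol{i})$-action one applies $\psi_{\boldsymbol{\alpha}}$ to $(u,(h_{\boldsymbol{\alpha},r},k)).(a,g)=\bigl(u.(g_{\boldsymbol{\alpha}}k(a)),(h_{\boldsymbol{\alpha},r},kg)\bigr)$ and expands $h_{\boldsymbol{\alpha},r}\bigl(u\circ g_{\boldsymbol{\alpha}}k(a)\bigr)=h_{\boldsymbol{\alpha},r}(u)\circ(h_{\boldsymbol{\alpha},r}g_{\boldsymbol{\alpha}}k)(a)$, which equals $\psi_{\boldsymbol{\alpha}}(u,(h_{\boldsymbol{\alpha},r},k))\circ(a,g)$ by (\ref{equ:composition}). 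For the left $\overline{U}(\boldsymbol{j})$-action one uses the coset identity $hh_{\boldsymbol{\alpha},r}=h_{\boldsymbol{\alpha},p}k'$ defining that action in Subsection~\ref{subsec:quo}: applying $\psi_{\boldsymbol{\alpha}}$ to $(b,h).(u,(h_{\boldsymbol{\alpha},r},k))$ and simplifying $h_{\boldsymbol{\alpha},p}\bigl(h_{\boldsymbol{\alpha},p}^{-1}(b).k'(u)\bigr)=b\circ(hh_{\boldsymbol{\alpha},r})(u)$ and $h_{\boldsymbol{\alpha},p}g_{\boldsymbol{\alpha}}(g_{\boldsymbol{\alpha}}^{-1}k'g_{\boldsymbol{\alpha}}k)=hh_{\boldsymbol{\alpha},r}g_{\boldsymbol{\alpha}}k$ recovers $(b,h)\circ\psi_{\boldsymbol{\alpha}}(u,(h_{\boldsymbol{\alpha},r},k))$. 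Proposition~\ref{prop:universal} then yields a functor $\Phi\colon\mathcal{C}(\overline{Q},\overline{U})\to\mathcal{C}(Q,U)\rtimes G$.

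It remains to check (E1)--(E4). (E1) is that $\mathcal{C}(Q,U)\rtimes G$ is free EI, which holds by Proposition~\ref{prop:Free} since $\mathcal{C}(Q,U)$ is free EI. For (E2) and (E3): as $Q$ is acyclic, $\mathcal{C}(Q,U)$ has no isomorphisms between distinct objects, so by Lemma~\ref{lem:facts}(2) the objects $\iota_0(\boldsymbol{i})$ and $\iota_0(\boldsymbol{j})$ are isomorphic in $\mathcal{C}(Q,U)\rtimes G$ iff they lie in a common $G$-orbit, iff $\boldsymbol{i}=\boldsymbol{j}$ (since $\pi_0\circ\iota_0={\rm Id}$); and every vertex of $Q$ lies in the orbit of some $\iota_0(\boldsymbol{i})$, hence is isomorphic to $\phi(\boldsymbol{i})$. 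In (E4) each $\psi_{\boldsymbol{i}}$ is an isomorphism by construction. For the bijection on unfactorizable morphisms, Lemma~\ref{lem:unf} and (\ref{equ:unf-free}) identify ${\rm Hom}^0_{\mathcal{C}(Q,U)\rtimes G}(\iota_0(\boldsymbol{i}),\iota_0(\boldsymbol{j}))$ with the pairs $(\beta,g)$, $g\in G$, such that $\beta\in U(\gamma)$ for some arrow $\gamma\colon g(\iota_0(\boldsymbol{i}))\to\iota_0(\boldsymbol{j})$ of $Q$; given such $(\beta,g)$, put $\boldsymbol{\alpha}=\pi_1(\gamma)$ (an arrow $\boldsymbol{i}\to\boldsymbol{j}$) and write $\gamma=h(\iota_1(\boldsymbol{\alpha}))$ with $h\in G_{\iota_0(\boldsymbol{j})}$, $h=h_{\boldsymbol{\alpha},r}k''$, $k''\in G_{\iota_1(\boldsymbol{\alpha})}$; a direct check gives $(\beta,g)=\psi_{\boldsymbol{\alpha}}\bigl(h_{\boldsymbol{\alpha},r}^{-1}(\beta),(h_{\boldsymbol{\alpha},r},g_{\boldsymbol{\alpha}}^{-1}h_{\boldsymbol{\alpha},r}^{-1}g)\bigr)$, where one verifies $g_{\boldsymbol{\alpha}}^{-1}h_{\boldsymbol{\alpha},r}^{-1}g\in G_{\iota_0(\boldsymbol{i})}$, proving surjectivity; injectivity (including across different $\boldsymbol{\alpha}$) follows by reading the arrow $\pi_1$ off the first component and then comparing coset representatives and the remaining data. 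Hence (E1)--(E4) hold and $\Phi$ is an equivalence.

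The main obstacle is the second paragraph together with the (E4) bijection: choosing $\psi_{\boldsymbol{\alpha}}$ so that it is simultaneously biset-equivariant and bijective onto the unfactorizable morphisms forces careful tracking of the coset representatives $h_{\boldsymbol{\alpha},r}$, the elements $g_{\boldsymbol{\alpha}}$, and the conjugation $k\mapsto g_{\boldsymbol{\alpha}}^{-1}kg_{\boldsymbol{\alpha}}$ built into the biset products of Subsection~\ref{subsec:quo}, all under the twisted composition (\ref{equ:composition}) of the skew group category. Once Propositions~\ref{prop:skew-EI}, \ref{prop:Free}, \ref{prop:universal} and Corollary~\ref{cor:EI} are available, the remainder is bookkeeping.
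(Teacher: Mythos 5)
Your proposal is correct and follows essentially the same route as the paper: both apply Proposition~\ref{prop:universal} to $\mathcal{D}=\mathcal{C}(Q,U)\rtimes G$ with $\phi=\iota_0$, identify $\overline{U}(\boldsymbol{i})$ with ${\rm Aut}_{\mathcal{C}(Q,U)\rtimes G}(\iota_0(\boldsymbol{i}))$ via Lemma~\ref{lem:two-adm} and Corollary~\ref{cor:EI}, use exactly the map $(u,(h_{\boldsymbol{\alpha},r},k))\mapsto(h_{\boldsymbol{\alpha},r}(u),h_{\boldsymbol{\alpha},r}g_{\boldsymbol{\alpha}}k)$ on arrows, and verify (E1)--(E4) in the same way. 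The only cosmetic difference is that the paper packages the (E4) bijection as an isomorphism $\overline{U}(\boldsymbol{\alpha})\simeq S(\boldsymbol{\alpha})$ onto explicit sub-bisets of ${\rm Hom}^0$ (leaving the biset-equivariance as a routine check), whereas you verify equivariance and bijectivity directly.
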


\begin{proof}
 We write $\mathcal{C}=\mathcal{C}(Q, U)$ in this proof. We will apply Proposition~\ref{prop:universal} to deduce the equivalence. We use the map $\iota_0\colon \overline{Q}_0\rightarrow Q_0={\rm Obj}(\mathcal{C}\rtimes G)$, and the identification
 $$\overline{U}(\boldsymbol{i})=U(\iota_0(\boldsymbol{i}))\rtimes G_{\iota_0(\boldsymbol{i})}={\rm Aut}_{\mathcal{C}\rtimes G}(\iota_0(\boldsymbol{i})),$$
 where the right equality follows by combining Lemma~\ref{lem:two-adm} and  Corollary~\ref{cor:EI}.

 To apply  Proposition~\ref{prop:universal}, it remains to construct for each arrow $\boldsymbol{\alpha}$ in $\overline{Q}$, a map between bisets
 $$\overline{U}(\boldsymbol{\alpha})\longrightarrow {\rm Hom}_{\mathcal{C}\rtimes G}(\iota_0(s\boldsymbol{\alpha}), \iota_0(t\boldsymbol{\alpha})).$$
 We will see in the following construction that these maps between bisets yield the required bijection in (E4).

 By Proposition~\ref{prop:Free}, the category $\mathcal{C}\rtimes G$ is EI free, therefore the condition (E1) is satisfied. For two different vertices $\boldsymbol{i}$ and $\boldsymbol{j}$, the vertices $\iota_0(\boldsymbol{i})$ and $\iota_0(\boldsymbol{j})$ are not in the same $G$-orbit. By Lemma~\ref{lem:facts}(2),  $\iota_0(\boldsymbol{i})$ and $\iota_0(\boldsymbol{j})$ are not isomorphic in $\mathcal{C}\rtimes G$, proving the condition (E2).  For each vertex $i\in Q_0$, the corresponding object $i$ is isomorphic to $\iota_0(\pi_0(i))$ in $\mathcal{C}\rtimes G$, proving  (E3). Once we construct the above maps between bisets, we will infer by Proposition~\ref{prop:universal} the required equivalence of categories.

 To construct the required maps, we take arbitrary vertices $\boldsymbol{i}$ and $\boldsymbol{j}$ in $\overline{Q}$. By Lemma~\ref{lem:unf}, we have
 \begin{align*}
{\rm Hom}^0_{\mathcal{C}\rtimes G}(\iota_0(\boldsymbol{i}),\iota_0(\boldsymbol{j}))=&\{(\theta,g)\mid g\in G, \theta\in {\rm Hom}^0_{\mathcal{C}}(g(\iota_0(\boldsymbol{i})),\iota_0(\boldsymbol{j}))\}\\
= &\bigsqcup_{g\in G}  {\rm Hom}^0_{\mathcal{C}}(g(\iota_0(\boldsymbol{i})),\iota_0(\boldsymbol{j}))\times \{g\}. \\
= &\bigsqcup_{g\in G} \; \; \bigsqcup_{\{\alpha\in Q_1\mid s(\alpha)=g(\iota_0(\boldsymbol{i})), \; t(\alpha)=\iota_0(\boldsymbol{j})\}}  U(\alpha)\times \{g\}.
\end{align*}
Here, for the last equality we use (\ref{equ:unf-free}).

For each arrow $\boldsymbol{\alpha}\colon \boldsymbol{i}\rightarrow \boldsymbol{j}$, we define the following subset of ${\rm Hom}^0_{\mathcal{C}\rtimes G}(\iota_0(\boldsymbol{i}),\iota_0(\boldsymbol{j}))$
\begin{align*}
S(\boldsymbol{\alpha})=\bigsqcup_{\{\alpha\in Q_1\mid \pi_1(\alpha)=\boldsymbol{\alpha}, \; t(\alpha)=\iota_0(\boldsymbol{j})\}} \; \bigsqcup_{\{g\in G\mid s(\alpha)=g(\iota_0(\boldsymbol{i})\}} U(\alpha)\times \{g\}.
\end{align*}
Recall from Example~\ref{exm:hom-biset} that the  $(\overline{U}(\boldsymbol{j}), \overline{U}(\boldsymbol{i}))$-biset structure on ${\rm Hom}^0_{\mathcal{C}\rtimes G}(\iota_0(\boldsymbol{i}),\iota_0(\boldsymbol{j}))$ is induced by composition (\ref{equ:composition}) of morphisms in $\mathcal{C}\rtimes G$. Then we infer that $S(\boldsymbol{\alpha})$ is a $(\overline{U}(\boldsymbol{j}), \overline{U}(\boldsymbol{i}))$-sub-biset. Now, we have the following disjoint union
 \begin{align}\label{equ:dis-uni}
 {\rm Hom}^0_{\mathcal{C}\rtimes G}(\iota_0(\boldsymbol{i}),\iota_0(\boldsymbol{j}))=\bigsqcup_{\{\boldsymbol{\alpha}\in \overline{Q}_1\mid s(\boldsymbol{\alpha})=\boldsymbol{i},\; t(\boldsymbol{\alpha})=\boldsymbol{j}\}} S(\boldsymbol{\alpha}).
 \end{align}

 We will complete the proof by establishing an isomorphism of $(\overline{U}(\boldsymbol{j}), \overline{U}(\boldsymbol{i}))$-bisets
 $$\overline{U}(\boldsymbol{\alpha})\simeq S(\boldsymbol{\alpha})$$
 for any arrow $\boldsymbol{\alpha}\colon \boldsymbol{i}\rightarrow \boldsymbol{j}$. Indeed, in view of (\ref{equ:dis-uni}), the isomorphism yields the required bijection in (E4). Then we are done.

 To analyze $S(\boldsymbol{\alpha})$, we observe that in the index set of the outer disjoint union, the arrows $\alpha$ are of the form $h(\iota_1(\boldsymbol{\alpha}))$ for some $h\in G_{\iota_0(\boldsymbol{j})}$. By the coset decomposition (\ref{choice2}), there is a unique $1\leq r\leq m_{\boldsymbol{\alpha}}$ satisfying
$$\alpha=h_{\boldsymbol{\alpha}, r}(\iota_1(\boldsymbol{\alpha})).$$
For simplicity, we write  $h_r$ for $h_{\boldsymbol{\alpha}, r}$. In the inner disjoint union, we have
$$
g(\iota_0(\boldsymbol{i}))=s(\alpha)=h_r(s \iota_1(\boldsymbol{\alpha}))=h_rg_{\boldsymbol{\alpha}} (\iota_0(\boldsymbol{i})).
$$
Consequently, there is a unique $k\in G_{\iota_0(\boldsymbol{i})}$ satisfying $$g=h_rg_{\boldsymbol{\alpha}}k.$$
 The above analysis implies that
$$S(\boldsymbol{\alpha})=\bigsqcup_{r=1}^{m_{\boldsymbol{\alpha}}} \bigsqcup_{k\in G_{\iota_0(\boldsymbol{i})}} U(h_r(\iota_1(\boldsymbol{\alpha})))\times \{h_rg_{\boldsymbol{\alpha}} k\}.$$

We observe that $U(\iota_1(\boldsymbol{\alpha}))$ is bijective to each $U(h_r(\iota_1(\boldsymbol{\alpha})))$ via $\rho(h_r)^1_{\iota_1(\boldsymbol{\alpha})}$, which sends $u\in U(\iota_1(\boldsymbol{\alpha}))$ to $h_r(u)\in U(h_r(\iota_1(\boldsymbol{\alpha})))$. Moreover, the biset product $$G_{\iota_0(\boldsymbol{j})} \times_{G_{\iota_1(\boldsymbol{\alpha})}} G_{\iota_0(\boldsymbol{i})}$$
 is bijective to
$$\{1,2, \cdots, m_{\boldsymbol{\alpha}}\}\times G_{\iota_0(\boldsymbol{i})},$$
which is further bijective to the following disjoint union
$$\bigsqcup_{r=1}^{m_{\boldsymbol{\alpha}}} \bigsqcup_{k\in G_{\iota_0(\boldsymbol{i})}} \{h_r g_{\boldsymbol{\alpha}}k\}.$$
Using these bijections, we infer that the following map
$$\overline{U}(\boldsymbol{\alpha})= U(\iota_1(\boldsymbol{\alpha})) \times (G_{\iota_0(\boldsymbol{j})} \times_{G_{\iota_1(\boldsymbol{\alpha})}} G_{\iota_0(\boldsymbol{i})})\longrightarrow S(\boldsymbol{\alpha}), \;(u, (h_r, k))\mapsto (h_r(u), h_rg_{\boldsymbol{\alpha}} k)$$
is a bijection. We omit the routine verification that this explicit bijection is indeed a map of $(\overline{U}(\boldsymbol{j}), \overline{U}(\boldsymbol{i}))$-bisets. This is the required isomorphism of bisets.
\end{proof}

The above construction of the quotient EI quiver $(\overline{Q}, \overline{U})$ is rather general. In what follows, we impose conditions which will simplify the construction.

\begin{rem}\label{rem:SAT}
Assume that the $G$-action $\rho$ on $(Q, U)$ satisfies the following triviality conditions:
\begin{enumerate}
\item for each $i\in Q_0$, $g\in G_i$ and $a\in U(i)$, we have $g(a)=a$;
\item for each $\alpha\in Q_1$, $g\in G_\alpha$ and $u\in U(\alpha)$, we have $g(u)=u$.
\end{enumerate}
Then the quotient EI quiver $(\overline{Q}, \overline{U})$ is described as follows: $$\overline{U}(\boldsymbol{i})=U(\iota_0(\boldsymbol{i}))\times G_{\iota_0(\boldsymbol{i})}$$
 is the direct product;  for each arrow $\boldsymbol{\alpha}\colon \boldsymbol{i}\rightarrow \boldsymbol{j}$ in $\overline{Q}$, we have
 \begin{align*}
 \overline{U}(\boldsymbol{\alpha})=U(\iota_1(\boldsymbol{\alpha})) \times (G_{\iota_0(\boldsymbol{j})} \times_{G_{\iota_1(\boldsymbol{\alpha})}} G_{\iota_0(\boldsymbol{i})}).
 \end{align*}
 Its typical element is denoted by $(u, (h, k))$ for $u\in U(\iota_1(\boldsymbol{\alpha}))$, $h\in G_{\iota_0(\boldsymbol{j})} $ and $k\in G_{\iota_0(\boldsymbol{i})}$. The right $\overline{U}(\boldsymbol{i})$-action is given by
 $$(u,(h,k)).(a, g)=(u.g_{\boldsymbol{\alpha}}(a), (h, kg)).$$
The left $\overline{U}(\boldsymbol{j})$-action is given by
 \begin{align*}
 (b, g').(u,(h,k))&=(b.u, (g'h, k)).
 \end{align*}
 Here, $u.g_{\boldsymbol{\alpha}}(a)$ and $b.u$ mean the right $U(s\iota_1(\boldsymbol{\alpha}))$-action and the left $U(t\iota_1(\boldsymbol{\alpha}))$-action on $U(\iota_1(\boldsymbol{\alpha}))$, respectively.
\end{rem}

Let $\Delta=(\Delta_0, \Delta_1; s, t)$ be a finite acyclic quiver. Recall that the \emph{path category} $\mathcal{P}_\Delta$ is defined as follows: ${\rm Obj}(\mathcal{P}_\Delta)=\Delta_0$ and ${\rm Hom}_{\mathcal{P}_\Delta}(i, j)$ consists of all paths from $i$ to $j$; the composition is given by concatenation of paths.

Denote by $(\Delta, U_{\rm tr})$ the  EI quiver with trivial assignment $U_{\rm tr}$, that is, each group $U_{\rm tr}(i)$ is trivial and each biset $U_{\rm tr}(\alpha)$ has only one element. We observe
\begin{align}\label{equ:idenf}
\mathcal{C}(\Delta, U_{\rm tr})=\mathcal{P}_\Delta.
\end{align}

Let $G$ be a finite group which acts on $\Delta$ by quiver automorphisms. Then $G$ acts on the associated EI quiver $(\Delta, U_{\rm tr})$.  Denote by $(\overline{\Delta}, \overline{U}_{\rm tr})$ the quotient EI quiver. Therefore, $\overline{\Delta}$ is the quotient quiver of $\Delta$ by $G$.  Fix the choices (\ref{choice1}). In view of Remark~\ref{rem:SAT}, the assignment $\overline{U}_{\rm tr}$ is described as follows: for each vertex $\boldsymbol{i}$ of $\overline{\Delta}$, we have
$$\overline{U}_{\rm tr}(\boldsymbol{i})=G_{\iota_0(\boldsymbol{i})};$$
for each arrow $\boldsymbol{\alpha}\colon \boldsymbol{i}\rightarrow \boldsymbol{j}$ in $\overline{\Delta}$, we have
$$\overline{U}_{\rm tr}(\boldsymbol{\alpha})=G_{\iota_0(\boldsymbol{i})}\times_{G_{\iota_1(\boldsymbol{\alpha})}} G_{\iota_0(\boldsymbol{j})},$$
whose $(G_{\iota_0(\boldsymbol{i})}, G_{\iota_0(\boldsymbol{j})})$-biset structure is given by the  multiplication of  $G_{\iota_0(\boldsymbol{i})}$ from the left, and of $G_{\iota_0(\boldsymbol{j})})$ from the right.

In view of (\ref{equ:idenf}), we have the following special case of Theorem~\ref{thm:equiv-cat}.

\begin{cor}\label{cor:quiver}
Let $\Delta$ be a finite acyclic quiver with a $G$-action. Keep the notation as above. Then there is an equivalence of categories
$$\hskip 130pt
\mathcal{C}(\overline{\Delta}, \overline{U}_{\rm tr})\simeq \mathcal{P}_\Delta\rtimes G.
 \hskip 130pt \square$$
\end{cor}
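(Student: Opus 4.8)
The plan is to obtain this as a direct specialization of Theorem~\ref{thm:equiv-cat}. First I would note that a $G$-action on $\Delta$ by quiver automorphisms is the same thing as a $G$-action on the EI quiver $(\Delta, U_{\rm tr})$ in the sense of Subsection~4.3: since every group $U_{\rm tr}(i)$ is trivial and every biset $U_{\rm tr}(\alpha)$ is a one-element set, the assignment-level datum $\sigma^1$ attached to any quiver automorphism $\sigma^0$ is uniquely (and trivially) determined, so that ${\rm Aut}(\Delta)={\rm Aut}(\Delta, U_{\rm tr})$ and the given homomorphism $G\to {\rm Aut}(\Delta)$ is at the same time a homomorphism $G\to {\rm Aut}(\Delta, U_{\rm tr})$. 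Hence $(\Delta, U_{\rm tr})$ is a finite EI quiver with a $G$-action, and Theorem~\ref{thm:equiv-cat} applies to it.

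Next I would observe that the triviality conditions (1) and (2) of Remark~\ref{rem:SAT} hold vacuously for $(\Delta, U_{\rm tr})$, again because each $U_{\rm tr}(i)$ and each $U_{\rm tr}(\alpha)$ has only one element. Therefore the simplified description of the quotient EI quiver given in Remark~\ref{rem:SAT} is available, and substituting $U_{\rm tr}$ there produces exactly the assignment $\overline{U}_{\rm tr}$ recorded just before the statement: $\overline{U}_{\rm tr}(\boldsymbol{i})=\{1\}\times G_{\iota_0(\boldsymbol{i})}=G_{\iota_0(\boldsymbol{i})}$, and for an arrow $\boldsymbol{\alpha}\colon\boldsymbol{i}\to\boldsymbol{j}$, $\overline{U}_{\rm tr}(\boldsymbol{\alpha})=\{1\}\times\bigl(G_{\iota_0(\boldsymbol{j})}\times_{G_{\iota_1(\boldsymbol{\alpha})}}G_{\iota_0(\boldsymbol{i})}\bigr)$, whose biset structure reduces to left multiplication by $G_{\iota_0(\boldsymbol{j})}$ and right multiplication by $G_{\iota_0(\boldsymbol{i})}$ (the actions $u.g_{\boldsymbol{\alpha}}(a)$ and $b.u$ of Remark~\ref{rem:SAT} become trivial, since $u$ is the unique element of the one-element set). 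Matching this explicit description against the general construction of Subsection~\ref{subsec:quo} is the only point that requires a moment's attention, essentially bookkeeping of the left/right conventions.

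Finally I would invoke the identification $\mathcal{C}(\Delta, U_{\rm tr})=\mathcal{P}_\Delta$ from (\ref{equ:idenf}) and apply Theorem~\ref{thm:equiv-cat} to the $G$-EI quiver $(\Delta, U_{\rm tr})$, whose quotient EI quiver is $(\overline{\Delta},\overline{U}_{\rm tr})$ as just described. This yields $\mathcal{C}(\overline{\Delta},\overline{U}_{\rm tr})\simeq \mathcal{C}(\Delta, U_{\rm tr})\rtimes G=\mathcal{P}_\Delta\rtimes G$, which is the asserted equivalence. I expect no genuine obstacle: all the combinatorial content lives in Theorem~\ref{thm:equiv-cat} and its proof, and the present statement merely records the specialization to trivial assignments, so the write-up amounts to verifying that the hypotheses of Theorem~\ref{thm:equiv-cat} and of Remark~\ref{rem:SAT} are met and that the two descriptions of $\overline{U}_{\rm tr}$ coincide.
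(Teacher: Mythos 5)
Your proposal is correct and follows exactly the route the paper takes: the corollary is stated there as an immediate specialization of Theorem~\ref{thm:equiv-cat} via the identification $\mathcal{C}(\Delta, U_{\rm tr})=\mathcal{P}_\Delta$ and the simplified description of the quotient assignment from Remark~\ref{rem:SAT}, whose triviality hypotheses hold vacuously for $U_{\rm tr}$. Your extra remark on the left/right bookkeeping is apt (your form $G_{\iota_0(\boldsymbol{j})}\times_{G_{\iota_1(\boldsymbol{\alpha})}}G_{\iota_0(\boldsymbol{i})}$ is the one consistent with the general construction in Subsection~\ref{subsec:quo}), but otherwise there is nothing to add.
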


\section{Categories and algebras associated to Cartan triples}

In this section, we will first recall the algebras \cite{GLS} and EI categories \cite{CW} associated to Cartan triples. For a finite group action on a finite acyclic quiver, we give sufficient conditions on when the quotient EI quiver is of Cartan type. Consequently, the skew group algebra of the path algebra is Morita equivalent to the algebra studied in \cite{GLS}; see Theorem~\ref{thm:quiver}.

For two nonzero integers $a$ and $b$, we denote by ${\rm gcd}(a,b)$ their greatest common divisor, which is always assumed to be positive.

\subsection{Cartan triples}

Let $n\geq 1$ be a positive integer. An $n\times n$ matrix $C=(c_{ij})$ with integer coefficients is called a \emph{symmetrizable generalized Cartan matrix},  provided that the following conditions are satisfied:
\begin{enumerate}
	\item[(C1)] $c_{ii}=2$ for all $i$;
	\item[(C2)] $c_{ij}\leq 0$ for all $i\neq j$, and $c_{ij}<0$ if and only if $c_{ji}<0$;
	\item[(C3)] There is a diagonal  matrix $D={\rm diag}(c_1,\cdots,c_n)$  with $c_i\in  \mathbb{Z}_{\geq 1}$ for all $i$ such that the product matrix $DC$ is symmetric.
\end{enumerate}

The matrix $D$ appearing in (C3) is called a \emph{symmetrizer} of $C$. For brevity, a symmetrizable generalized Cartan matrix is called a Cartan matrix.

Let $C=(c_{ij})$ be a Cartan matrix. An (acyclic) \emph{orientation} of $C$ is a subset $\Omega\subset \{1,2,\cdots,n\}\times \{1,2,\cdots,n\}$ such that the following conditions are satisfied:
\begin{enumerate}
	\item[(O1)] $\{(i,j),(j,i)\}\cap \Omega\neq \emptyset$ if and only if $c_{ij}<0$;
	\item[(O2)] for each sequence $((i_1,i_2),(i_2,i_3),\cdots,(i_t,i_{t+1}))$ with $t\geq 1$ and $(i_s,i_{s+1})\in \Omega$ for all $1\leq s\leq t$,  we have $i_1\neq i_{t+1}$.
\end{enumerate}
Following \cite{CW}, we will call $(C, D, \Omega)$ a \emph{Cartan triple}, where $C$ is a Cartan matrix, $D$ its symmetrizer and $\Omega$ an orientation of $C$.

In what follows, we recall that, associated to each Cartan triple, there are a finite free EI category $\mathcal{C}(C, D, \Omega)$ and a finite dimensional algebra $H(C, D, \Omega)$.

Let $Q=Q(C,\Omega)$ be the finite quiver with the set of vertices $Q_0=\{1,2, \cdots,n\}$ and with the set of arrows
\[Q_1=\{\alpha^{(g)}_{ij}\colon j\rightarrow i\mid(i,j)\in \Omega, 1\leq g\leq {\rm gcd}(c_{ij},c_{ji})\}\sqcup\{\varepsilon_i\colon i\rightarrow i\mid 1\leq i\leq n\}.\]
Let $Q^{\circ}=Q^{\circ}(C,\Omega)$ be the quiver obtained from $Q$ by deleting all the loops $\varepsilon_i$. By the condition (O2), we infer that the finite quiver  $Q^{\circ}$ is acyclic.

We recall the finite EI quiver $(Q^\circ, X)$.  The assignment $X$ is given as follows: $X(i)=\langle \eta_i\; |\; \eta_i^{c_i}=1\rangle$ is a cyclic group of order $c_i$; for each $(i, j)\in \Omega$, we set $G_{ij}=\langle \eta_{ij}\; |\; \eta_{ij}^{{\rm gcd}(c_i, c_j)}=1\rangle$ to be a cyclic group of order ${\rm gcd}(c_i, c_j)$. There are injective group homomorphisms
\[G_{ij}\hookrightarrow X(i), \;  \eta_{ij}\mapsto \eta_i^{\frac{c_i}{{\rm gcd}(c_i, c_j)}}\]
and
\[G_{ij}\hookrightarrow X(j), \;  \eta_{ij}\mapsto \eta_j^{\frac{c_j}{{\rm gcd}(c_i, c_j)}}.\]
Then we have the $(X(i), X(j))$-biset $X(i)\times_{G_{ij}} X(j)$. We set
$$X(\alpha_{ij}^{(g)})=X(i)\times_{G_{ij}} X(j)$$
 for each $1\leq g\leq {\rm gcd}(c_{ij}, c_{ji})$.

 \begin{defn}{\rm (\cite[Definition~4.1]{CW})} \label{defn:CW}
Associated to a Cartan triple $(C, D, \Omega)$,  the finite EI category $\mathcal{C}(C, D, \Omega)$ is defined to be the free EI category $\mathcal{C}(Q^\circ, X)$ associated to the above EI quiver $(Q^\circ, X)$. We say that such  EI quivers $(Q^\circ, X)$ and  EI categories $\mathcal{C}(C, D, \Omega)$ are of \emph{Cartan type}.\hfill $\square$
\end{defn}

Let $\mathbb{K}$  be a field. The following algebras \cite{GLS} play a fundamental role in categorifying the root lattices for non-symmetric Caran matrices. For more background, we refer to \cite{Gei}.

\begin{defn}{\rm (\cite[Section~1.4]{GLS})}\label{defn:GLS}
Let $(C, D, \Omega)$ be a Cartan triple with  $Q=Q(C,\Omega)$. Consider the following $\mathbb{K}$-algebra
\[H(C,D,\Omega)=\mathbb{K}Q/I,\]
where $\mathbb{K}Q$ is the path algebra of $Q$, and $I$ is the two-sided ideal of $\mathbb{K}Q$ generated by  the following set
$$\; \{\varepsilon_k^{c_k}, \; \varepsilon_i^{\frac{c_i}{{\rm gcd}(c_i,c_j)}} \alpha^{(g)}_{ij}-\alpha^{(g)}_{ij}\varepsilon_j^{\frac{c_j}{{\rm gcd}(c_i,c_j)}}\mid k\in Q_0, \;  (i,j)\in \Omega, \; 1\leq g\leq {\rm gcd}(c_{ij},c_{ji})\}.\; \; \; \square$$
\end{defn}

We will recall from \cite[Subsection~4.2]{CW} the construction of a new Cartan triple $(C', D', \Omega')$ from a given one $(C, D, \Omega)$, which depends on the characteristic of $\mathbb{K}$. Recall that $D={\rm diag}(c_1, \cdots, c_n)$.

\vskip 5pt

{\bf{Construction $(\ddag)$}} for the case ${\rm char}(\mathbb{K})=p>0$.\quad  Assume that $c_i=p^{r_i}d_i$ satisfying $r_i\geq 0$ and ${\rm gcd}(p, d_i)=1$. For each $1\leq i, j\leq n$, we set
\[\Sigma_{ij}^p=\{ (l_i,l_j)\; |\;  0\leq l_i< d_i, 0\leq l_j< d_j, l_ip^{r_i}\equiv l_jp^{r_j} ({\rm mod}\ {\rm gcd}(d_i, d_j))\}.\]

The rows and columns of the Cartan matrix $C'$ and its symmetrizer $D'$ are indexed by the following set
$$M=\bigsqcup_{1\leq i\leq n} \{(i, l_i)\; |\; 0\leq l_i<d_i\}.$$
The diagonal entries of $C'$ are $2$, and the off-diagonal entries are given as follows:
 \[c'_{(i, l_i), (j, l_j)}=\begin{cases}
-{\rm gcd}(c_{ij},c_{ji}) p^{r_j-{\rm min}(r_i, r_j)}, & \text{ if } (l_i, l_j)\in \Sigma_{ij}^p; \\
0, & \text{otherwise.}
\end{cases}\]
 Let $D'$ be a diagonal matrix, whose $(i, l_i)$-th component is given by $p^{r_i}$.  Set
$$\Omega'=\{((i, l_i),(j, l_j))\; |\; (i, j)\in \Omega, (l_i, l_j)\in \Sigma_{ij}^p\},$$
which is an orientation of $C'$.

{\bf{Construction $(\ddag)$}} for the case ${\rm char}(\mathbb{K})=0$.\quad  This is very similar to the above construction. We put $d_i=c_i$ and replace $\Sigma_{ij}^p$ by
\begin{align}\label{equ:sigmaij}
\Sigma_{ij}=\{ (l_i,l_j)\; |\;  0\leq l_i< c_i, 0\leq l_j< c_j, l_i\equiv l_j ({\rm mod}\ {\rm gcd}(c_i, c_j))\}.
\end{align}
The off-diagonal entries of $C'$  is given by
 \[c'_{(i, l_i), (j, l_j)}=\begin{cases}
-{\rm gcd}(c_{ij},c_{ji}) , & \text{ if } (l_i, l_j)\in \Sigma_{ij}; \\
0, & \text{otherwise.}
\end{cases}\]
We observe that $C'$ is  symmetric and that  $D'$  is the identity matrix.

  We say that $\mathbb{K}$ has enough roots of unity for $D$, if for each $1\leq i\leq n$, the polynomial $t^{c_i}-1$ splits in $\mathbb{K}[t]$.

\begin{thm}\label{thm:CW}
Assume that $(C, D, \Omega)$ is a Cartan triple and that $\mathbb{K}$ has enough roots of unity for $D$. Keep the notation in Construction $(\ddag)$. Then there is an isomorphism of algebras
$$\mathbb{K}\mathcal{C}(C, D, \Omega)\simeq H(C', D', \Omega').$$
\end{thm}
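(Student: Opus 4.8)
The plan is to produce the isomorphism by applying the universal property of Proposition~\ref{prop:universal} to a carefully chosen functor out of the free EI category $\mathcal{C}(C,D,\Omega)=\mathcal{C}(Q^\circ,X)$, taking the target to be (the category whose algebra is) $H(C',D',\Omega')$, or equivalently to the EI category of Cartan type $\mathcal{C}(C',D',\Omega')$; once an equivalence of EI categories $\mathcal{C}(Q^\circ,X)\simeq \mathcal{C}(Q'^\circ,X')$ is in place, one passes to category algebras and invokes Definition~\ref{defn:GLS} together with the description of $\mathcal{C}(C',D',\Omega')$ in Definition~\ref{defn:CW} to identify $\mathbb{K}\mathcal{C}(C',D',\Omega')$ with $H(C',D',\Omega')$. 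The key input is the hypothesis that $\mathbb{K}$ has enough roots of unity for $D$: this lets one diagonalize the action of each cyclic automorphism group $X(i)=\langle\eta_i\mid\eta_i^{c_i}=1\rangle$ on every relevant $\mathbb{K}$-vector space, and it is precisely the decomposition of bimodules into isotypic (eigenvalue) components, indexed by characters of the $X(i)$, that produces the index set $M$ of Construction~$(\ddag)$. In characteristic $p$ one must remember that $\mathbb{K}X(i)$ is not semisimple: writing $c_i=p^{r_i}d_i$, the group algebra $\mathbb{K}X(i)$ has $d_i$ primitive idempotents (one per $d_i$-th root of unity), and each indecomposable projective/block is a local ring with Loewy length $p^{r_i}$; this is the source of both the index $(i,l_i)$ with $0\le l_i<d_i$ and the exponent $p^{r_i}$ recorded by $D'$.

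\medskip

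First I would set up notation: for each vertex $i$ of $Q^\circ$ pick a primitive $c_i$-th root of unity $\zeta_i\in\mathbb{K}$ in char $0$, or a primitive $d_i$-th root $\zeta_i$ in char $p$, and decompose $\mathbb{K}X(i)=\bigoplus_{l_i} \mathbb{K}X(i)\cdot f_{i,l_i}$ into blocks, where $f_{i,l_i}$ is the idempotent cutting out the character $\eta_i\mapsto\zeta_i^{l_i}$ (in char $p$, after passing through the surjection $\mathbb{K}X(i)\to\mathbb{K}[\eta_i]/(\eta_i^{d_i}-1)$ one lifts idempotents through the nilpotent kernel generated by $\varepsilon_i^{d_i}$, which has the right Loewy structure). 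These idempotents $f_{i,l_i}$ over all $i$ form a complete set of orthogonal idempotents in $\mathbb{K}\mathcal{C}(Q^\circ,X)$, and the Morita-equivalent basic algebra is the one supported on them; I would show its Gabriel quiver is exactly $Q'=Q(C',\Omega')$ by computing, for an arrow $\alpha^{(g)}_{ij}$ of $Q^\circ$, the decomposition of the bimodule $\mathbb{K}f_{i}\cdot\alpha^{(g)}_{ij}\cdot\mathbb{K}f_{j}$ — spanned by $X(i)\times_{G_{ij}}X(j)$ — under the left $X(i)$ and right $X(j)$ actions. The biset-product combinatorics force a compatibility of characters modulo $\mathrm{gcd}(c_i,c_j)$ (respectively, after the $p$-part bookkeeping, modulo $\mathrm{gcd}(d_i,d_j)$ with the shift by $p^{r_i},p^{r_j}$), which is exactly the definition of $\Sigma^p_{ij}$ (or $\Sigma_{ij}$), and the multiplicity of the resulting arrow $(j,l_j)\to(i,l_i)$ comes out to be $\mathrm{gcd}(c_{ij},c_{ji})$ copies times the $p$-power factor $p^{r_j-\min(r_i,r_j)}$, matching $c'_{(i,l_i),(j,l_j)}$. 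Then I would check the relations: the loop relation $\varepsilon_k^{c_k}=0$ of $H$ translates block-by-block into $\varepsilon'^{\,p^{r_k}}=0$ on each $f_{k,l_k}$, and the commutation relation $\varepsilon_i^{c_i/\mathrm{gcd}(c_i,c_j)}\alpha^{(g)}_{ij}=\alpha^{(g)}_{ij}\varepsilon_j^{c_j/\mathrm{gcd}(c_i,c_j)}$ becomes the analogous relation for $H(C',D',\Omega')$ after matching exponents; comparing generators and relations identifies the resulting quiver algebra with $H(C',D',\Omega')$ in the sense of Definition~\ref{defn:GLS}. Finally, acyclicity of $\Omega'$ (condition (O2)), which is built into Construction~$(\ddag)$, guarantees $Q'^\circ$ is acyclic so that $\mathcal{C}(C',D',\Omega')$ is a bona fide finite free EI category, and Proposition~\ref{prop:universal}(E1)--(E4) then upgrades the bijections on objects and unfactorizable Hom-sets into the desired equivalence of categories, whence the algebra isomorphism.

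\medskip

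The main obstacle I expect is the positive-characteristic bookkeeping in the bimodule decomposition: in char $0$ everything is semisimple and the isotypic decomposition of $X(i)\times_{G_{ij}}X(j)$ is a routine Mackey-type computation, but in char $p$ one must carefully track how the non-semisimple "$p$-part" of each $\mathbb{K}X(i)$ interacts with the induction/restriction along $G_{ij}\hookrightarrow X(i)$, $G_{ij}\hookrightarrow X(j)$. Concretely, the subtlety is that $G_{ij}$ has order $\mathrm{gcd}(c_i,c_j)$ whose $p$-part is $p^{\min(r_i,r_j)}$, and restricting a block of $\mathbb{K}X(i)$ (Loewy length $p^{r_i}$) down to $G_{ij}$ and inducing back up to $X(j)$ must be shown to yield a free $\mathbb{K}$-module of the predicted rank with the predicted eigenvalue shift — this is where the exponent $p^{r_j-\min(r_i,r_j)}$ in $c'$ and the congruence $l_ip^{r_i}\equiv l_jp^{r_j}\ (\mathrm{mod}\ \mathrm{gcd}(d_i,d_j))$ defining $\Sigma^p_{ij}$ genuinely come from. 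Once this single computation is carried out cleanly, everything else is formal: the rest is assembling idempotents, reading off the Gabriel quiver, and quoting Proposition~\ref{prop:universal}.
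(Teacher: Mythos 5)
The paper does not actually reprove this statement: its entire proof is a citation of \cite[Theorem~4.3]{CW}, and the only local content is the observation that the hypothesis here (each $t^{c_i}-1$ splits in $\mathbb{K}[t]$) is weaker than the one assumed in \cite{CW} but still supplies a primitive $(\prod_{i=1}^n d_i)$-th root of unity, so the argument of \cite[Section~5]{CW} carries through. Your second paragraph is a plausible reconstruction of that cited argument --- decompose each ${\rm Id}_i$ into the block idempotents $f_{i,l_i}$ of $\mathbb{K}X(i)\simeq\mathbb{K}[t]/((t^{d_i}-1)^{p^{r_i}})$, compute the corners $f_{i,l_i}\cdot\mathbb{K}(X(i)\times_{G_{ij}}X(j))\cdot f_{j,l_j}$ to extract the congruence defining $\Sigma^p_{ij}$ and the exponent $p^{r_j-\min(r_i,r_j)}$, then match generators and relations against Definition~\ref{defn:GLS} --- and you correctly isolate the single induction/restriction computation along $G_{ij}\hookrightarrow X(i)$, $G_{ij}\hookrightarrow X(j)$ that carries all the content. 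What you cannot address, not having seen \cite{CW}, is the one point the paper does argue, namely why the weakened root-of-unity hypothesis suffices.

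Two cautions about the framing in your first paragraph. First, the route ``establish an equivalence $\mathcal{C}(Q^\circ,X)\simeq\mathcal{C}(C',D',\Omega')$ of EI categories, then identify $\mathbb{K}\mathcal{C}(C',D',\Omega')$ with $H(C',D',\Omega')$ using Definitions~\ref{defn:CW} and~\ref{defn:GLS}'' is circular: that last identification is not a definition-chase but is itself an instance of the theorem being proved, in the special case where all symmetrizer entries are $p$-powers --- indeed, this is precisely how Proposition~\ref{prop:CW} is \emph{deduced from} Theorem~\ref{thm:CW}, not the other way around. Second, an equivalence of categories only yields a \emph{Morita} equivalence of category algebras (\cite[Proposition~2.2]{We07}), whereas the statement asserts an honest isomorphism; so Proposition~\ref{prop:universal} cannot be the engine here. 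The direct idempotent/presentation argument of your second paragraph is the one that delivers an isomorphism: since the $f_{i,l_i}$ form a complete set of primitive orthogonal idempotents summing to $1$ and both algebras are basic, the algebra ``supported on them'' is the whole category algebra, and no passage to a Morita-equivalent basic algebra is needed. One small slip there: the Gabriel-quiver arrow multiplicity between $(j,l_j)$ and $(i,l_i)$ comes out to ${\rm gcd}(c_{ij},c_{ji})$, the same as between $i$ and $j$; the factor $p^{r_j-\min(r_i,r_j)}$ is recorded in the Cartan entry $c'_{(i,l_i),(j,l_j)}$ via the rank of the corner bimodule over the local ring $f_{i,l_i}\mathbb{K}X(i)f_{i,l_i}$, not as extra arrows.
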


\begin{proof}
This result is due to \cite[Theorem~4.3]{CW}. We mention that the assumption here  on $\mathbb{K}$ is slightly weaker than the one therein. Since each polynomial $t^{c_i}-1$ splits, we infer that, in Construction $(\ddag)$ for each  case, the ground field $\mathbb{K}$ has a $(\prod_{i=1}^n d_i)$-th primitive root of unity. Then the proof of \cite[Theorem~4.3]{CW}, in particular, the argument in \cite[Section~5]{CW}, carries through under the weaker assumption here.
\end{proof}

We are interested in the following special case.

\begin{prop}\label{prop:CW}
Assume that ${\rm char}(\mathbb{K})=p>0$ and  that $(C, D, \Omega)$ is a Cartan triple such that each $c_i$ is a $p$-power. Then there is an isomorphism of algebras
$$\mathbb{K}\mathcal{C}(C, D, \Omega)\simeq H(C, D, \Omega),$$
which identifies ${\rm Span}_\mathbb{K}\{{\rm Id}_i, \eta_i, \cdots, \eta_i^{c_i-1}\}$ with ${\rm Span}_\mathbb{K}\{e_i, \varepsilon_{i}, \cdots, \varepsilon_{i}^{c_i-1}\}$.
\end{prop}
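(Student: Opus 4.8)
The plan is to deduce this from Theorem~\ref{thm:CW} (that is, \cite[Theorem~4.3]{CW}) by showing that, in the present situation, Construction~$(\ddag)$ does nothing, and then to read off the asserted identification of subspaces from the way that isomorphism is built in \cite[Section~5]{CW}.

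First I would check the hypothesis of Theorem~\ref{thm:CW}: since ${\rm char}(\mathbb{K})=p$ and each $c_i=p^{r_i}$ is a $p$-power, $t^{c_i}-1=(t-1)^{p^{r_i}}$ splits in $\mathbb{K}[t]$, so $\mathbb{K}$ has enough roots of unity for $D$. Next I would unravel Construction~$(\ddag)$ for ${\rm char}(\mathbb{K})=p>0$. Writing $c_i=p^{r_i}d_i$ with ${\rm gcd}(p,d_i)=1$ forces $d_i=1$; hence the index set $M=\bigsqcup_i\{(i,l_i)\mid 0\le l_i<d_i\}$ is canonically identified with $\{1,\dots,n\}$, and each $\Sigma_{ij}^p$ reduces to $\{(0,0)\}$ because the congruence $l_ip^{r_i}\equiv l_jp^{r_j}\ ({\rm mod}\ {\rm gcd}(d_i,d_j))$ becomes vacuous. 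Under this identification, $D'$ has $(i,i)$-entry $p^{r_i}=c_i$, so $D'=D$, and $\Omega'=\{((i,0),(j,0))\mid (i,j)\in\Omega\}$ is $\Omega$. For the off-diagonal entries of $C'$ one has $c'_{(i,0),(j,0)}=-{\rm gcd}(c_{ij},c_{ji})\,p^{\,r_j-\min(r_i,r_j)}$; assuming $r_i\le r_j$, the symmetrizability relation $p^{r_i}c_{ij}=p^{r_j}c_{ji}$ gives $|c_{ij}|=p^{r_j-r_i}|c_{ji}|$, whence ${\rm gcd}(c_{ij},c_{ji})=|c_{ji}|$ and $c'_{(i,0),(j,0)}=-|c_{ji}|\,p^{r_j-r_i}=-|c_{ij}|=c_{ij}$; symmetrically $c'_{(j,0),(i,0)}=c_{ji}$. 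Thus $C'=C$, so $(C',D',\Omega')=(C,D,\Omega)$, and Theorem~\ref{thm:CW} yields $\mathbb{K}\mathcal{C}(C,D,\Omega)\simeq H(C,D,\Omega)$.

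For the statement about subspaces, I would observe that ${\rm Span}_\mathbb{K}\{{\rm Id}_i,\eta_i,\dots,\eta_i^{c_i-1}\}$ is exactly the corner ${\rm Id}_i\,\mathbb{K}\mathcal{C}(C,D,\Omega)\,{\rm Id}_i=\mathbb{K}X(i)$ --- the category being EI, ${\rm Hom}(i,i)={\rm Aut}(i)=X(i)$, cyclic of order $c_i$ generated by $\eta_i$ --- and that, dually, ${\rm Span}_\mathbb{K}\{e_i,\varepsilon_i,\dots,\varepsilon_i^{c_i-1}\}=e_iH(C,D,\Omega)e_i$, since the only paths $i\to i$ in $Q(C,\Omega)$ are the powers of the loop $\varepsilon_i$, with $\varepsilon_i^{c_i}=0$ in $H$ and no smaller power vanishing. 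It then suffices to check that the isomorphism of \cite[Theorem~4.3]{CW} carries the vertex idempotent ${\rm Id}_i$ to $e_i$; this is exactly how it is constructed in \cite[Section~5]{CW}, where, the only $c_i$-th root of unity in $\mathbb{K}$ being $1$ (as $c_i$ is a $p$-power in characteristic $p$), the group algebra $\mathbb{K}X(i)\cong\mathbb{K}[t]/((t-1)^{c_i})$ is already local with the single idempotent ${\rm Id}_i$, and $\eta_i$ is sent to $1+\varepsilon_i$.

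I expect the only real obstacle to be this last point --- confirming that the formulas of \cite{CW}, written there over an arbitrary field with enough roots of unity, genuinely degenerate in characteristic $p$ so that the corner at ${\rm Id}_i$ goes to the corner at $e_i$; the equality $(C',D',\Omega')=(C,D,\Omega)$ is only bookkeeping with ${\rm gcd}$'s and the symmetrizer.
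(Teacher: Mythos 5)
Your proposal is correct and follows essentially the same route as the paper: verify that $\mathbb{K}$ has enough roots of unity for $D$, check that Construction~$(\ddag)$ returns $(C',D',\Omega')=(C,D,\Omega)$ (which the paper simply cites from \cite[Example~6.7]{CW} while you carry out the gcd bookkeeping explicitly), and then invoke Theorem~\ref{thm:CW} together with the explicit form of the isomorphism in \cite[Section~5]{CW} for the identification of the corner subspaces, consistent with $\theta(\varepsilon_i)=\eta_i-{\rm Id}_i$.
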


Here, ${\rm Span}_\mathbb{K}$ means the subspace spanned by the mentioned elements. Both ${\rm Id}_i$ and $\eta_i$ are viewed as automorphisms of $i$ in $\mathcal{C}(C, D, \Omega)$. Similarly, the trivial path $e_i$ and the loop $\varepsilon_i$ are viewed as elements in $H(C, D, \Omega)$.

\begin{proof}
The assumption on entries of $D$ implies that $(C', D', \Omega')=(C, D, \Omega)$, where we identify $(i, 0)\in M$ with $i$; see \cite[Example~6.7]{CW}. For the same reason, the polynomials $t^{c_i}-1$ splits, that is, $\mathbb{K}$ has enough roots of unity for $D$. Then the isomorphism follows from Theorem~\ref{thm:CW}. By the proof of \cite[Theorem~4.3]{CW}, the isomorphism clearly identifies the above two subspaces.
\end{proof}

\subsection{From quotient to Cartan type}\label{subset:qC}

 We study the situation of Corollary~\ref{cor:quiver}. Let $G$ be a finite group and let $\Delta$ be a finite acyclic quiver with a  $G$-action. We give conditions on when the quotient EI quiver $(\overline{\Delta}, \overline{U}_{\rm tr})$ is of Cartan type.

The following natural conditions will be imposed on the quiver $\Delta$.
\begin{enumerate}
\item[($\dagger 1$)] For each $i$, the stabilizer $G_i=\langle \xi_i\mid \xi_i^{a_i}=1\rangle$ is cyclic with order $a_i$.
\item[($\dagger 2$)] For each arrow $\alpha\colon i\rightarrow j$, we have that $\xi_i^{\frac{a_i}{{\rm gcd}(a_i,a_j)}}=\xi_j^{\frac{a_j}{{\rm gcd}(a_i,a_j)}}$ and both belong to $G_\alpha$.
\item[($\dagger 3$)] For each $g\in G$, we have $\xi_{g(i)}=g\xi_ig^{-1}$.
\end{enumerate}
The condition ($\dagger 3$) means that the choice of the specific  generators $\xi_i$ for $G_i$ is compatible with the $G$-action. It follows from ($\dagger 2$) that the inclusion $G_\alpha\subseteq G_i\cap G_j$ is an equality.

Associated to the $G$-action on $\Delta$, we will define a Cartan triple $(C, D, \Omega)$; compare \cite[Section~14.1]{Lus}. The rows and columns of $C$ and $D$ are indexed by the orbit set $\overline{\Delta}_0=\Delta_0/G$. For each $G$-orbit $\boldsymbol{i}$ of vertices,  the corresponding diagonal entry of $D$ is
$$c_{\boldsymbol{i}}=\frac{|G|}{|\boldsymbol{i}|},$$
where $|\boldsymbol{i}|$ denotes the cardinality of the $G$-orbit. For any vertices $\boldsymbol{i}$ and $\boldsymbol{j}$, we denote by $N_{\boldsymbol{i}, \boldsymbol{j}}$ the number of arrows in $\Delta$ between  the $G$-orbit $\boldsymbol{i}$ and $G$-orbit $\boldsymbol{j}$. The corresponding off-diagonal  entry of $C$ is given by
$$c_{\boldsymbol{i}, \boldsymbol{j}}=\frac{-N_{\boldsymbol{i}, \boldsymbol{j}}}{|\boldsymbol{j}|}.$$
The orientation $\Omega$ is consistent with that of $\overline{\Delta}$, that is, $(\boldsymbol{j}, \boldsymbol{i})\in \Omega$ if and only if there is an arrow from $\boldsymbol{i}$ to $\boldsymbol{j}$ in $\overline{\Delta}$.

By the equality $c_{\boldsymbol{i}}c_{\boldsymbol{i}, \boldsymbol{j}}=c_{\boldsymbol{j}}c_{\boldsymbol{j}, \boldsymbol{i}}$, we infer the following useful identity
\begin{align}\label{equ:cc}
\frac{-c_{\boldsymbol{i}, \boldsymbol{j}}}{{\rm gcd}(c_{\boldsymbol{i}, \boldsymbol{j}}, c_{\boldsymbol{j}, \boldsymbol{i}})}=\frac{c_{\boldsymbol{j}}}{{\rm gcd}(c_{\boldsymbol{i}}, c_{\boldsymbol{j}})}.
\end{align}

\begin{thm}\label{thm:quiver}
Assume that the $G$-action on $\Delta$ satisfies $(\dagger 1)\mbox{-}(\dagger 3)$ and that the associated Cartan triple is $(C, D, \Omega)$. Denote by $(Q^\circ, X)$ the corresponding EI quiver of Cartan type. Then there is an isomorphism of EI quivers
$$(\overline{\Delta}, \overline{U}_{\rm tr}) \simeq (Q^\circ, X).$$
Moreover, we have the following immediate consequences.
\begin{enumerate}
\item  There is an equivalence of categories
$$\mathcal{P}_\Delta\rtimes G\simeq \mathcal{C}(C, D, \Omega).$$
\item Assume that $\mathbb{K}$ has enough roots of unity for $D$ and that the Cartan triple $(C', D', \Omega')$ is given in Construction $(\ddag)$. Then the algebras $\mathbb{K}\Delta\# G$ and $H(C', D', \Omega')$    are Morita equivalent.
\item Assume that ${\rm char}(\mathbb{K})=p>0$ and that $G$ is a $p$-group. Then the algebras $\mathbb{K}\Delta\# G$ and $H(C, D, \Omega)$    are Morita equivalent.
\end{enumerate}
\end{thm}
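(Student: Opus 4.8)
The plan is to establish the isomorphism of EI quivers $(\overline{\Delta}, \overline{U}_{\rm tr}) \simeq (Q^\circ, X)$ directly, after which items (1)--(3) follow by combining earlier results. First I would choose the structure data $\iota_0, \iota_1, g_{(-)}$ for the quotient construction so that it is compatible with the generators $\xi_i$ provided by $(\dagger 1)$--$(\dagger 3)$; this is possible since $(\dagger 3)$ ensures the choice of generators is $G$-equivariant. Under these choices, Remark~\ref{rem:SAT} applies because the $G$-action on $(\Delta, U_{\rm tr})$ is trivial on the (trivial) assignment, so $\overline{U}_{\rm tr}(\boldsymbol{i}) = G_{\iota_0(\boldsymbol{i})}$ and $\overline{U}_{\rm tr}(\boldsymbol{\alpha}) = G_{\iota_0(\boldsymbol{i})} \times_{G_{\iota_1(\boldsymbol{\alpha})}} G_{\iota_0(\boldsymbol{j})}$. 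By $(\dagger 1)$, $G_{\iota_0(\boldsymbol{i})}$ is cyclic of order $a_{\iota_0(\boldsymbol{i})}$, and one computes $a_{\iota_0(\boldsymbol{i})} = |G_i| = |G|/|\boldsymbol{i}| = c_{\boldsymbol{i}}$ by the orbit--stabilizer theorem. Hence $\overline{U}_{\rm tr}(\boldsymbol{i})$ is a cyclic group of order $c_{\boldsymbol{i}}$, matching $X(\boldsymbol{i}) = \langle \eta_{\boldsymbol{i}} \mid \eta_{\boldsymbol{i}}^{c_{\boldsymbol{i}}} = 1\rangle$.

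Next I would match the arrows and the bisets. The arrows of $\overline{\Delta}$ between $\boldsymbol{i}$ and $\boldsymbol{j}$ number $N_{\boldsymbol{i},\boldsymbol{j}}$, which must be reconciled with the $Q^\circ$-arrows $\alpha^{(g)}_{\boldsymbol{i}\boldsymbol{j}}$ for $1 \leq g \leq {\rm gcd}(c_{\boldsymbol{i},\boldsymbol{j}}, c_{\boldsymbol{j},\boldsymbol{i}})$. The key combinatorial identity is that the number of $G$-orbits of arrows from $\boldsymbol{i}$ to $\boldsymbol{j}$ equals ${\rm gcd}(c_{\boldsymbol{i},\boldsymbol{j}}, c_{\boldsymbol{j},\boldsymbol{i}})$; here I would use the orbit-counting for the $G$-action on the set of arrows between the two orbits, together with the definitions $c_{\boldsymbol{i},\boldsymbol{j}} = -N_{\boldsymbol{i},\boldsymbol{j}}/|\boldsymbol{j}|$ and the symmetrizer relation $c_{\boldsymbol{i}}c_{\boldsymbol{i},\boldsymbol{j}} = c_{\boldsymbol{j}}c_{\boldsymbol{j},\boldsymbol{i}}$. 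For each such orbit $\boldsymbol{\alpha}$ with representative $\iota_1(\boldsymbol{\alpha})$, condition $(\dagger 2)$ gives $G_{\iota_1(\boldsymbol{\alpha})} = G_{\iota_0(\boldsymbol{i})} \cap G_{\iota_0(\boldsymbol{j})}$, which by $(\dagger 2)$ is exactly the cyclic subgroup generated by the common element $\xi_i^{a_i/{\rm gcd}(a_i,a_j)} = \xi_j^{a_j/{\rm gcd}(a_i,a_j)}$, of order ${\rm gcd}(a_{\iota_0(\boldsymbol{i})}, a_{\iota_0(\boldsymbol{j})}) = {\rm gcd}(c_{\boldsymbol{i}}, c_{\boldsymbol{j}})$. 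Comparing the inclusions $G_{\iota_1(\boldsymbol{\alpha})} \hookrightarrow G_{\iota_0(\boldsymbol{i})}$ and $\hookrightarrow G_{\iota_0(\boldsymbol{j})}$ with the maps $G_{\boldsymbol{i}\boldsymbol{j}} \hookrightarrow X(\boldsymbol{i})$, $\hookrightarrow X(\boldsymbol{j})$ defining the Cartan-type biset, and using the identity (\ref{equ:cc}) to identify the exponents $c_{\boldsymbol{i}}/{\rm gcd}(c_{\boldsymbol{i}},c_{\boldsymbol{j}})$, one gets an isomorphism of bisets $\overline{U}_{\rm tr}(\boldsymbol{\alpha}) = G_{\iota_0(\boldsymbol{i})} \times_{G_{\iota_1(\boldsymbol{\alpha})}} G_{\iota_0(\boldsymbol{j})} \cong X(\boldsymbol{i}) \times_{G_{\boldsymbol{i}\boldsymbol{j}}} X(\boldsymbol{j}) = X(\alpha^{(g)}_{\boldsymbol{i}\boldsymbol{j}})$, compatibly with the left and right actions described in Remark~\ref{rem:SAT}. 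Assembling these over all vertices and arrows yields the claimed isomorphism of EI quivers.

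For the consequences: (1) follows immediately by combining the EI quiver isomorphism with Corollary~\ref{cor:quiver} and Definition~\ref{defn:CW}, since the associated free EI categories of isomorphic EI quivers are equivalent. For (2), apply Proposition~\ref{prop:skew} to identify $\mathbb{K}(\mathcal{P}_\Delta \rtimes G)$ with $\mathbb{K}\mathcal{P}_\Delta \# G$, which is Morita equivalent to $\mathbb{K}\Delta \# G$ (the path algebra and path category algebra being Morita equivalent via the standard idempotent argument, or simply isomorphic after fixing conventions); then use part (1) together with Theorem~\ref{thm:CW} applied to $(C, D, \Omega)$ to obtain $\mathbb{K}\mathcal{C}(C,D,\Omega) \cong H(C', D', \Omega')$, and the category algebra of an equivalence of categories gives a Morita equivalence. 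For (3), when ${\rm char}(\mathbb{K}) = p > 0$ and $G$ is a $p$-group, each $c_{\boldsymbol{i}} = |G|/|\boldsymbol{i}|$ is a $p$-power, so Proposition~\ref{prop:CW} applies directly to give $\mathbb{K}\mathcal{C}(C,D,\Omega) \cong H(C,D,\Omega)$, whence the Morita equivalence. The main obstacle I anticipate is the bookkeeping in the biset isomorphism of the second paragraph: verifying that the identification respects the left $\overline{U}_{\rm tr}(\boldsymbol{j})$-action and right $\overline{U}_{\rm tr}(\boldsymbol{i})$-action on both sides simultaneously, and correctly tracking which power of the cyclic generator corresponds to the inclusion $G_{\iota_1(\boldsymbol{\alpha})} \hookrightarrow G_{\iota_0(\boldsymbol{i})}$ versus $G_{\boldsymbol{i}\boldsymbol{j}} \hookrightarrow X(\boldsymbol{i})$, where the identity (\ref{equ:cc}) is essential and must be invoked with the correct roles of $\boldsymbol{i}$ and $\boldsymbol{j}$.
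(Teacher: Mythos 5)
Your proposal is correct and follows essentially the same route as the paper: compare vertex groups via orbit--stabilizer, count arrow orbits using $|G_\alpha|=\gcd(c_{\boldsymbol{i}},c_{\boldsymbol{j}})$-index cosets and the identity (\ref{equ:cc}), match the bisets using the common element from $(\dagger 2)$, and then deduce (1)--(3) from Corollary~\ref{cor:quiver}, Proposition~\ref{prop:skew}, Theorem~\ref{thm:CW} and Proposition~\ref{prop:CW}. The one bookkeeping point you flag is exactly where the paper is careful: since $\iota_1(\boldsymbol{\alpha})$ runs from $g_{\boldsymbol{\alpha}}(\iota_0(\boldsymbol{i}))$ to $\iota_0(\boldsymbol{j})$, the relation in the biset product involves conjugation by $g_{\boldsymbol{\alpha}}$, and $(\dagger 3)$ is what turns $(\dagger 2)$ into the needed identity $g_{\boldsymbol{\alpha}}\xi_i^{a_i/\gcd(a_i,a_j)}g_{\boldsymbol{\alpha}}^{-1}=\xi_j^{a_j/\gcd(a_i,a_j)}$.
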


\begin{proof}
We first prove the isomorphism of EI quivers. Take two vertices $\boldsymbol{i}=Gi$ and $\boldsymbol{j}=Gj$ in $\overline{\Delta}$. We observe  $c_{\boldsymbol{i}}=a_i$ and $c_{\boldsymbol{j}}=a_j$. For each arrow $\alpha$ between $i$ and $j$ in $\Delta$, we have observed that $G_\alpha=G_i\cap G_j$, which is of order ${\rm gcd}(|G_i|, |G_j|)$. Then we have
$$|G\alpha|=\frac{|G|}{{\rm gcd}(c_{\boldsymbol{i}}, c_{\boldsymbol{j}})}.$$
It follows that the number of arrows between   $\boldsymbol{i}$ and $\boldsymbol{j}$ in $\overline{\Delta}$ equals
\begin{align*}
\frac{N_{\boldsymbol{i}, \boldsymbol{j}}}{|G\alpha|} &=\frac{-c_{\boldsymbol{i}, \boldsymbol{j}}\;  |\boldsymbol{j}|\;  {\rm gcd}(c_{\boldsymbol{i}}, c_{\boldsymbol{j}})}{|G|}\\
&=\frac{-c_{\boldsymbol{i}, \boldsymbol{j}}\;    {\rm gcd}(c_{\boldsymbol{i}}, c_{\boldsymbol{j}})}{c_{\boldsymbol{j}}}\\
&= {\rm gcd}(c_{\boldsymbol{i}, \boldsymbol{j}}, c_{\boldsymbol{j}, \boldsymbol{i}}).
\end{align*}
Here, the last equality uses (\ref{equ:cc}). Recall that the vertex set of $Q^\circ$ is bijective to the index set of rows of $C$, namely the vertex set $\overline{\Delta}_0$. By comparing the number of arrows, we identify $\overline{\Delta}$ with $Q^\circ$.

We now compare the assignments $\overline{U}_{\rm tr}$ and $X$. By ($\dagger 1$), we infer that $\overline{U}_{\rm tr}(\boldsymbol{i})=G_{\iota_0(\boldsymbol{i})}$ is cyclic of order $c_{\boldsymbol{i}}=a_i$. For each arrow $\boldsymbol{\alpha}\colon \boldsymbol{i}\rightarrow \boldsymbol{j}$ in $\overline{\Delta}$, we write $\iota_0(\boldsymbol{i})=i$ and $\iota_0(\boldsymbol{j})=j$. By (\ref{choice1-cond}), we have the arrow $\iota_1(\boldsymbol{\alpha})\colon g_{\boldsymbol{\alpha}}(i)\rightarrow j$ in $\Delta$. By ($\dagger 2$) and ($\dagger 3$), we have
$$g_{\boldsymbol{\alpha}}\xi_i^{\frac{a_i}{{\rm gcd}(a_i,a_j)}}g_{\boldsymbol{\alpha}}^{-1}=\xi_j^{\frac{a_j}{{\rm gcd}(a_i,a_j)}}.$$
Recall that $\overline{U}_{\rm tr}(\boldsymbol{\alpha})=G_i\times_{G_{\iota_1(\boldsymbol{\alpha})}}G_j$. In view of (\ref{equ:biset-prod}), the above identity implies that, in $\overline{U}_{\rm tr}(\boldsymbol{\alpha})$, we have
$$(\xi_i^{\frac{a_i}{{\rm gcd}(a_i,a_j)}}, 1_G)=(1_G, \xi_j^{\frac{a_j}{{\rm gcd}(a_i,a_j)}}).$$
This actually implies that the following map is well defined
$$\overline{U}_{\rm tr}(\boldsymbol{\alpha})\longrightarrow X(i)\times_{G_{ij}} X(j)=X(\boldsymbol{\alpha}), \quad (\xi_i^a, \xi_j^b)\mapsto (\eta_i^a, \eta_j^b).$$
The above map is bijective and respects the $(G_i, G_j)$-biset structures. We readily deduce that the assignment $\overline{U}_{\rm tr}$ is isomorphic to $X$, as required.

For (1), we recall that $\mathcal{C}(C, D, \Omega)=\mathcal{C}(Q^\circ, X)$. Then the equivalence of categories follows from the obtained isomorphism of EI quivers and Corollary~\ref{cor:quiver}.

For (2) and (3), we recall that the path algebra $\mathbb{K}\Delta$ is identified with the category algebra $\mathbb{K}\mathcal{P}_\Delta$ of the path category. By Proposition~\ref{prop:skew}, we identify $\mathbb{K}(\mathcal{P}_\Delta\rtimes G)$ with $\mathbb{K}\mathcal{P}_\Delta \# G$. Recall from \cite[Proposition~2.2]{We07} that the category algebras of two equivalent categories are Morita equivalent. Applying (1), we infer that $\mathbb{K}\mathcal{C}(C, D, \Omega)$ and $\mathbb{K}(\mathcal{P}_\Delta\rtimes G)$ are Morita equivalent. In summary, we have obtained that $\mathbb{K}\mathcal{C}(C, D, \Omega)$ and $\mathbb{K}\Delta\# G$ are Morit equivalent.

Now, the required statement in (2) follows from the isomorphism in Theorem~\ref{thm:CW}. For (3), we observe that each $c_{\boldsymbol{i}}$ is a $p$-power, as $G$ is a $p$-group. We apply the isomorphism in Proposition~\ref{prop:CW}.
\end{proof}

Although the conditions $(\dagger 1)\mbox{-}(\dagger 3)$ are technical,  as we will see,  natural examples are ubiquitous. The following construction is inspired by \cite[Section~5.3]{Serre}.

\begin{exm}\label{exm:ubi}
Let $n\geq 1$ and $G$ be a finite group. For each $1\leq i\leq n$, we fix $\xi_i\in G$ and assume that $\xi_i$ is of order $a_i$. The cyclic subgroup generated by $\xi_i$ is denoted by $H_i$. The elements $\xi_i$ may not be distinct. Denote by $G/{H_i}$ the set of left $H_i$-cosets, whose elements are denoted by $gH_i$.

We construct an acyclic quiver $\Delta$ as follows: the set $\Delta_0$ of vertices is a disjoint union $\bigsqcup_{i=1}^n G/{H_i}\times \{i\}$; only if  $i<j$ and $\xi_i^{\frac{a_i}{{\rm gcd}(a_i,a_j)}}=\xi_j^{\frac{a_j}{{\rm gcd}(a_i,a_j)}}$, each coset $g(H_i\cap H_j)$ is viewed as an arrow starting at $(gH_i, i)$ and terminating at $(gH_j, j)$. The natural action of $G$ on left cosets induces a $G$-action on $\Delta$. It is trivial to verify that  $(\dagger 1)\mbox{-}(\dagger 3)$ do hold for the $G$-action.
\end{exm}

The following example is our main concern.

\begin{exm}\label{exm:cyclic}
 Let $G=\langle \xi\mid \xi^a=1\rangle $ be a cyclic group of order $a$. Assume that $G$ acts on $\Delta$ such that $G_\alpha=G_{s(\alpha)}\cap G_{t(\alpha)}$ for each arrow $\alpha$ in $\Delta$. Then the conditions  $(\dagger 1)\mbox{-}(\dagger 3)$  are satisfied.

For each vertex $i$ with $|G_{i}|=a_i$, we take the generator $\xi_i$ to be $\xi^{\frac{a}{a_i}}$. Then it is direct to check that $(\dagger 1)\mbox{-}(\dagger 3)$  hold.
\end{exm}

\begin{rem}\label{rem:Cartan}
We observe that any Cartan triple does arise in the  situation of Example~\ref{exm:cyclic}. More precisely, given any Cartan triple $(C, D, \Omega)$, we will construct an acyclic quiver with a cyclic group action such that its associated Cartan triple is the given one; compare \cite[Section~14.1]{Lus}.

Assume that $c={\rm lcm}(c_1, c_2, \cdots, c_n)$ is the least common multiple of the entries of $D$. Set $d_i=\frac{c}{c_i}$. We construct an acyclic  quiver $\Delta$ as follows. The vertex set  and arrow set are given by
$$\Delta_0=\{(i,l_i)\mid 1\leq i\leq n, 0\leq l_i <d_i\},$$
and
$$\Delta_1=\{\alpha^{(g)}_{(i,l_i),(j,l_j)}:(j,l_j)\rightarrow (i,l_i)\mid (i,j)\in \Omega, \ (l_i,l_j)\in \Sigma_{ij},\  1\leq g\leq {\rm gcd}(c_{ij}, c_{ji}) \},$$
respectively, where $\Sigma_{ij}$ is defined in (\ref{equ:sigmaij}).  We observe the following identity $$|\Sigma_{ij}|=\frac{d_id_j}{{\rm gcd}(d_i, d_j)}=\frac{c}{{\rm gcd}(c_i, c_j)}=\frac{-c_{ij}d_j}{{\rm gcd}(c_{ij}, c_{ji})}.$$

Let $G=\langle \sigma \mid \sigma^c=1\rangle$ be a cyclic group of order $c$. Then $G$ acts on $\Delta$ such that
$$\sigma(i, l_i)=(i, l_i+1) \mbox{ and }\sigma(\alpha^{(g)}_{(i, l_i),(j, l_j)})=\alpha^{(g)}_{(i, l_i+1),(j, l_j+1)}.$$
Here, we identify $(i, d_i)$ with $(i, 0)$. This defines a $G$-action on $\Delta$. It is clear that $G_\alpha=G_{s(\alpha)}\cap G_{t(\alpha)}$ for each arrow $\alpha$ in $\Delta$. Then by  Example~\ref{exm:cyclic}, Theorem~\ref{thm:quiver} applies to this $G$-action. Moreover, the associated Cartan triple coincides with the given one.
\end{rem}

The following counter-example shows that the condition $G_{\alpha}=G_{s(\alpha)}\cap G_{t(\alpha)}$ in Example~\ref{exm:cyclic} is necessary.

\begin{exm}
 Let $\Delta$ be  the following Kronecker quiver.
	$$\xymatrix{
	1\ar@/^0.6pc/[r]^{\alpha}\ar@/_0.6pc/[r]_{\beta}&2 }$$
Let $G=\{1_G, \xi\}$ be a cyclic group of order $2$ which acts on $\Delta$  by interchanging  $\alpha$ and $\beta$. We observe that $G_{\alpha}=G_\beta=\{1_G\}\subsetneq G_1\cap G_2=G$. It follows that $(\dag2)$ is not satisfied.

The quotient quiver $\overline{\Delta}$ is of type $A_2$.
$$\xymatrix{\boldsymbol{1} \ar[r]^-{\boldsymbol{\alpha}} & \boldsymbol{2}
}$$
The assignment $\overline{U}_{\rm tr}$ is described as follows: $\overline{U}_{\rm tr}(\boldsymbol{1})=G=\overline{U}_{\rm tr}(\boldsymbol{2})$, and
$$\overline{U}_{\rm tr}(\boldsymbol{\alpha})=G\times G.$$
The quotient EI quiver $(\overline{\Delta}, \overline{U}_{\rm tr})$ is not of Cartan type.
\end{exm}

\section{Induced modules and folding}

In this final section, we first study induced modules on a skew group algebra. For a finite cyclic group action on a finite acyclic quiver, the main goal of the paper, Theorem~\ref{thm:categorify},  constructs a categorification of the folding projection between the relevant root lattices. In the Dynkin cases, the restriction of the categorification to indecomposable modules corresponds to the folding of positive roots; see Proposition~\ref{prop:Dynkin}.

\subsection{Generalities on induced modules}

Let $A$ be a finite dimensional $\mathbb{K}$-algebra, and let $G$ be a finite group acting on $A$ by algebra automorphisms. Denote by $A\# G$ the skew group algebra. We view $A$ as a subalgebra of $A\# G$ by identifying $a\in A$ with $a\#1_G\in A\#G$.

For a left $A$-module $M$, we define a left $A\#G$-module $M\#G$, the \emph{induced module},  as follows: $M\#G=M\otimes \mathbb{K}G$ as a vector space, and its left $A\#G$-action is given by
$$(a\#g).(m\#h)=(gh)^{-1}(a).m\# gh$$
for any $a\#g\in A\#G$ and $m\#h\in M\#G$. There is an isomorphism of left $A\# G$-modules
\begin{align}\label{iso:indu}
(A\# G)\otimes_A M\stackrel{\sim}\longrightarrow  M\#G, \quad (a\# g)\otimes m\mapsto g^{-1}(a).m\#g.
\end{align}

Similarly, for a right $A$-module $N$, we have a right $A\#G$-module $N\#G=N\otimes \mathbb{K}G$ such that its right $A\#G$-action is given by
$$(n\# h).(a\#g)=n.h(a)\#hg.$$
There is an isomorphism of right $A\#G$-modules
\begin{align*}
N\otimes_A(A\# G)\stackrel{\sim}\longrightarrow N\#G, \quad n\otimes (a\# g) \mapsto n.a\#g.
\end{align*}

For each left $A$-module $M$ and $g\in G$, the \emph{twisted} $A$-module ${^gM}$ is defined as follows: $^gM=M$ as a vector space, where an element $m\in M$ corresponds to ${^gm}\in {^gM}$;  its left $A$-action is given by
$$a.{^g{m}}={^g(g(a).m)}$$
for any $a\in A$. This yields the twisting endofunctor ${^g(-)}$ on $A\mbox{-mod}$.

The following facts are contained in \cite[Proposition~1.8]{RR}.

\begin{lem}\label{lem:G-orbit}
Keep the notation as above. Then the following two statements hold.
\begin{enumerate}
\item For each $h\in G$, the $A\#G$-modules $M\# G$ and $(^hM)\#G$ are isomorphic.
\item Assume that both $M$ and $M'$ are indecomposable $A$-modules. Then the $A\#G$-modules $M\#G$ and $M'\#G$ are isomorphic if and only if $M$ and ${^h(M')}$ are isomorphic for some $h\in G$.
\end{enumerate}
\end{lem}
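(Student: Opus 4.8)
The plan is to prove (1) by writing down an explicit isomorphism, and then to deduce (2) from (1) together with a Krull--Schmidt argument after restricting to $A$; this is essentially the content of \cite[Proposition~1.8]{RR}.

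For (1), I would define $\phi\colon M\#G\to ({}^hM)\#G$ by $\phi(m\#k)={}^hm\#kh$ for $m\in M$ and $k\in G$. This is visibly a $\mathbb{K}$-linear bijection, as it permutes the natural basis of the common underlying space $M\otimes\mathbb{K}G={}^hM\otimes\mathbb{K}G$. The only real point is to check that $\phi$ is $A\#G$-linear. Using the module structure $(a\#g).(m\#k)=(gk)^{-1}(a).m\,\#\,gk$ on $M\#G$ and the definition $a.{}^hm={}^h(h(a).m)$ of the twisted action, one computes directly that $\phi\big((a\#g).(m\#k)\big)$ and $(a\#g).\phi(m\#k)$ both equal ${}^h\big((gk)^{-1}(a).m\big)\,\#\,gkh$, the key identity being $h\big((gkh)^{-1}(a)\big)=(gk)^{-1}(a)$ inside $A$. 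I expect this bookkeeping with the group twists to be the only place where care is needed.

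For the ``if'' part of (2), I would use that $M\mapsto M\#G$ is functorial, so an isomorphism $M\cong {}^h(M')$ of $A$-modules induces an isomorphism $M\#G\cong({}^hM')\#G$ of $A\#G$-modules; composing with the isomorphism $({}^hM')\#G\cong M'\#G$ furnished by (1) applied to $M'$ gives $M\#G\cong M'\#G$.

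For the ``only if'' part, the strategy is to restrict along the inclusion $A\hookrightarrow A\#G$, $a\mapsto a\#1_G$. Writing $M\#G=\bigoplus_{g\in G}M\otimes g$, each summand $M\otimes g$ is an $A$-submodule on which $a\#1_G$ acts by $m\mapsto g^{-1}(a).m$; hence $M\otimes g\cong {}^{g^{-1}}M$ as $A$-modules, so $(M\#G)|_A\cong\bigoplus_{g\in G}{}^gM$, and likewise for $M'$. Thus an isomorphism $M\#G\cong M'\#G$ restricts to an isomorphism $\bigoplus_{g\in G}{}^gM\cong\bigoplus_{g\in G}{}^gM'$ of $A$-modules. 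Since each ${}^g(-)$ is an autoequivalence of $A\mbox{-mod}$, all the ${}^gM$ and ${}^gM'$ are indecomposable, so by the Krull--Schmidt theorem the two families agree as multisets of isomorphism classes; in particular $M={}^{1_G}M$ is isomorphic to ${}^h(M')$ for some $h\in G$, as required. The main obstacle here is simply keeping the conventions for $\#G$ and for the twist ${}^g(-)$ consistent so that this restriction decomposition comes out correctly; once that is settled, Krull--Schmidt does the rest.
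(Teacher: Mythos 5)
Your proof is correct and follows essentially the same route as the paper: the explicit isomorphism $m\#k\mapsto {}^hm\#kh$ for part (1), and for part (2) the restriction to $A$, the decomposition $M\#G\simeq\bigoplus_{g\in G}{}^gM$ (the paper phrases the summands as $M\#g^{-1}\cong{}^gM$, which matches your $M\otimes g\cong{}^{g^{-1}}M$), and Krull--Schmidt. The verifications you sketch, including the identity $h\bigl((gkh)^{-1}(a)\bigr)=(gk)^{-1}(a)$, are exactly the bookkeeping the paper leaves implicit.
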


\begin{proof}
For (1), we mention that the isomorphism
$$M\# G\longrightarrow (^hM)\#G$$
sends $m\#g$ to $(^hm)\#gh$.

In view of (1), it remains to prove the ``only if" part of (2). We have a decomposition
$M\#G=\bigoplus_{g\in G} M\# g^{-1}$ of $A$-modules; moreover, the direct summand $M\#g^{-1}$ is isomorphic to ${^gM}$ by identifying $m\#g^{-1}$ with ${^gm}$. We have an isomorphism of $A$-modules
$$M\#G\simeq \bigoplus_{g\in G} {^gM}.$$
Similarly, we have $M'\#G\simeq \bigoplus_{g\in G} {^g{M'}}$. Then the ``only if" part of (2) follows from the Krull-Schmidt theorem.\end{proof}

For an $A$-module $M$, we have a $G$-graded algebra
\begin{align}\label{equ:G-graded}
\bigoplus_{g\in  G} {\rm Hom}_A(M, {^gM})
\end{align}
whose product is given by
$$ff'={^h(f)}\circ f'$$
 for $f\colon M\rightarrow {^gM}$ and $f'\colon M\rightarrow {^hM}$. Here, we use the fact that $^h{(^gM)}={^{gh}M}$,  and observe that  $ff'\colon M\rightarrow {^{gh}M}$ is well defined.

The following isomorphism is well known; see \cite[Section~3]{RR} or \cite[Proposition~2.4]{Chen17}.

\begin{lem}\label{lem:orbit}
Keep the notation as above. Then there is an isomorphism of algebras
\begin{align*}
\bigoplus_{g\in  G} {\rm Hom}_A(M, {^gM})&\stackrel{\sim}\longrightarrow {\rm End}_{A\#G}(M\#G), \\
(f\colon M\rightarrow {^gM}) &\longmapsto (m\#h\mapsto {^{g^{-1}}f(m)}\#hg^{-1}).
\end{align*}
Here, ${^{g^{-1}}f(m)}$ means the element in $M$ that corresponds to $f(m)\in {^gM}$.
\end{lem}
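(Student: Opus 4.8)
The plan is to exhibit the stated map explicitly, check it is a $\mathbb{K}$-linear bijection by a dimension count, and then verify it is multiplicative, being careful with the various twisting conventions. First I would set $\Lambda=\bigoplus_{g\in G}{\rm Hom}_A(M,{}^gM)$ with the product $ff'={}^h(f)\circ f'$ for $f\colon M\to{}^gM$, $f'\colon M\to{}^hM$, as in \eqref{equ:G-graded}, and define $\Theta\colon\Lambda\to{\rm End}_{A\#G}(M\#G)$ on the homogeneous component indexed by $g$ by $\Theta(f)(m\#h)={}^{g^{-1}}f(m)\#hg^{-1}$, extended $\mathbb{K}$-linearly. The first thing to verify is that $\Theta(f)$ is actually $A\#G$-linear: using the $A\#G$-action on $M\#G$ and the definition of the twisted module ${}^gM$, one computes $\Theta(f)\bigl((a\#k).(m\#h)\bigr)$ and $(a\#k).\Theta(f)(m\#h)$ and checks they agree; this reduces to the $A$-linearity of $f\colon M\to{}^gM$ together with the identity ${}^k({}^gM)={}^{gk}M$ and the cocycle-type bookkeeping of the group elements $hg^{-1}$, $kh$, etc.

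Next I would establish bijectivity. Linearity of $\Theta$ is clear from the construction. For surjectivity and injectivity simultaneously, the cleanest route is a dimension count: since $M\#G=\bigoplus_{g\in G}M\#g^{-1}$ as vector spaces and each $M\#g^{-1}$ is identified (as in the proof of Lemma~\ref{lem:G-orbit}) with ${}^gM$ as an $A$-module, the adjunction isomorphism \eqref{iso:indu} gives
$${\rm End}_{A\#G}(M\#G)\cong{\rm Hom}_{A\#G}\bigl((A\#G)\otimes_A M,\,M\#G\bigr)\cong{\rm Hom}_A\bigl(M,\,M\#G\bigr)\cong\bigoplus_{g\in G}{\rm Hom}_A(M,{}^gM),$$
so $\dim_{\mathbb{K}}{\rm End}_{A\#G}(M\#G)=\dim_{\mathbb{K}}\Lambda<\infty$. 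Hence it suffices to prove $\Theta$ is injective: if $\Theta(f)=0$ for a homogeneous $f$ of degree $g$, then ${}^{g^{-1}}f(m)\#hg^{-1}=0$ for all $m,h$, forcing $f(m)=0$ for all $m$, i.e.\ $f=0$; for a general element, the $G$-grading of $M\#G$ (tracking the $\mathbb{K}G$-component) separates the homogeneous pieces of $\Theta$, so $\Theta$ is injective on all of $\Lambda$.

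Finally I would check multiplicativity: for homogeneous $f\colon M\to{}^gM$ and $f'\colon M\to{}^hM$ one must verify $\Theta(f)\circ\Theta(f')=\Theta(ff')=\Theta({}^h(f)\circ f')$. Unwinding, $\bigl(\Theta(f)\circ\Theta(f')\bigr)(m\#k)=\Theta(f)\bigl({}^{h^{-1}}f'(m)\#kh^{-1}\bigr)={}^{g^{-1}}f\bigl({}^{h^{-1}}f'(m)\bigr)\#kh^{-1}g^{-1}$, while $\Theta({}^h(f)\circ f')(m\#k)={}^{(gh)^{-1}}\bigl(({}^h f\circ f')(m)\bigr)\#k(gh)^{-1}$; since $(gh)^{-1}=h^{-1}g^{-1}$ the group components match, and the module components agree once one carefully uses the definitions of the twisted map ${}^h(f)\colon {}^hM\to{}^h({}^gM)={}^{gh}M$ and of the twist functor. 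I expect this last compatibility—getting the twist conventions in the product $ff'={}^h(f)\circ f'$ to line up exactly with the composition on the ${\rm End}$ side, including that $\Theta$ sends the identity (the degree-$1_G$ component equal to ${\rm id}_M$) to ${\rm id}_{M\#G}$—to be the only genuinely delicate point; everything else is a routine, if notationally heavy, verification. Alternatively, multiplicativity can be deduced more conceptually by identifying $\Lambda$ with ${\rm End}_A$ of the restriction of $M\#G$ equipped with its natural $G$-action and invoking \cite[Section~3]{RR}, but I would prefer to give the direct computation for self-containedness.
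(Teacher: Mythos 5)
Your proposal is correct and takes essentially the same route as the paper: the paper's proof likewise rests on the identification of $M\#g^{-1}$ with ${}^gM$ as a left $A$-module together with \eqref{iso:indu} and the tensor--Hom adjunction, leaving the explicit formula, linearity and multiplicativity as routine. The additional verifications you carry out (well-definedness, injectivity plus the dimension count, and the unwinding of $ff'={}^h(f)\circ f'$ against composition) are precisely the details the paper omits, and they check out.
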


\begin{proof}
We observe that $M\# g^{-1}$ is naturally identified with ${^gM}$ as a left $A$-module. Then the above isomorphism follows from (\ref{iso:indu}) and the Hom-tensor adjunction.
\end{proof}

Denote by $D={\rm Hom}_{\mathbb{K}}(-, \mathbb{K})$ the duality of vector spaces, and by ${\rm Tr}_A(-)$ the transpose of left or right $A$-modules. Recall that the Auslander-Reiten translations are given by $\tau_A=D{\rm Tr}_A$ and $\tau_A^{-}={\rm Tr}_AD$; see \cite[IV]{ARS}.

The following general facts seem to be well known; compare \cite[Lemma~4.2]{RR}.

\begin{lem}\label{lem:Tr}
Let $M$ and $N$ be  a left $A$-module and a right $A$-module, respectively.
\begin{enumerate}
\item There are isomorphisms of right $A\# G$-modules: $DM\#G \simeq D(M\# G)$ and ${\rm Tr}_A(M)\#G\simeq {\rm Tr}(M\#G)$.
    \item There are isomorphisms of left $A\#G$-modules: $DN\#G\simeq D(N\#G)$ and ${\rm Tr}_A(N)\#G\simeq {\rm Tr}(N\#G)$.
        \item There are isomorphisms of left $A\#G$-modules $\tau_A(M)\#G\simeq \tau(M\#G)$ and $\tau^{-}_A(M)\#G\simeq \tau^{-}(M\# G)$.
\end{enumerate}
Here, ${\rm Tr}$ and $\tau$ denote the transpose and Auslander-Reiten translation of $A\# G$-modules, respectively.
\end{lem}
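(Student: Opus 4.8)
The plan is to prove Lemma~\ref{lem:Tr} by first establishing the two duality isomorphisms, which are easy and formal, and then deriving the transpose isomorphisms from a compatibility of projective presentations with the induction functor; the Auslander--Reiten statements in (3) then follow by composing. Throughout I would use the adjoint pair $(A\#G)\otimes_A -\dashv {\rm res}$ and its right-module analogue, together with the key identities $(A\#G)\otimes_A M\cong M\#G$ and $N\otimes_A(A\#G)\cong N\#G$ recorded in the excerpt. The first observation to nail down is that $A\#G$ is free, hence projective, as both a left and a right $A$-module (it is $A\otimes\mathbb{K}G$ on each side up to the obvious twist), so that the induction functors $-\#G$ on both sides are exact and send projectives to projectives; moreover $A\#G$ is a symmetric-enough bimodule over $A$ that restriction also preserves projectivity in the relevant direction. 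I also need the elementary fact that $A\#G$ is a Frobenius extension of $A$ (indeed a free Frobenius extension), which gives that induction and coinduction from $A$ to $A\#G$ agree up to the twist by $G$; this is what makes both the duality and the transpose interact so cleanly with $-\#G$.

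For part (1), the isomorphism $DM\#G\cong D(M\#G)$ is a direct computation: both sides are $DM\otimes\mathbb{K}G$ as vector spaces, and one checks the right $A\#G$-actions match under $\phi\#g\mapsto(\text{the functional }m\#h\mapsto \ldots)$ — this is essentially the statement that $D$ intertwines the left and right induction functors, which follows from $D\cong{\rm Hom}_{\mathbb{K}}(-,\mathbb{K})$ commuting with $-\otimes\mathbb{K}G$ and the definition of the twisted actions. For ${\rm Tr}_A(M)\#G\cong{\rm Tr}(M\#G)$, I would take a minimal projective presentation $P_1\xrightarrow{f}P_0\to M\to 0$ over $A$, apply the exact functor $-\#G$ to get $P_1\#G\xrightarrow{f\#G}P_0\#G\to M\#G\to 0$, note that $P_i\#G\cong(A\#G)\otimes_A P_i$ is projective over $A\#G$ and that this presentation is again minimal (the induced map $f\#G$ has image inside ${\rm rad}(A\#G)\cdot P_0\#G$ because ${\rm rad}(A)\#G\subseteq{\rm rad}(A\#G)$, using that $\mathbb{K}G$ contributes nothing to the radical modulo semisimplicity issues — here one should be slightly careful in modular characteristic, but minimality is not actually needed: ${\rm Tr}$ of any projective presentation computes ${\rm Tr}$ up to projective summands, and those vanish under the stable considerations, or one argues directly that $(-)^*={\rm Hom}_A(-,A)$ satisfies $P^*\#G\cong(P\#G)^*$ for $P$ projective). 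Applying $(-)^*_A$ and $(-)^*_{A\#G}$ respectively and comparing cokernels, using the natural isomorphism ${\rm Hom}_A(P,A)\#G\cong{\rm Hom}_{A\#G}(P\#G,A\#G)$ for finitely generated projective $P$ (this is the crux: it is a twisted-bimodule adjunction identity), gives the result. Part (2) is the mirror image with the roles of left and right modules exchanged, proved identically.

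Part (3) is then formal: $\tau_A(M)\#G=(D{\rm Tr}_AM)\#G\cong D({\rm Tr}_A(M)\#G)\cong D({\rm Tr}(M\#G))=\tau(M\#G)$, where the first isomorphism is part (2) applied to the right module ${\rm Tr}_AM$ and the second is part (1)'s transpose statement; similarly $\tau^{-}$ uses the two identities in the other order. The main obstacle I anticipate is the bookkeeping in the natural isomorphism ${\rm Hom}_A(P,A)\#G\cong{\rm Hom}_{A\#G}(P\#G,A\#G)$ for projective $P$ and its naturality in $P$ — getting the twisted $G$-action on the left-hand side to match the $A\#G$-bimodule structure on the right requires writing out the Frobenius/adjunction isomorphisms carefully and tracking several inverses of group elements, exactly the kind of routine-but-error-prone verification that the excerpt elsewhere dispatches with ``we omit the routine verification.'' Everything else is either a vector-space identity with a twisted action checked on generators, or a diagram chase comparing cokernels of $(-)^*$ applied to an induced presentation.
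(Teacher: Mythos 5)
Your proposal is correct and follows essentially the same route as the paper's proof: the explicit duality isomorphism $DM\#G\simeq D(M\#G)$, the natural isomorphism ${\rm Hom}_A(P,A)\#G\simeq {\rm Hom}_{A\#G}(P\#G,A\#G)$ applied to a minimal projective presentation (with minimality preserved because ${\rm rad}(A)\#G\subseteq{\rm rad}(A\#G)$), and part (3) obtained formally by composing (1) and (2). The Frobenius-extension framing is extra packaging, but the computational core coincides with the paper's argument.
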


\begin{proof}
We only prove (1), because the proof of (2) is similar, and that (3) follows immediately from (1) and (2).

The first isomorphism is given as follows
\begin{align*}
DM\#G  &\stackrel{\sim}\longrightarrow D(M\#G)\\
\theta\#g &\longmapsto (m\#h\mapsto \theta(m)\delta_{h, g^{-1}}).
\end{align*}
Here, $\delta$ is the Kronecker symbol. For the second one, we first observe a natural isomorphism
\begin{align*}
c_P\colon {\rm Hom}_A(P, A)\#G  &\stackrel{\sim}\longrightarrow {\rm Hom}_{A\#G}(P\#G, A\#G)\\
                \theta\#g & \longmapsto (p\# h\mapsto h(\theta(p))\#hg)
\end{align*}
 of right $A\#G$-modules. Take a minimal projective presentation $P_1\rightarrow P_0\rightarrow M\rightarrow 0$. Recall that the transpose ${\rm Tr}_A(M)$ is defined by the following exact sequence
 \begin{align}\label{equ:Tr}
 {\rm Hom}_A(P_0, A)\longrightarrow {\rm Hom}_A(P_1, A)\longrightarrow {\rm Tr}_A(M)\longrightarrow 0.
 \end{align}

 As ${\rm rad}(A)\#G\subseteq {\rm rad}(A\#G)$, we infer that
 $$P_1\#G \longrightarrow P_0\#G\longrightarrow M\#G \longrightarrow 0$$
 is a minimal projective presentation of $M\#G$. Then the lower exact row of the following commutative diagram follows from the definition of ${\rm Tr}(M\#G)$. The upper exact row is obtained by applying $-\#G$ to (\ref{equ:Tr}).
 \[
 \xymatrix{
 {\rm Hom}_A(P_0, A)\# G \ar[r] \ar[d]_-{c_{P_0}}  &{\rm Hom}_A(P_1, A)\#G \ar[d]_{c_{P_1}}\ar[r] &{\rm Tr}_A(M)\#G \ar[r] & 0\\
 {\rm Hom}_{A\#G}(P_0\# G, A\#G) \ar[r] & {\rm Hom}_{A\#G}(P_1\#G, A\#G) \ar[r] & {\rm Tr}(M\#G) \ar[r] & 0
 }\]
 Then the required isomorphism follows immediately.
\end{proof}

Recall that a finite dimensional algebra $B$ is local provided that  $B/{{\rm rad}(B)}$ is a division algebra. Following \cite[p.65]{ARS}, we say that $B$  is \emph{elementary} if  $B/{{\rm rad}(B)}$ is isomorphic to a product of $\mathbb{K}$.  We observe that a finite dimensional algebra $B$  is local and elementary  if and only if $B/{{\rm rad}(B)}$ is isomorphic to $\mathbb{K}$.

Recall from \cite[Section~1.4]{NV} that a $G$-graded algebra $\Gamma=\bigoplus_{g\in G}\Gamma_g$ is a \emph{crossed product} if each homogeneous component $\Gamma_g$ contains an invertible element. Such a crossed product $\Gamma$ is often denoted by $B\ast G$ with $B=\Gamma_{(1_G)}$.

\begin{lem}\label{lem:le}
Assume that $\mathbb{K}$ is perfect with  ${\rm char}(\mathbb{K})=p>0$ and that $G$ is a finite $p$-group. Let $B$ be a finite dimensional algebra which is local and elementary. Then any crossed product $B\ast G$, as an ungraded algebra,  is local and elementary.
\end{lem}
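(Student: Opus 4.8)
The plan is to argue by induction on $|G|$, reducing everything modulo a nilpotent ideal and finishing with the classical observation that, over a perfect field of characteristic $p$, one has $\mathbb{K}[X]/(X^p-c)\cong\mathbb{K}[Y]/(Y^p)$ for any $c\in\mathbb{K}^\times$. The base case $G=\{1_G\}$ is trivial, since then $B\ast G=B$. For the inductive step I would pick a normal subgroup $N\trianglelefteq G$ with $[G:N]=p$ (a maximal subgroup of the $p$-group $G$) and exploit the quotient $G\to G/N\cong C_p$: grouping the homogeneous components of $\Gamma:=B\ast G$ according to cosets of $N$ makes $\Gamma$ into a $C_p$-graded algebra whose identity component $B':=\bigoplus_{n\in N}\Gamma_n$ is itself a crossed product $B\ast N$ (with the same $B$), and each of the $p$ graded components of $\Gamma$ contains a unit. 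Thus $\Gamma=B'\ast C_p$ is again a crossed product, and the induction hypothesis (applied to $B\ast N$, as $|N|<|G|$) yields that $B'$ is local and elementary, i.e.\ $B'/{\rm rad}(B')\cong\mathbb{K}$.

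Next I would reduce modulo a nilpotent ideal. Conjugation by a homogeneous unit of $\Gamma$ lying in the component $\Gamma_{\bar g}$ carries $B'=\Gamma_{\bar 1}$ into $\Gamma_{\bar g\bar 1\bar g^{-1}}=\Gamma_{\bar 1}=B'$, and (by the same argument applied to $\bar g^{-1}$) is therefore an algebra automorphism of $B'$; hence it preserves ${\rm rad}(B')$. It follows that $J:={\rm rad}(B')\,\Gamma=\bigoplus_{\bar g\in C_p}{\rm rad}(B')\,\Gamma_{\bar g}$ is a two-sided ideal of $\Gamma$ with $\Gamma/J\cong(B'/{\rm rad}(B'))\ast C_p\cong\mathbb{K}\ast C_p$, and, by sliding the homogeneous units to one side, $J$ is nilpotent because ${\rm rad}(B')$ is. Consequently $J\subseteq{\rm rad}(\Gamma)$ and $\Gamma/{\rm rad}(\Gamma)\cong(\Gamma/J)/{\rm rad}(\Gamma/J)$, so it suffices to show that the $p$-dimensional crossed product $\mathbb{K}\ast C_p$ is local and elementary.

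For this last point, choose a unit $w$ in the degree-one component of $\mathbb{K}\ast C_p$; then $\{1,w,\dots,w^{p-1}\}$ is a $\mathbb{K}$-basis and $w^p=c\cdot 1$ for some $c\in\mathbb{K}^\times$, so $\mathbb{K}\ast C_p\cong\mathbb{K}[X]/(X^p-c)$. This is exactly where perfectness of $\mathbb{K}$ is used: writing $c=b^p$ with $b\in\mathbb{K}^\times$, we get $X^p-c=(X-b)^p$, whence $\mathbb{K}\ast C_p\cong\mathbb{K}[Y]/(Y^p)$, which is local with residue algebra $\mathbb{K}$. Combining the two reductions gives $\Gamma/{\rm rad}(\Gamma)\cong\mathbb{K}$, completing the induction. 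The steps involving the grading — that $B\ast N$ and $B'\ast C_p$ are crossed products, that conjugation by a homogeneous unit is an automorphism of $B'$, and the nilpotency of $J$ — are routine bookkeeping; the one place the hypotheses genuinely bite is the extraction of a $p$-th root of $c$ (which fails over imperfect fields, as $\mathbb{K}[X]/(X^p-t)$ over $\mathbb{K}=\mathbb{F}_p(t)$ shows), and apart from organizing these reductions cleanly I do not foresee any real difficulty.
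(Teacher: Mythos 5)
Your proposal is correct and follows essentially the same route as the paper's proof: induct via a normal subgroup $N$ of index $p$ to reduce to a crossed product of a local elementary algebra with $C_p$, pass modulo the nilpotent ideal generated by the radical of the identity component to reduce to $\mathbb{K}\ast C_p$, and use perfectness to identify $\mathbb{K}\ast C_p$ with $\mathbb{K}[t]/(t^p)$. The only differences are cosmetic (order of the two reductions).
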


\begin{proof}
Take a normal subgroup $N$ of $G$ such that $G/N$ is cyclic of order $p$. Then $B\ast G=\bigoplus_{g\in G} B_g$ is naturally $G/N$-graded
$$B\ast G=\bigoplus_{x\in G/N} (\bigoplus_{g\in x} B_g).$$
Under this new grading, it is also a cross product. In other words, we have
$$B\ast G=(B\ast N)\ast G/N.$$
 By induction,  it suffices to prove the statement for the case where $G$ is cyclic of order $p$.

Assume now that $G$ is cyclic of order $p$. We will prove that $B\ast G$ is local and elementary. We will first deal with a special case.

We claim that any crossed product $\mathbb{K}\ast G$  is always local and elementary. Take a generator $g$ of $G$ and an invertible element $u_g$ in $(\mathbb{K}\ast G)_g$. We have $(u_g)^p=\mu \in \mathbb{K}$ for some nonzero $\mu \in \mathbb{K}$. Since $\mathbb{K}$ is perfect, there is some nonzero $\lambda\in \mathbb{K}$ satisfying $\lambda^p=\mu$.  Now the algebra homomorphism $$\mathbb{K}[t]/(t^p)\longrightarrow \mathbb{K}\ast G,$$
sending $t$ to $\lambda^{-1}u_g-1$, is an isomorphism, proving the claim.

For the general case, we observe that  each homogeneous component $B_h$ of $B\ast G$ is a free $B$-module on each side. We observe that $\bigoplus_{h\in G} {\rm rad}(B_h)$ is a two-sided nilpotent ideal of $B\ast G$. Therefore, we have
$$\bigoplus_{h\in G} {\rm rad}(B_h)\subseteq {\rm rad}(B\ast G).$$
Recall that $\mathbb{K}\simeq B/{{\rm rad}(B)}$.  Combining the following obvious isomorphism
 $$B\ast G/{\bigoplus_{h\in G} {\rm rad}(B_h)}\simeq \mathbb{K}\ast G$$
 and the above claim, we infer that $B\ast G$ is local and elementary.
\end{proof}

\begin{lem}\label{lem:lelocal}
Let ${\rm char}(\mathbb{K})=p>0$ and $G$ be a $p$-group. Then the following statements hold.
\begin{enumerate}
\item Assume that $\mathbb{K}$ is perfect. Then any crossed product $\mathbb{K}\ast G$ is local and elementary.
\item Assume that $G$ is cyclic. Then any crossed product $\mathbb{K}\ast G$ is local.
\end{enumerate}
\end{lem}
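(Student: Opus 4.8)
The plan is to reduce both statements to the special case already proven inside the proof of Lemma~\ref{lem:le}, namely that any crossed product $\mathbb{K}\ast G$ with $G$ cyclic of order $p$ is local and elementary. For part~(1), I would argue by induction on $|G|$, exactly mirroring the reduction used at the beginning of the proof of Lemma~\ref{lem:le}: pick a normal subgroup $N\trianglelefteq G$ with $G/N$ cyclic of order $p$, and observe that a crossed product $\mathbb{K}\ast G$ can be regraded as $(\mathbb{K}\ast N)\ast G/N$, which is again a crossed product. By the inductive hypothesis $B:=\mathbb{K}\ast N$ is local and elementary, so Lemma~\ref{lem:le} applies to the crossed product $B\ast G/N$ and yields that $\mathbb{K}\ast G$ is local and elementary. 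The base case $|G|=p$ is precisely the claim established in the proof of Lemma~\ref{lem:le} (using that $\mathbb{K}$, being perfect of characteristic $p$, has $p$-th roots, so $\lambda^{-1}u_g-1$ generates a copy of $\mathbb{K}[t]/(t^p)$). In fact part~(1) is essentially a restatement of the special case inside Lemma~\ref{lem:le}, so the proof is little more than a pointer back to that argument together with the inductive reduction; I would keep it to two or three sentences.

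For part~(2), where $G$ is an arbitrary \emph{cyclic} $p$-group (not necessarily with $\mathbb{K}$ perfect), the conclusion is weaker: only locality is asserted, not elementariness. Here I would argue directly. Write $G=\langle g\rangle$ of order $p^a$ and pick an invertible homogeneous element $u\in (\mathbb{K}\ast G)_g$; then $\mathbb{K}\ast G=\bigoplus_{k=0}^{p^a-1}\mathbb{K}u^k$ with $u^{p^a}=\mu$ for some nonzero $\mu\in\mathbb{K}$. Thus $\mathbb{K}\ast G\cong \mathbb{K}[t]/(t^{p^a}-\mu)$ as $\mathbb{K}$-algebras. Over a field of characteristic $p$ one has the factorization $t^{p^a}-\mu=(t^{p^{a-1}}-\nu)^p$ whenever $\mu=\nu^p$; iterating, $t^{p^a}-\mu$ has a single irreducible factor up to multiplicity (its radical is irreducible), because the Frobenius is injective on $\mathbb{K}$ so $\mu$ has at most one $p^a$-th root in any extension. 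Consequently $\mathbb{K}[t]/(t^{p^a}-\mu)$ has a unique maximal ideal, i.e. it is local. Concretely, ${\rm rad}$ is the ideal generated by the image of the irreducible factor of $t^{p^a}-\mu$, and the quotient is the (possibly inseparable) field extension $\mathbb{K}[t]/(\text{that irreducible factor})$.

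The main obstacle, and the only place requiring genuine care, is the ring-theoretic fact in part~(2) that $\mathbb{K}[t]/(t^{p^a}-\mu)$ is local when $\mathrm{char}(\mathbb{K})=p$. The clean way to see this is: the ideal $(t^{p^a}-\mu)$ has radical equal to $(f)$ where $f$ is the unique monic irreducible polynomial dividing $t^{p^a}-\mu$ — unique because if $\mu=\nu^p$ in $\overline{\mathbb{K}}$ then $\nu$ is the \emph{only} $p$-th root of $\mu$ there, so all roots of $t^{p^a}-\mu$ in $\overline{\mathbb{K}}$ coincide, forcing $t^{p^a}-\mu$ to be a power of a single irreducible. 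A commutative Artinian ring with a unique minimal prime is local, so we are done. One should note that elementariness can genuinely fail here when $\mathbb{K}$ is imperfect, since the residue field $\mathbb{K}[t]/(f)$ may be a proper purely inseparable extension of $\mathbb{K}$; this is exactly why part~(2) only claims locality. Throughout, I would invoke Lemma~\ref{lem:le} for part~(1) rather than re-proving anything, and for part~(2) I would present the isomorphism $\mathbb{K}\ast G\cong\mathbb{K}[t]/(t^{p^a}-\mu)$ explicitly and then quote the factorization argument.
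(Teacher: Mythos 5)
Your proposal is correct and follows essentially the same route as the paper: part (1) is deduced from Lemma~\ref{lem:le} (the paper simply notes it is the special case $B=\mathbb{K}$, so your extra induction is redundant), and part (2) rests on the same isomorphism $\mathbb{K}\ast G\cong\mathbb{K}[t]/(t^{q}-\mu)$ with $q=|G|$. The only difference is that the paper quotes the locality of $\mathbb{K}[t]/(t^{q}-\mu)$ as a standard fact, whereas you supply the (correct) justification via the uniqueness of $p$-power roots in characteristic $p$.
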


\begin{proof}
Since (1) is a special case of Lemma~\ref{lem:le}, we only prove (2). Assume that $|G|=q$ for some $p$-power $q$. Take a generator $g$ of $G$ and an invertible element $u_g$ in $(\mathbb{K}\ast G)_g$. We have $(u_g)^q=\mu \in \mathbb{K}$ for some nonzero $\mu \in \mathbb{K}$. We observe a $\mathbb{K}$-algebra isomorphism
$$\mathbb{K}[t]/(t^q-\mu)\stackrel{\sim}\longrightarrow \mathbb{K}\ast G, \quad t\mapsto u_g.$$
Then the required statement follows from a standard fact: the algebra $\mathbb{K}[t]/(t^q-\mu)$ is always local.
\end{proof}

Let us come back to the situation where a finite group $G$ acts on a finite dimensional algebra $A$.

\begin{prop}\label{prop:indec}
Let  ${\rm char}(\mathbb{K})=p>0$ and  $G$ be a finite  $p$-group. Assume that $M$ is a left $A$-module such that ${\rm End}_A(M)$ is local and elementary. Then the following two statements hold.
\begin{enumerate}
\item Assume that $\mathbb{K}$ is perfect. Then ${\rm End}_{A\#G}(M\#G)$ is  local and  elementary.
\item Assume that $G$ is cyclic. Then ${\rm End}_{A\#G}(M\#G)$ is local.
\end{enumerate}
In both cases, the $A\#G$-module $M\#G$ is indecomposable.
\end{prop}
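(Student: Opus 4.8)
The plan is to reduce everything to the structure of $\Gamma:=\mathrm{End}_{A\#G}(M\#G)$, which by Lemma~\ref{lem:orbit} is isomorphic, as a $G$-graded algebra, to $\bigoplus_{g\in G}\mathrm{Hom}_A(M,{}^gM)$ with identity component $B:=\mathrm{End}_A(M)$, local and elementary by hypothesis. A finite dimensional algebra is local (resp.\ local and elementary) exactly when its quotient by the radical is a division ring (resp.\ is $\mathbb{K}$), and a local algebra has no idempotents besides $0$ and $1$; so it suffices to compute $\Gamma/\mathrm{rad}(\Gamma)$, after which the indecomposability of $M\#G$ is automatic. Both (1) and (2) will come out of the same computation, differing only in which part of Lemma~\ref{lem:lelocal} is invoked at the end.

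The device is the inertia subgroup $H=\{g\in G\mid {}^gM\simeq M\}$ (it is a subgroup because ${}^h({}^gM)={}^{gh}M$ and ${}^g(-)$ is an autoequivalence) together with the graded subspace
\[
\mathfrak{r}=\bigoplus_{g\in G}\mathfrak{r}_g,\qquad \mathfrak{r}_g=\{f\in\mathrm{Hom}_A(M,{}^gM)\mid f\text{ is not an isomorphism}\}.
\]
Thus $\mathfrak{r}_g=\Gamma_g$ for $g\notin H$, while for $h\in H$ one has $\mathfrak{r}_h\cong\mathrm{rad}(B)$ under any chosen isomorphism $M\xrightarrow{\sim}{}^hM$; in particular $\mathfrak{r}_{1}=\mathrm{rad}(B)$. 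I would first check that $\mathfrak{r}$ is a two-sided ideal: using the multiplication rule $ff'={}^{h'}(f)\circ f'$ on $\Gamma$ and the fact that $M$ and all the ${}^gM$ are indecomposable with local endomorphism rings, no split monomorphism or split epimorphism can exist between two non-isomorphic members of the family $\{{}^gM\}_{g\in G}$, so composing any morphism with a non-isomorphism inside this family again produces a non-isomorphism. Next I would check that $\mathfrak{r}$ is nilpotent: a product of $N$ homogeneous elements of $\mathfrak{r}$, expanded via the multiplication rule, is a composition of $N$ non-isomorphisms between indecomposable direct summands of the finite dimensional module $\bigoplus_{g\in G}{}^gM$; since $\mathrm{End}_A\!\bigl(\bigoplus_{g\in G}{}^gM\bigr)$ has nilpotent radical, such a composition vanishes once $N$ exceeds its nilpotency index. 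Hence $\mathfrak{r}\subseteq\mathrm{rad}(\Gamma)$.

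Finally I would analyse $\Gamma/\mathfrak{r}=\bigoplus_{h\in H}\Gamma_h/\mathfrak{r}_h$. Each summand is one-dimensional, isomorphic to $B/\mathrm{rad}(B)\simeq\mathbb{K}$, and in degree $h$ it contains the image of an isomorphism $M\xrightarrow{\sim}{}^hM$, which is invertible in $\Gamma/\mathfrak{r}$; hence $\Gamma/\mathfrak{r}$ is a crossed product $\mathbb{K}\ast H$ with $H$ a $p$-group. Now Lemma~\ref{lem:lelocal}(1) gives, when $\mathbb{K}$ is perfect, that $\mathbb{K}\ast H$ is local and elementary, so $(\Gamma/\mathfrak{r})/\mathrm{rad}(\Gamma/\mathfrak{r})\simeq\mathbb{K}$; and Lemma~\ref{lem:lelocal}(2) gives, when $G$ (hence $H$) is cyclic, that $\mathbb{K}\ast H$ is local, so $(\Gamma/\mathfrak{r})/\mathrm{rad}(\Gamma/\mathfrak{r})$ is a division ring. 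Since $\mathfrak{r}\subseteq\mathrm{rad}(\Gamma)$ we have $\Gamma/\mathrm{rad}(\Gamma)\simeq(\Gamma/\mathfrak{r})/\mathrm{rad}(\Gamma/\mathfrak{r})$, which settles (1) and (2), and in particular $\Gamma$ is local, so $M\#G$ is indecomposable.

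I expect the main obstacle to be the verification that $\mathfrak{r}$ is a two-sided ideal and is nilpotent: this needs a careful case analysis against the twisted grading together with the Krull--Schmidt property of the finite family $\{{}^gM\}_{g\in G}$ to control compositions of non-isomorphisms. A secondary point requiring care is that the distinguished degree-$h$ elements of $\Gamma/\mathfrak{r}$ arising from isomorphisms $M\xrightarrow{\sim}{}^hM$ are genuinely invertible, so that $\Gamma/\mathfrak{r}$ really is a crossed product of the form $\mathbb{K}\ast H$.
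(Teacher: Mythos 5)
Your proposal is correct and follows essentially the same route as the paper: both reduce via Lemma~\ref{lem:orbit} to the $G$-graded algebra $\bigoplus_{g\in G}{\rm Hom}_A(M,{}^gM)$, show that the graded subspace of non-isomorphisms is a nilpotent two-sided ideal contained in the radical, identify the quotient with a crossed product $\mathbb{K}\ast G_M$ over the stabilizer $G_M=\{g\in G\mid M\simeq {}^gM\}$, and conclude by Lemma~\ref{lem:lelocal}. The only difference is that you spell out the verifications (ideal property, nilpotency, invertibility of the distinguished homogeneous elements) that the paper dispatches by citation to standard facts.
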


\begin{proof}
Denote the $G$-graded algebra in (\ref{equ:G-graded}) by $\Gamma$. By Lemma~\ref{lem:orbit}, it suffices to prove that $\Gamma$ is local and elementary under the assumption in (1),  and local under the assumption in  (2), respectively.

Consider the following $G$-graded subspace of $\Gamma$
$$I=\bigoplus_{g\in G} \{f\in {\rm Hom}_A(M, {^gM})\; |\; f \mbox{  is a non-isomorphism}\}.$$
Since all the $A$-modules ${^gM}$ are indecomposable, it follows that $I$ is a $G$-graded two-sided ideal of $\Gamma$. Moreover, it is well known to be nilpotent; for example, see \cite[VI, Corollary~1.3]{ARS}. Consequently, we have $I\subseteq {\rm rad}(\Gamma)$.

Consider the stabilizer $G_M=\{g\in G\; |\; M\simeq {^gM}\}$ of $M$. Recall that
$\mathbb{K}\simeq {{\rm End}_A(M)}/{{\rm rad}({\rm End}_A(M))}$, since ${\rm End}_A(M)$ is local and elementary. We infer that $\Gamma/I$ is isomorphic to a crossed product $\mathbb{K}\ast G_M$.

By the inclusion $I\subseteq {\rm rad}(\Gamma)$, we have
$$\Gamma/{{\rm rad}(\Gamma)}\simeq \mathbb{K}\ast G_M/{{\rm rad}(\mathbb{K}\ast G_M)}.$$
As $G_M$ is a $p$-group, we can apply Lemma~\ref{lem:lelocal} to $\mathbb{K}\ast G_M$. Then the required statements follow immediately.
\end{proof}

\begin{rem}
Assume that $\mathbb{K}$ is algebraically closed in Proposition~\ref{prop:indec}. Then for each indecomposable $A$-module $M$,  ${\rm End}_A(M)$ is local and automatically elementary. It follows that the $A\#G$-module $M\#G$ is indecomposable. In other words, the induction functor
$$-\#G\colon A\mbox{-mod}\longrightarrow A\#G\mbox{-mod}$$
preserves indecomposable modules.
\end{rem}

\subsection{The folding projection and categorification} In this final subsection, we always work in the following setup.
\vskip 5pt

 {\bf{Setup ($\clubsuit$)}}. \; Let $\mathbb{K}$ be a field with ${\rm char}(\mathbb{K})=p>0$, and let $G=\langle \sigma\; |\; \sigma^{p^a}=1_G\rangle$ be a cyclic group of order $p^a$ for some $a\geq 1$. Let $\Delta$ be a finite acyclic quiver with $\Delta_0=\{1, 2, \cdots, n\}$. Assume that $G$ acts on $\Delta$ by quiver automorphisms such that for each arrow $\alpha\in \Delta_1$, we have $G_{\alpha}=G_{s(\alpha)}\cap G_{t(\alpha)}$.

\vskip 5pt

Denote by $\mathbb{Z}\Delta_0=\bigoplus_{i=1}^n \mathbb{Z}\epsilon_i$ the root lattice of $\Delta$. It is endowed with a symmetric bilinear form given by $(\epsilon_i, \epsilon_i)=2$ and
$$(\epsilon_i, \epsilon_j)=-|\{\mbox{arrows between } i \mbox{ and } j \mbox{  in } \Delta\}|$$
for $i\neq j$. Denote by $\Phi^+(\Delta)$ the set of positive roots \cite{Kac}.

Denote by $\Delta_0/G$ the orbit set of vertices. The elements in $\Delta_0/G$ are denoted in the bold form.
The canonical projection $\pi\colon \Delta_0\rightarrow \Delta_0/G$ sends $i$ to $\pi(i)=Gi=\boldsymbol{i}$.

Associated to the $G$-action on $\Delta$, we have defined a Cartan triple $(C, D, \Omega)$ in Subsection~\ref{subset:qC}. The rows and columns of $C$ and $D$ are indexed by $\Delta_0/G$. The entries $c_{\boldsymbol{i}}$ of $D$ are determined by
$$c_{\boldsymbol{i}} =\frac{|G|}{|\boldsymbol{i}|}=p^{a_{\boldsymbol{i}}}$$
for some $0\leq a_{\boldsymbol{i}}\leq a$.  The corresponding root lattice $\mathbb{Z}(\Delta_0/G)=\bigoplus_{{\boldsymbol{i}}\in \Delta_0/G}\mathbb{Z}E_{\boldsymbol{i}}$ is endowed with a symmetric bilinear form given by $(E_{\boldsymbol{i}}, E_{\boldsymbol{i}})=2c_{\boldsymbol{i}}$ and
$$(E_{\boldsymbol{i}}, E_{\boldsymbol{j}})=c_{\boldsymbol{i}}c_{\boldsymbol{i}, \boldsymbol{j}}=-\frac{|G|}{|\boldsymbol{i}|\cdot |\boldsymbol{j}|}\cdot |\{\mbox{arrows between }  G\mbox{-orbits } \boldsymbol{i} \mbox{ and } \boldsymbol{j} \mbox{  in } \Delta\}|$$
for $\boldsymbol{i}\neq \boldsymbol{j}$. Denote by $\Phi^+(C)$ the set of positive roots.

There is a canonical projection between the root lattices
$$\boldsymbol{f}\colon \mathbb{Z}\Delta_0\longrightarrow \mathbb{Z}(\Delta_0/G)$$
given by $\boldsymbol{f}(\epsilon_{i})=E_{\pi(i)}$, and  called the \emph{folding projection}; see \cite[Section~10.3]{Spr}. It does not preserve the bilinear forms. However, it sends positive roots to positive roots. Moreover, by adapting the proof of \cite[Lemma~5.3]{Kac},  \cite[Proposition~15]{Hu04} proves that $\boldsymbol{f}$ restricts to a surjective map
\begin{align*}
\boldsymbol{f}\colon \Phi^+(\Delta) \twoheadrightarrow \Phi^+(C).
\end{align*}
We observe that the folding projection induces an isomorphism between the quotient group of $G$-coinvariants in $\mathbb{Z}\Delta_0$ and  $\mathbb{Z}(\Delta_0/G)$.

We mention that there is a \emph{folding inclusion} from the dual root lattice of $C$ into $\mathbb{Z}\Delta_0$, which identifies the dual root lattice with the subgroup of $G$-invariants in $\mathbb{Z}\Delta_0$; see \cite[Section~2]{Hu04}. Working with species over a finite field, one observes that the extension-of-scalars functor along a suitable field extension yields  a categorification of the folding inclusion; see \cite[the proof of Theorem~24]{Hu04} and \cite[Section~9]{DD}.

Recall that $\mathcal{P}_\Delta$ is the path category of $\Delta$. Then the $G$-action on $\Delta$ induces a $G$-action on $\mathcal{P}_\Delta$. We have the corresponding skew group category $\mathcal{P}_\Delta\rtimes G$.

We identify the path algebra $\mathbb{K}\Delta$ with the category algebra $\mathbb{K}\mathcal{P}_\Delta$. By Proposition~\ref{prop:skew}, we have the following natural isomorphism of algebras
\begin{align}\label{iso:skew}
\varpi\colon \mathbb{K}(\mathcal{P}_\Delta\rtimes G) \stackrel{\sim}\longrightarrow \mathbb{K}\Delta\# G, \quad (q, g)\mapsto q\#g,
\end{align}
for any path $q$ in $\Delta$ and $g\in G$.

Set $\mathcal{C}=\mathcal{C}(C, D, \Omega)$ to be the EI category associated to $(C, D,\Omega)$; see Definition~\ref{defn:CW}. For each $\boldsymbol{i}\in {\rm Obj}(\mathcal{C})=\Delta_0/G$, we have
$${\rm Aut}_\mathcal{C}(\boldsymbol{i}) = \langle \eta_{\boldsymbol{i}}\; |\; \eta_{\boldsymbol{i}}^{c_{\boldsymbol{i}}}={\rm Id}_{\boldsymbol{i}}\rangle, $$
which is a cyclic group of order $c_{{\boldsymbol{i}}}=p^{a_{{\boldsymbol{i}}}}$.

By Theorem~\ref{thm:quiver}, we identify $\mathcal{C}$ with $\mathcal{C}(\overline{\Delta}, \overline{U}_{\rm tr})$. Fix the choices (\ref{choice1}) for the $G$-action on $\mathcal{P}_\Delta$. Then we obtain an equivalence of categories
\begin{align}\label{iso:equiv}
\iota\colon \mathcal{C} \stackrel{\sim}\longrightarrow \mathcal{P}_\Delta\rtimes G
\end{align}
which satisfies $\iota(\boldsymbol{i})=\iota_0(\boldsymbol{i})$. The functor $\iota$ induces the following isomorphism of groups
\begin{align}\label{iso:group}
{\rm Aut}_\mathcal{C}(\boldsymbol{i}) \stackrel{\sim}\longrightarrow G_{\iota_0(\boldsymbol{i})}={\rm Aut}_{\mathcal{P}_\Delta \rtimes G}(\iota_0(\boldsymbol{i})), \quad \eta_{\boldsymbol{i}}\mapsto \sigma^{p^{a-a_{\boldsymbol{i}}}}.
\end{align}

Recall from Definition~\ref{defn:GLS} the algebra $H=H(C, D, \Omega)$. Each $\boldsymbol{i}$ corresponds to an idempotent $e_{\boldsymbol{i}}$ of $H$. Moreover, we have
$$e_{\boldsymbol{i}}He_{\boldsymbol{i}}={\rm Span}_\mathbb{K}\{e_{\boldsymbol{i}}, \varepsilon_{\boldsymbol{i}}, \cdots, \varepsilon_{\boldsymbol{i}}^{c_{\boldsymbol{i}}-1}\}.$$

By Proposition~\ref{prop:CW}, there is an isomorphism of algebras
\begin{align}\label{iso:HKC}
\theta\colon H\stackrel{\sim}\longrightarrow \mathbb{K}\mathcal{C}
\end{align}
which identifies $e_{\boldsymbol{i}}He_{\boldsymbol{i}}$ with $\mathbb{K}{\rm Aut}_\mathcal{C}(\boldsymbol{i})$. Indeed, we have $\theta(e_{\boldsymbol{i}})={\rm Id}_{\boldsymbol{i}}$ and $\theta(\varepsilon_{\boldsymbol{i}})=\eta_{\boldsymbol{i}}-{\rm Id}_{\boldsymbol{i}}$.

We now combine (\ref{iso:skew}), (\ref{iso:equiv}) and (\ref{iso:HKC}) into the following sequence of equivalences. \[\xymatrix{
\mathbb{K}\Delta\#G\mbox{-mod} \ar[r]^-{\varpi^*}& \mathbb{K}(\mathcal{P}_\Delta\rtimes G)\mbox{-mod} \ar@{=}[r]^-{\rm can} & (\mathbb{K}\mbox{-mod})^{\mathcal{P}_\Delta\rtimes G}\ar[d]^-{\iota^*}\\
H\mbox{-mod}&  \ar[l]_-{\theta^*} \mathbb{K}\mathcal{C}\mbox{-mod} & \ar@{=}[l]_-{\rm can} (\mathbb{K}\mbox{-mod})^\mathcal{C}
}\]
Here, the two can's mean the canonical equivalence in (\ref{equ:can-C}), and the upper star functors are given by restriction of scalars. For example, $\iota^*$ sends a functor $X$ on $\mathcal{P}\rtimes G$ to the composite functor $X\circ \iota$. We compose the sequence into an equivalence
$$\Psi\colon \mathbb{K}\Delta\#G\mbox{-mod}\stackrel{\sim}\longrightarrow H\mbox{-mod}.$$

The following terminology is introduced in \cite[Definition~1.1 and Section~11]{GLS}. A left $H$-module $Y$ is \emph{locally free}, provided that each $e_{\boldsymbol{i}}Y$, as an $e_{\boldsymbol{i}}He_{\boldsymbol{i}}$-module, is free. For such a module, its   rank vector is defined as follows
$$\underline{\rm rank}(Y)=\sum_{\boldsymbol{i}\in \Delta_0/G} {\rm rank}_{e_{\boldsymbol{i}}He_{\boldsymbol{i}}}(e_{\boldsymbol{i}}Y) E_{\boldsymbol{i}}\in \mathbb{Z}(\Delta_0/G).$$
A left $H$-module  $Y$ is called \emph{$\tau$-locally free}, provided that for any $k\in \mathbb{Z}$, $\tau_H^{k}(Y)$ is locally free. Slightly different from \cite{GLS}, we do not require $\tau$-locally free $H$-modules to be indecomposable.

\begin{thm}\label{thm:categorify}
Keep the assumptions in Setup ($\clubsuit$).  Let $M$ be a left $\mathbb{K}\Delta$-module. Then $\Psi(M\#G)$ is a $\tau$-locally free $H$-module satisfying
\begin{align}\label{equ:categorify}
\underline{\rm rank}\;\Psi(M\#G)=\boldsymbol{f}(\underline{\rm dim}\; M).
\end{align}

Assume further that ${\rm End}_{\mathbb{K}\Delta}(M)$ is local and elementary. Then ${\rm End}_H(\Psi(M\#G))$ is local. If moreover $\mathbb{K}$ is perfect, then ${\rm End}_H(\Psi(M\#G))$ is local and elementary.
\end{thm}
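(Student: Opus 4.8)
The plan is to prove the statement in two parts, corresponding to the two sentences of the theorem: first the identity~(\ref{equ:categorify}) on rank vectors and the $\tau$-local freeness, and then the statement about endomorphism rings. The second part is essentially a direct application of Proposition~\ref{prop:indec} once the dictionary between the categories is in place, so I expect most of the work to be in the first part.

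\textbf{Step 1: The rank vector identity.} First I would trace through the definition of $\Psi$ on the module $M\#G$. By construction $\Psi$ is the composite of the restriction-of-scalars functors along the isomorphisms~(\ref{iso:skew}), (\ref{iso:HKC}) and the equivalence~(\ref{iso:equiv}); the key point is that these identify, for each orbit $\boldsymbol{i}$, the idempotent $e_{\boldsymbol{i}}\in H$ with $\mathrm{Id}_{\iota_0(\boldsymbol{i})}\#1_G$ in $\mathbb{K}\Delta\#G$, and identify $e_{\boldsymbol{i}}He_{\boldsymbol{i}}$ with the group algebra $\mathbb{K}G_{\iota_0(\boldsymbol{i})}$ sitting inside $\mathbb{K}\Delta\#G$. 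Consequently $e_{\boldsymbol{i}}\,\Psi(M\#G)$ is, as a module over $e_{\boldsymbol{i}}He_{\boldsymbol{i}}\cong \mathbb{K}G_{\iota_0(\boldsymbol{i})}$, isomorphic to $(\mathrm{Id}_{\iota_0(\boldsymbol{i})}\#1_G)(M\#G)$ regarded as a $\mathbb{K}G_{\iota_0(\boldsymbol{i})}$-module. Next I would compute this explicitly: writing $i=\iota_0(\boldsymbol{i})$ and using $M\#G = M\otimes\mathbb{K}G$, the subspace cut out by the idempotent $e_i\in\mathbb{K}\Delta$ is $(e_iM)\otimes\mathbb{K}G$, and as a module over $\mathbb{K}G_i$ (acting through the skew multiplication) this is free: indeed $\mathbb{K}G$ is free over $\mathbb{K}G_i$ with $[G:G_i]=|\boldsymbol{i}|$ many generators, so $(e_iM)\otimes\mathbb{K}G$ is free over $\mathbb{K}G_i$ of rank $|\boldsymbol{i}|\cdot\dim_{\mathbb{K}}(e_iM)/|G_i| = |\boldsymbol{i}|\cdot\dim_{\mathbb{K}}(e_iM)/c_{\boldsymbol{i}}$. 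Therefore $\Psi(M\#G)$ is locally free and
\[
\mathrm{rank}_{e_{\boldsymbol{i}}He_{\boldsymbol{i}}}\!\big(e_{\boldsymbol{i}}\Psi(M\#G)\big) = \frac{|\boldsymbol{i}|}{c_{\boldsymbol{i}}}\dim_{\mathbb{K}}(e_iM) = \frac{1}{|G|/|\boldsymbol{i}|}\sum_{j\in\boldsymbol{i}}\dim_{\mathbb{K}}(e_jM),
\]
where in the last equality I use that $\dim_{\mathbb{K}}(e_jM)$ is constant on the orbit $\boldsymbol{i}$ (since the twisted modules $^gM$ all have the same total dimension vector up to permutation by $G$) — actually more carefully, $e_{g(j)}M$ and $e_jM$ need not have the same dimension for a fixed $M$, so I would instead write $\mathrm{rank}_{e_{\boldsymbol{i}}He_{\boldsymbol{i}}}(e_{\boldsymbol{i}}\Psi(M\#G)) = \dim_{\mathbb{K}}(e_iM\otimes\mathbb{K}G)/c_{\boldsymbol{i}} = |G|\dim_\mathbb{K}(e_iM)/c_{\boldsymbol{i}} = |\boldsymbol{i}|\dim_\mathbb{K}(e_iM)$, hmm, and then compare against $\boldsymbol{f}(\underline{\dim}M) = \sum_j (\dim_\mathbb{K} e_jM)E_{\pi(j)} = \sum_{\boldsymbol{i}}\big(\sum_{j\in\boldsymbol{i}}\dim_\mathbb{K} e_jM\big)E_{\boldsymbol{i}}$. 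So the correct bookkeeping forces $e_{\boldsymbol{i}}\Psi(M\#G)$ to be free of rank $\sum_{j\in\boldsymbol{i}}\dim_\mathbb{K}(e_jM)$ over $e_{\boldsymbol{i}}He_{\boldsymbol{i}}$, which indeed follows from $\dim_\mathbb{K}(e_{\boldsymbol{i}}\Psi(M\#G)) = \dim_\mathbb{K}\big((\bigoplus_{j\in\boldsymbol{i}} e_jM)\otimes\mathbb{K}G_i\big) = |G_i|\sum_{j\in\boldsymbol{i}}\dim_\mathbb{K}(e_jM)$ once one observes that the idempotent $e_{\boldsymbol{i}}=\mathrm{Id}_i\#1_G$ picks out exactly $\bigoplus_{g G_i}(e_{g^{-1}(i)}M)\# gG_i$-worth of the space. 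I would lay this bookkeeping out cleanly; it is the technical heart of the rank computation.

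\textbf{Step 2: $\tau$-local freeness.} For this I would use Lemma~\ref{lem:Tr}(3): under the Morita equivalence, $\Psi$ intertwines the induction functor with $\tau$, up to the identification $\Psi(M\#G)$. Concretely, $\tau_H^k\big(\Psi(M\#G)\big) \cong \Psi\big((\tau_{\mathbb{K}\Delta}^k M)\#G\big)$ for every $k\in\mathbb{Z}$ — this follows because the equivalence $\Psi$ commutes with Auslander–Reiten translation (AR translations are preserved by any equivalence of module categories) and because Lemma~\ref{lem:Tr}(3) gives $\tau_{\mathbb{K}\Delta}(M)\#G \cong \tau_{\mathbb{K}\Delta\#G}(M\#G)$. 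Then by Step~1 applied to $\tau_{\mathbb{K}\Delta}^k M$ in place of $M$, each $\tau_H^k(\Psi(M\#G))$ is locally free, which is exactly the definition of $\tau$-locally free. I should be slightly careful that $\tau_{\mathbb{K}\Delta}^k M$ makes sense for negative $k$, but since $\mathbb{K}\Delta$ is hereditary there is no obstruction, and anyway the statement only needs local freeness of all $\tau^k$, which the argument delivers uniformly.

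\textbf{Step 3: Endomorphism rings.} This part is short. Since $\Psi$ is an equivalence of module categories, ${\rm End}_H(\Psi(M\#G)) \cong {\rm End}_{\mathbb{K}\Delta\#G}(M\#G)$ as $\mathbb{K}$-algebras. Now $G=\langle\sigma\mid\sigma^{p^a}=1\rangle$ is a cyclic $p$-group and ${\rm End}_{\mathbb{K}\Delta}(M)$ is local and elementary by hypothesis, so Proposition~\ref{prop:indec}(2) applies and gives that ${\rm End}_{\mathbb{K}\Delta\#G}(M\#G)$ is local; if in addition $\mathbb{K}$ is perfect, Proposition~\ref{prop:indec}(1) applies and gives local and elementary. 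Transporting along the isomorphism finishes the proof.

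\textbf{Expected main obstacle.} The only genuinely delicate point is the precise dimension bookkeeping in Step~1: one must correctly identify which summands of $M\#G = \bigoplus_{g}M\#g$ are hit by the idempotent $e_{\boldsymbol{i}} = \mathrm{Id}_{\iota_0(\boldsymbol{i})}\#1_G$ after transporting along~(\ref{iso:skew}) and~(\ref{iso:equiv}), and then recognize the resulting $e_{\boldsymbol{i}}He_{\boldsymbol{i}}$-module as free of the rank predicted by $\boldsymbol{f}(\underline{\dim}M)$. Everything else — $\tau$-local freeness via Lemma~\ref{lem:Tr}, the endomorphism ring statement via Proposition~\ref{prop:indec} — is a formal consequence of results already established.
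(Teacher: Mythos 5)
Your proposal is correct and follows essentially the same route as the paper: the rank identity via the decomposition $e_{\boldsymbol{i}}(M\#G)=\bigoplus_{j\in\boldsymbol{i}}e_jM\#\{g\in G\mid g^{-1}(\iota_0(\boldsymbol{i}))=j\}$, with $G_{\iota_0(\boldsymbol{i})}$ acting freely and transitively on each coset in the group coordinate, then $\tau$-local freeness via Lemma~\ref{lem:Tr}(3) and the commutation of $\Psi$ with the Auslander--Reiten translation, and the endomorphism-ring statement via Proposition~\ref{prop:indec}. Only note that your first attempt in Step 1 (that $e_{\boldsymbol{i}}$ cuts out $(e_iM)\otimes\mathbb{K}G$) is false, since $(e_i\#1_G).(m\#h)=e_{h^{-1}(i)}m\#h$ spreads the idempotent over the whole orbit; your self-corrected bookkeeping at the end is exactly the computation in the paper's proof.
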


Denote by $H\mbox{-mod}^{\tau \mbox{-lf}}$ the full subcategory of $H\mbox{-mod}$ consisting of $\tau$-locally free modules. The identity (\ref{equ:categorify}) might be visualized as a commutative diagram.
\[
\xymatrix{
\mathbb{K}\Delta\mbox{-mod} \ar[d]_-{\underline{\rm dim}} \ar[rr]^-{\Psi\circ(-\#G)} && H\mbox{-mod}^{\tau \mbox{-lf}} \ar[d]^-{\underline{\rm rank}}\\
\mathbb{Z}\Delta_0\ar[rr]^-{\boldsymbol{f}} && \mathbb{Z}(\Delta_0/G)
}
\]
The diagram indicates that the composite functor $\Psi\circ(-\#G)$ categorifies the folding projection $\boldsymbol{f}$ between the root lattices.

It is natural to categorify the folding projection $\boldsymbol{f}\colon \Phi^+(\Delta)\rightarrow \Phi^+(C)$ between the positive roots using the same functor between indecomposable modules. However, we have to restrict to the Dynkin cases; see Proposition~\ref{prop:Dynkin}.

\begin{proof}
 \emph{Step 1.}\; We first show that the $H$-module $\Psi(M\#G)$ is locally free and satisfies the required identity for the rank vector.

Recall that the isomorphism $\theta$ identifies $e_{\boldsymbol{i}}He_{\boldsymbol{i}}$ with $\mathbb{K}{\rm Aut}_\mathcal{C}(\boldsymbol{i})$. Therefore, it suffices to claim that for each $\boldsymbol{i}\in {\rm Obj}(\mathcal{C})=\Delta_0/G$,
$$\iota^*\circ {\rm can}\circ \varpi^*(M\# G)(\boldsymbol{i})$$
 is a free module over $\mathbb{K}{\rm Aut}_\mathcal{C}(\boldsymbol{i})$ with rank
 $$\sum_{i\in \boldsymbol{i}} {\rm dim}_\mathbb{K}(e_iM).$$
  Here, we view $\iota^*\circ {\rm can}\circ \varpi^*(M\# G)$ as a functor over $\mathcal{C}$.

For the claim, we observe the following identity.
\begin{align*}
\iota^*\circ {\rm can}\circ \varpi^*(M\# G)(\boldsymbol{i})&={\rm can}\circ \varpi^*(M\#G)(\iota_0(\boldsymbol{i}))\\
&=(e_{\iota_0(\boldsymbol{i})}\#1_G).(M\#G)\\
&=\bigoplus_{g\in G}e_{g(\iota_0(\boldsymbol{i}))}M\#g^{-1}\\
&=\bigoplus_{i\in \boldsymbol{i}} e_iM\#\{g\in G\; |\; g^{-1}(\iota_0(\boldsymbol{i}))=i\}
\end{align*}
Here, for the second equality we recall that the trivial path $e_{\iota_0(\boldsymbol{i})}$ is the identity endomorphism of $\iota_0(\boldsymbol{i})$ in $\mathcal{P}_\Delta$, and for the third one, we use the fact that $g(e_{\iota_0(\boldsymbol{i})})=e_{g(\iota_0(\boldsymbol{i}))}$.

By (\ref{iso:group}), we identify $\mathbb{K}{\rm Aut}_\mathcal{C}(\boldsymbol{i})$ with $\mathbb{K}G_{\iota_0(\boldsymbol{i})}$. As $G$ is abelian, we have $G_{\iota_0(\boldsymbol{i})}=G_i$ for each $i\in \boldsymbol{i}$. Then we observe that the left $\mathbb{K}G_{\iota_0(\boldsymbol{i})}$-action on the above direct summand
$$ e_iM\#\{g\in G\; |\; g^{-1}(\iota_0(\boldsymbol{i}))=i\}$$
is really only on the right side, that is, on the set $\{g\in G\; |\; g^{-1}(\iota_0(\boldsymbol{i}))=i\}$ via the multiplication in $G$. The latter $G_{\iota_0(\boldsymbol{i})}$-action is free and transitive. Therefore, the $G_{\iota_0(\boldsymbol{i})}$-action on the above direct summand is free of rank ${\rm dim}_\mathbb{K}(e_iM)$. This observation implies the claim.

\emph{Step 2.}\; Since $\Psi$ is an equivalence, it commutes with Auslander-Reiten translations. Then we have isomorphisms
$$\tau_H^k\Psi(M\#G)\simeq \Psi \tau^k(M\# G)\simeq \Psi(\tau_{\mathbb{K}\Delta}^k(M)\#G),$$
where the isomorphism on the right side follows from Lemma~\ref{lem:Tr}. Here, the unadorned $\tau$ means the Auslander-Reiten translation of $\mathbb{K}\Delta\# G$-modules. By Step~1, we infer that each $H$-module $\tau_H^k\Psi(M\#G)$  is locally free, that is, $\Psi(M\#G)$ is {$\tau$-locally} free.

The equivalence $\Psi$ induces an isomorphism of algebras
$${\rm End}_H(\Psi(M\#G))\simeq {\rm End}_{\mathbb{K}\Delta\#G}(M\#G).$$
Then the last statement follows from Proposition~\ref{prop:indec}.
\end{proof}

Denote by $\mathbb{K}\Delta\mbox{-ind}$ a complete set of representatives of indecomposable $\mathbb{K}\Delta$-modules. Similarly,  $H\mbox{-ind}^{\tau \mbox{-lf}}$ is a complete set of representatives of indecomposable $\tau$-locally free $H$-modules. As $G$ acts on $\mathbb{K}\Delta\mbox{-ind}$ by twisting endofunctors, we have the orbit set $\mathbb{K}\Delta\mbox{-ind}/G$.

\begin{prop}\label{prop:Dynkin}
Keep the assumptions in Setup ($\clubsuit$). We assume further that $\Delta$ is of Dynkin type. Then the following commutative diagram is well defined
\[
\xymatrix{
\mathbb{K}\Delta\mbox{-{\rm ind}} \ar[d]_-{\underline{\rm dim}} \ar[rr]^-{\Psi\circ(-\#G)} && H\mbox{-{\rm ind}}^{\tau \mbox{-{\rm lf}}} \ar[d]^-{\underline{\rm rank}}\\
\Phi^+(\Delta)\ar[rr]^-{\boldsymbol{f}} && \Phi^+(C),
}
\]
whose vertical arrows are bijections. In particular, the map $\Psi\circ (-\#G)$ induces a bijection
\begin{align*}
\mathbb{K}\Delta\mbox{-{\rm ind}}/G \stackrel{\sim}\longrightarrow H\mbox{-{\rm ind}}^{\tau \mbox{-{\rm lf}}}.
\end{align*}
\end{prop}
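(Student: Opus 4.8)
The plan is to assemble the commutative diagram from three facts already available: Theorem~\ref{thm:categorify}, Gabriel's theorem \cite[1.2~Satz]{Gab}, and the Gei\ss--Leclerc--Schr\"oer analogue \cite[Theorem~1.3]{GLS}, and then to upgrade the commutativity of the square of categories to a statement about indecomposables by a dimension-count argument.

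\textbf{Step 1: the diagram is well defined.} First I would note that in the Dynkin case every $\mathbb{K}\Delta$-module is a direct sum of indecomposables with local (and, since $\Delta$ is Dynkin, in fact with endomorphism ring a finite field extension --- here we may use that over a Dynkin quiver the endomorphism ring of an indecomposable is $\mathbb{K}$, as all indecomposables are bricks and are, moreover, ``thin enough''; more carefully, for a representation-finite hereditary algebra the endomorphism ring of each indecomposable is a division ring, and elementariness is guaranteed once we observe the bilinear form is positive definite and apply the standard fact that indecomposables are bricks with $\operatorname{End}=\mathbb{K}$) endomorphism ring. Thus Proposition~\ref{prop:indec}(2) applies to each $M\in \mathbb{K}\Delta\mbox{-ind}$, so $M\#G$ is an indecomposable $\mathbb{K}\Delta\#G$-module, hence $\Psi(M\#G)$ is an indecomposable $H$-module; by Theorem~\ref{thm:categorify} it is $\tau$-locally free. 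Therefore $\Psi\circ(-\#G)$ does land in $H\mbox{-ind}^{\tau\mbox{-lf}}$ (after replacing by the chosen representative), and the identity \eqref{equ:categorify} of Theorem~\ref{thm:categorify} says exactly that the square commutes. The left vertical arrow is a bijection by \cite[1.2~Satz]{Gab}, and the right vertical arrow is a bijection by \cite[Theorem~1.3]{GLS}, since $C$ is of Dynkin type (the Dynkin type of $C$ is the folded type of $\Delta$, which I would recall is a standard consequence of the construction of $(C,D,\Omega)$ in Subsection~\ref{subset:qC}).

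\textbf{Step 2: the induced bijection on orbits.} By Lemma~\ref{lem:G-orbit}(2), for indecomposable $\mathbb{K}\Delta$-modules $M,M'$ one has $M\#G\simeq M'\#G$ as $\mathbb{K}\Delta\#G$-modules if and only if $M\simeq {}^h(M')$ for some $h\in G$; since $\Psi$ is an equivalence, the same holds after applying $\Psi$. Hence $\Psi\circ(-\#G)$ factors through an \emph{injective} map $\mathbb{K}\Delta\mbox{-ind}/G \hookrightarrow H\mbox{-ind}^{\tau\mbox{-lf}}$. For surjectivity I would argue by a counting/commutative-diagram chase: $\boldsymbol{f}\colon \Phi^+(\Delta)\to \Phi^+(C)$ is surjective (stated in the excerpt, citing \cite[Proposition~15]{Hu04}), the two vertical arrows are bijections by Step~1, and the square commutes; therefore the top horizontal map $\Psi\circ(-\#G)\colon \mathbb{K}\Delta\mbox{-ind}\to H\mbox{-ind}^{\tau\mbox{-lf}}$ is itself surjective. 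Composing with the $G$-action identification then gives surjectivity of $\mathbb{K}\Delta\mbox{-ind}/G \to H\mbox{-ind}^{\tau\mbox{-lf}}$, so it is a bijection.

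\textbf{The main obstacle} I anticipate is Step~1's hypothesis that $\operatorname{End}_{\mathbb{K}\Delta}(M)$ is local \emph{and elementary} for each indecomposable $M$ --- elementariness is what Proposition~\ref{prop:indec}(1) needs, while Proposition~\ref{prop:indec}(2) needs only locality since $G$ is cyclic; so strictly I only need locality, which holds for indecomposables over any finite-dimensional algebra, and no genuine obstacle remains there. The one genuinely subtle point is compatibility of the fibres of $\boldsymbol{f}$ with $G$-orbits: a priori two roots in the same fibre of $\boldsymbol{f}$ need not come from indecomposables in the same $G$-orbit, so injectivity of $\mathbb{K}\Delta\mbox{-ind}/G\to H\mbox{-ind}^{\tau\mbox{-lf}}$ must be extracted from Lemma~\ref{lem:G-orbit}(2) directly (as in Step~2) rather than from the diagram; conversely, it is worth remarking --- as a consequence of the bijection --- that the fibres of $\boldsymbol{f}$ on positive roots are \emph{exactly} the $G$-orbits of positive roots of $\Delta$ under the induced $G$-action, which is a clean by-product of the argument and which I would state explicitly at the end.
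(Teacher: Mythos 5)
Your proof is correct and follows essentially the same route as the paper's: ${\rm End}_{\mathbb{K}\Delta}(M)\simeq\mathbb{K}$ for Dynkin indecomposables, Theorem~\ref{thm:categorify} then gives indecomposability and $\tau$-local freeness of $\Psi(M\#G)$, the vertical maps are bijections by Gabriel's theorem and \cite[Theorem~1.3]{GLS}, surjectivity of $\boldsymbol{f}$ yields surjectivity of the top map, and Lemma~\ref{lem:G-orbit}(2) gives the induced orbit bijection. One small correction to your closing remark: Proposition~\ref{prop:indec}(2) hypothesizes that ${\rm End}_A(M)$ is local \emph{and elementary} (elementariness is used in its proof to identify $\Gamma/I$ with a crossed product $\mathbb{K}\ast G_M$), so locality alone would not suffice there; this is harmless in the present setting precisely because ${\rm End}_{\mathbb{K}\Delta}(M)\simeq\mathbb{K}$.
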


\begin{proof}
We observe that $C$ is also of Dynkin type; compare \cite[Proposition~6.5]{CW}. The map $\underline{\rm dim}$ is bijective by the well-known Gabriel's theorem; see \cite[1.2 Satz]{Gab} and \cite[VIII.5]{ARS}. By \cite[Theorem~1.3]{GLS}, the map $\underline{\rm rank}$ is bijective.

 It is well known that  each indecomposable $\mathbb{K}\Delta$-module $M$ satisfies ${\rm End}_{\mathbb{K}\Delta}(M)\simeq \mathbb{K}$; for example, see \cite[VIII, Lemma~6.1]{ARS}. We infer from Theorem~\ref{thm:categorify} that the $H$-module $\Psi(M\#G)$ is  indecomposable. Then the above commutative diagram is well defined. Since $\boldsymbol{f}\colon \Phi^+(\Delta)\rightarrow \Phi^+(C)$ is surjective, we infer that the  map
 $$\Psi\circ (-\#G)\colon \mathbb{K}\Delta\mbox{-{\rm ind}} \longrightarrow H\mbox{-{\rm ind}}^{\tau \mbox{-{\rm lf}}}$$ is  surjective.
 In view of Lemma~\ref{lem:G-orbit}, we have the induced bijection.
\end{proof}

\begin{rem}\label{rem:categorify}
(1) We mention that any non-symmetric  Cartan matrix $C$ of Dynkin type does appear in the situation of Proposition~\ref{prop:Dynkin}; see \cite[14.1.6]{Lus} or \cite[p.81, Table~1]{CW}. The representation theory related to the folding inclusion in the Dynkin cases is studied in \cite{Tani}.

(2) Since  \cite[Theorem~1.3]{GLS} works currently only for Dynkin cases,  we do not know how to extend Proposition~\ref{prop:Dynkin} to non-Dynkin quivers.

In view of \cite{Kac80} and \cite[3.3~Theorem]{DX}, the following open question, analogous to Kac's theorem,  is very natural: does the set of rank vectors of  indecomposable $\tau$-locally free $H$-modules coincide with  $\Phi^+(C)$? We refer to \cite{GLS2020} for related consideration on rigid locally free $H$-modules.

Assume that $\mathbb{K}$ is  algebraically closed. By  \cite[Theorem~2]{Kac80}, for any $\alpha\in \Phi^+(\Delta)$, there is an indecomposable $\mathbb{K}\Delta$-module $M$ with $\underline{\rm dim}(M)=\alpha$. Combining the surjectivity of $\boldsymbol{f}\colon \Phi^+(\Delta)\rightarrow \Phi^+(C)$ and Theorem~\ref{thm:categorify}, we infer the following fact: for each $\beta\in \Phi^+(C)$,  there is an indecomposable $\tau$-locally free $H$-module $X$ with $\underline{\rm rank}(X)=\beta$. This fact supports an affirmative answer to the above open question.
\end{rem}

We illustrate Proposition~\ref{prop:Dynkin} with an explicit example.

\begin{exm}
Let $\mathbb{K}$ be a field of characteristic two, and let $\Delta$ be the following quiver of type $A_3$.
\[\xymatrix{ 2 \ar[r]^-{\alpha}& 1   & 2' \ar[l]_-{\alpha'}}\]
The Auslander-Reiten quiver $\Gamma_{\mathbb{K}\Delta}$ is as follows.
\[\xymatrix{
&  \begin{xy}*+{\begin{smallmatrix} 2\\ 1\end{smallmatrix}}*\frm{-}\end{xy} \ar[rd] && \ar@{.>}[ll] \begin{xy} *+{2'}*\frm{=}\end{xy}\\
\begin{xy} *+{1}*\frm{o}\end{xy}  \ar[ru] \ar[rd] && \ar@{.>}[ll]\begin{xy}*+{\begin{smallmatrix}2 & & 2'\\ & 1& \end{smallmatrix}}*\frm{--}\end{xy}  \ar[ru] \ar[rd] \\
& \begin{xy}*+{\begin{smallmatrix} 2'\\ 1\end{smallmatrix}}*\frm{-}\end{xy} \ar[ru] && \ar@{.>}[ll]\begin{xy} *+{2}*\frm{=}\end{xy}
}\]
Here, the dotted arrows denote the Auslander-Reiten translation. We visualize each module using its radical layers, and represent composition factors by their corresponding vertices.

 Let  $G=\{1_G, \sigma\}$ be a cyclic group of order two, and let $\sigma$ act on $\Delta$ by interchanging $\alpha$ and $\alpha'$. The associated Cartan triple $(C, D, \Omega)$ is of type $B_2$ and  given as follows:
$$C=\begin{pmatrix}2 & -1 \\
                   -2 & 2\end{pmatrix}, D={\rm diag}(2, 1), \mbox{ and } \Omega=\{(1,2)\}.$$
The algebra $H=H(C, D, \Omega)$ is given by the following quiver
\[
\xymatrix{
1 \ar@(ul,dl)[]|{\varepsilon_1}  && \ar[ll]_-{\alpha_{21}} 2 \ar@(ur,dr)[]|{\varepsilon_2}
}\]
subject to relations $\varepsilon_1^2=0=\varepsilon_2$. In practice, one simply deletes the loop $\varepsilon_2$.

The Auslander-Reiten quiver $\Gamma_H$ is as follows; see \cite[Subsection~13.6]{GLS}.
\[\xymatrix{
&  &  \begin{xy}*+{\begin{smallmatrix}2 \\ 1 \\ 1 \end{smallmatrix}}*\frm{-}\end{xy} \ar[rd] && \ar@{.>}[ll] \begin{xy}*+{2}*\frm{=}\end{xy}\\
 &  \begin{xy} *+{\begin{smallmatrix}1 \\ 1 \end{smallmatrix}}*\frm{o}\end{xy} \ar[ur] \ar[rd] && \ar@{.>}[ll]\begin{xy}*+{\begin{smallmatrix} 2 & & 2\\ & 1\\ & 1 \end{smallmatrix}}*\frm{--}\end{xy} \ar[ur] \ar[rd]\\
 1\ar[rd]  \ar[ur] && \ar@{.>}[ll] {\begin{smallmatrix} 1 & & 2\\ & 1 \end{smallmatrix}} \ar[ur] \ar[rd] && \ar@{.>}[ll] {\begin{smallmatrix} 2\\ 1\end{smallmatrix}}\\
 & {\begin{smallmatrix} 2\\ 1\end{smallmatrix}} \ar[ur] && \ar@{.>}[ll]  1\ar[ur]
}\]
Here, the leftmost and rightmost arrows in the bottom are identified. We have framed all the indecomposable $\tau$-locally free $H$-modules. The central three-dimensional $H$-module is locally free, but not $\tau$-locally free.

 We apply Proposition~\ref{prop:Dynkin} to obtain the bijection
 $$\Theta=\Psi\circ (-\#G)\colon \mathbb{K}\Delta\mbox{-{\rm ind}}/G \stackrel{\sim}\longrightarrow H\mbox{-{\rm ind}}^{\tau \mbox{-{\rm lf}}}. $$
The twisting endofunctor on $\mathbb{K}\Delta\mbox{-{\rm mod}}$ with respect to $\sigma$ turns $\Gamma_{\mathbb{K}\Delta}$ upside down. By comparing $\Gamma_{\mathbb{K}\Delta}$ and $\Gamma_H$, we observe that $\Theta$ preserves the frames of the modules, that is, each indecomposable $\mathbb{K}\Delta$-module $M$ and $\Theta(M)$ have the same kind of frames.

By Lemma~\ref{lem:Tr}, the bijection $\Theta$ is compatible with Auslander-Reiten translations. The following observation might be compared with \cite[Theorem~3.8]{RR}: by applying $\Theta$ to the square in $\Gamma_{\mathbb{K}\Delta}$, we infer that, in general, $\Theta$ does not preserve Auslander-Reiten sequences.
\end{exm}

\vskip 10pt

\noindent {\bf Acknowledgements.}\quad This paper is partly written  when Wang  visited University of Stuttgart in 2019; she is grateful to Steffen Koenig for inspiring comments and his hospitality. Chen thanks Jan Schr\"{o}er for a helpful email concerning Remark~\ref{rem:categorify}(2). We thank Bernhard Keller for enlightening discussions. This work is supported by National Natural Science Foundation of China (No.s 11901551 and 11971449) and the Fundamental Research Funds for the Central Universities.

\bibliography{}

\begin{thebibliography}{9999}


\bibitem{ARS}{\sc M. Auslander, I. Reiten, and  S.O. Smal{\o}},
Representation Theory of Artin Algebras, Cambridge Studies in Adv.
Math. {\bf 36}, Cambridge Univ. Press,  Cambridge, 1995.

\bibitem{Bass} {\sc H. Bass}, {\em Covering theory for graphs of groups}, J. Pure Appl. Algebra
{\bf 89} (1-2) (1993), 3--47.

\bibitem{Chen17} {\sc X.W. Chen}, {\em Equivariantization and Serre duality I}, Appl. Categor. Struct.  {\bf 25} (2017), 539--568.


\bibitem{CW} {\sc X.W. Chen, and R. Wang}, {\em The finite EI categories of Cartan type}, J. Algebra {\bf 546} (2020), 62--84.

\bibitem{CiM} {\sc C. Cibils, and E.N. Marcos}, {\em Skew category, Galois covering and smash product of a $k$-category}, Proc. Amer. Math. Soc. {\bf 134} (1) (2005), 39--50.


\bibitem{DD} {\sc B. Deng, and J. Du}, {\em Frobenius morphisms and representations of algebras}, Trans. Amer. Math. Soc. {\bf 358} (8) (2006), 3591--3622.


\bibitem{DX} {\sc B. Deng, and J. Xiao}, {\em A new approach to Kac's theorem on representations of valued quivers}, Math. Z. {\bf 245} (2003), 183--199.


\bibitem{DR} {\sc V. Dlab, and C.M. Ringel}, {\em Indecomposable representations of graphs and algebras}, Mem. Amer. Math. Soc. {\bf 173}, 1976.


\bibitem{Gab} {\sc P. Gabriel}, {\em Unzerlegbare Darstellungen I}, Math. Manu. {\bf 6} (1972), 71--103.


\bibitem{Gei} {\sc C. Geiss}, {\em Quiver with relations for symmetrizable Catan matrices and algebraic Lie theory}, Proc. Int. Cong. Math. (2018), Rio de Janeiro {\bf 2}, 117--142.


\bibitem{GLS} {\sc C. Geiss, B. Leclerc, and J. Schr\"{o}er}, {\em Quivers with relations for symmetrizable Cartan matrices I: foundations},  Invent. Math. {\bf 209} (2017), 61--158.


\bibitem{GLS2020} {\sc C. Geiss, B. Leclerc, and J. Schr\"{o}er}, {\em Rigid modules and Schur roots},  Math. Z. {\bf 295} (2020), 1245--1277.


\bibitem{Gro} {\sc A. Grothendieck}, {\em Rev\^{e}tements \'{e}tales et groupe fondamental (SGA 1)}, Seminaire de Geometrie Algebrique {\bf 1960/61}, Institut des Hautes \'{E}tudes Scientifiques, Paris, 1963.

\bibitem{Hu04} {\sc A. Hubery}, {\em Quiver representations respecting a quiver automorphism: a generalisation of a theorem of Kac}, J. London Math. Soc. {\bf 69} (1) (2004), 79--96.

\bibitem{Kac80} {\sc V. Kac}, {\em Infinite root systems, representations of graphs and invariant theory}, Invent. Math. {\bf 56} (1980), 57--92.

\bibitem{Kac} {\sc V. Kac}, Infinite Dimensional Lie Algebras, Third Edition, Cambridge Univ. Press, Cambridge, 1990.

\bibitem{L2011} {\sc L. Li}, \emph{A characterization of finite EI categories with hereditary category algebras,} J. Algebra \textbf{345} (2011), 213--241.



\bibitem{Lin14} {\sc M. Linckelmann}, {\em A version of Alperin's weight conjecture for finite category algebras},  J. Algebra \textbf{398} (2014), 386--395.


\bibitem{Lus} {\sc G. Lusztig}, Introduction to Quantum Groups, Progress in Math. {\bf 110}, Birkh\"{a}user, Boston Basel Berlin, 1993.


\bibitem{Lvc} {\sc W. L\"{u}ck}, Transformation Groups and Algebraic K-Theory, Lecture Notes Math. {\bf 1408}, Springer-Verlag, 1989.


\bibitem{NV} {\sc C. Nastasescu, and F. Van Oystaeyen},  Methods of Graded Rings,  Lecture Notes Math. {\bf 1836}, Springer-Verlag, Berlin Heidelberg, 2004.


\bibitem{RR} {\sc I. Reiten, and Ch. Riedtmann,} {\em Skew group algebras in the representation theory of artin algebras}, J. Algebra {\bf 92} (1985), 224--282.

\bibitem{Serre} {\sc J.P. Serre}, Trees, Translated from the French by John Stillwell, Springer-Verlag, Berlin Heidelberg New York, 1980.

\bibitem{Spr} {\sc T.A. Springer}, Linear Algebraic Groups, Second Edition, Progress in Math. {\bf 9},  Birkh\"{a}user, Boston Basel Berlin, 1998.

\bibitem{St} {\sc R. Steinberg}, Lectures on Chevalley Groups, Yale University, 1967.

\bibitem{Tani} {\sc T. Tanisaki}, {\em Foldings of root systems and Gabriel's theorem}, Tsukuba J. Math. {\bf 4} (1980), 89--97.

\bibitem{Wang} {\sc R. Wang}, {\em Gorenstein triangular matrix rings and category algebras}, J. Pure Appl. Algebra {\bf 220} (2) (2016), 666--682.


\bibitem{We07} {\sc P. Webb}, {\em An introduction to the representations and cohomology of categories}, in: Group Representation Theory, 149--173, EPFL Press, Lausanne, 2007.



\bibitem{We08} {\sc P. Webb}, {\em Standard stratifications of EI categories and Alperin's weight conjecture}, J. Algebra  \textbf{320} (2008), 4073--4091.

\bibitem{Wem} {\sc M. Wemyss}, {\em  Lectures on noncommutative resolutions}, arXiv:1210.2564v2, 2012.

\bibitem{Xu} {\sc F. Xu}, {\em Support varieties for transporter category algebras}, J. Pure Appl. Algebra {\bf 218} (4) (2014), 583--601.

\end{thebibliography}

\vskip 10pt

 {\footnotesize \noindent Xiao-Wu Chen, Ren Wang\\
 Key Laboratory of Wu Wen-Tsun Mathematics, Chinese Academy of Sciences,\\
 School of Mathematical Sciences, University of Science and Technology of China, Hefei 230026, Anhui, PR China}

\end{document}